\definecolor{darkpastelgreen}{rgb}{0.01, 0.75, 0.24}
\definecolor{cadmiumgreen}{rgb}{0.0, 0.42, 0.24}
\definecolor{armygreen}{rgb}{0.29, 0.33, 0.13}
\newtheorem{assumption}{Assumption}
\newtheorem{theorem}{Theorem}
\newtheorem{lemma}{Lemma}
\newtheorem{remark}{Remark}
\title{\textbf{Statistical and Computational Complexities of BFGS Quasi-Newton Method for Generalized Linear Models}
\vspace{3mm}}
\author{Qiujiang Jin\thanks{Department of Electrical and Computer Engineering, UT Austin. \{qiujiang@austin.utexas.edu\}.} \qquad Tongzheng Ren\thanks{Department of Computer Science, UT Austin. \{tongzheng@utexas.edu\}.} \qquad Nhat Ho\thanks{Department of Statistics and Data Sciences, UT Austin. \{minhnhat@utexas.edu\}.} \qquad Aryan Mokhtari\thanks{Department of Electrical and Computer Engineering, UT Austin. \{mokhtari@austin.utexas.edu\}.}}
\date{\empty}
\begin{document}

\maketitle

\begin{abstract}
The gradient descent (GD) method has been used widely to solve parameter estimation in generalized linear models (GLMs), a generalization of linear models when the link function can be non-linear. In GLMs with a polynomial link function, it has been shown that in the high signal-to-noise ratio (SNR) regime, due to the problem's strong convexity and smoothness, GD converges linearly and reaches the final desired accuracy in a logarithmic number of iterations. In contrast, in the low SNR setting, where the problem becomes locally convex, GD converges at a slower rate and requires a polynomial number of iterations to reach the desired accuracy. Even though Newton's method can be used to resolve the flat curvature of the loss functions in the low SNR case, its computational cost is prohibitive in high-dimensional settings as it is $\mathcal{O}(d^3)$, where $d$ the is the problem dimension. To address the shortcomings of GD and Newton's method, we propose the use of the BFGS quasi-Newton method to solve parameter estimation of the GLMs, which has a per iteration cost of $\mathcal{O}(d^2)$. When the SNR is low, for GLMs with a polynomial link function of degree $p$, we demonstrate that the iterates of BFGS converge linearly to the optimal solution of the population least-square loss function, and the contraction coefficient of the BFGS algorithm is comparable to that of Newton's method. Moreover, the contraction factor of the linear rate is independent of problem parameters and only depends on the degree of the link function $p$. Also, for the empirical loss with $n$ samples,  we prove that in the low SNR setting of GLMs with a polynomial link function of degree $p$, the iterates of BFGS reach a final statistical radius of $\mathcal{O}((d/n)^{\frac{1}{2p+2}})$  after at most $\log(n/d)$ iterations. This complexity is significantly less than the number required for GD, which scales polynomially with $(n/d)$. 
\end{abstract}

\section{Introduction}\label{se:introduction}

In supervised machine learning, we are  given a set of $n$ independent samples denoted by $X_1,\dots,X_n$ with corresponding labels $Y_1,\dots,Y_n$, that are drawn from some unknown distribution and our goal is to train a model that maps the feature vectors to their corresponding labels. We assume that the data is generated according to distribution $\mathcal{P}_{\theta^*}$ which is parameterized by a ground truth parameter $\theta^*$. Our goal as the learner is to find $\theta^*$ by solving the empirical risk minimization (ERM) problem defined as   
\begin{equation}
  \min_{\theta \in \mathbb{R}^{d}} \mathcal{L}_{n}(\theta) : = \frac{1}{n} \sum_{i=1}^n \ell(\theta; (X_i,Y_i)),
\end{equation}
where $\ell(\theta; (X_i,Y_i))$ is the loss function that measures the error between the predicted output of $X_i$ using parameter $\theta$ and its true label $Y_i$. If we define $\theta_n^*$ as an optimal solution of the above optimization problem, i.e., $\theta_n^* \in \arg \min_{\theta \in \mathbb{R}^{d}} \mathcal{L}_{n}(\theta)$, it can be considered as an approximation of the ground-truth solution $\theta^*$, where $\theta^*$  is also a minimizer of the population loss defined as 
\begin{equation}
      \min_{\theta \in \mathbb{R}^{d}} \mathcal{L}(\theta) : =  \mathbb{E}\left[ \ell(\theta; (X,Y))\right].
\end{equation}
If one can solve the empirical risk efficiently, the output model could be close to $\theta^*$, when  $n$ is sufficiently large. Several works have studied the complexity of iterative methods for solving ERM or directly the population loss, for the case that the objective function is convex or strongly convex with respect to $\theta$~\citep{Siva_2017, Ho_Instability, loh2015regularized, agarwal2012fast, yuan2013truncated, Raaz_Ho_Koulik_2020, hardt16, candes_2011}. However, when we move beyond linear models, the underlying loss becomes non-convex and therefore the behavior of iterative methods could substantially change, and it is not even clear if they can reach a neighborhood of a global minimizer of the ERM problem. 

The focus of this paper is on the generalized linear model (GLM)~\citep{Carroll-1997, Netrapalli_Prateek_Sanghavi_2015, Fienup_82, Shechtman_Yoav_etal_2015, GMAMS} where the labels and features are generated according to a polynomial link function and we have $Y_i=(X_i^\top \theta^*)^p+\zeta_i$, where $\zeta_i$ is an additive noise and $p\geq 2$ is an integer. Due to nonlinear structure of the generative model, even if we select a convex loss function~$\ell$, the ERM problem denoted to the considered GLM could be non-convex with respect to $\theta$. Interestingly, depending on the norm of $\theta^*$, the curvature of the ERM problem and its corresponding population risk minimization problem could change substantially. 
More precisely, in the case that $\|\theta^*\|$ is sufficiently large, which we refer to this case as the high signal-to-noise ratio (SNR) regime, the underlying population loss of the problem of interest is locally strongly convex and smooth. On the other hand, in the regime that $\|\theta^*\|$ is close to zero, denoted by the low SNR regime,  the underlying problem is neither strongly convex nor smooth, and in fact, it is ill-conditioned. 

These observations lead to the conclusion that in the high SNR setting, due to strong convexity and smoothness of the underlying problem, gradient descent (GD) reaches the desired accuracy exponentially fast and overall it only requires logarithmic number of iterations. However, in the low SNR case, as the problem becomes locally convex, GD converges at a sublinear rate and thus requires polynomial number of iterations in terms of the sample size. 

To address this issue, \citet{Polyak_GD} advocated the use of GD with the Polyak step size to improve GD's convergence in low SNR scenarios. hey demonstrated that the number of iterations could become logarithmic with respect to the sample size, when GD is deployed with the Polyak step size. However, such a method still remains a first-order algorithm and lacks any curvature approximation. Consequently, its overall complexity is directly proportional to the condition number of the problem. This, in turn, depends on both the condition number of the feature vectors' covariance and the norm $|\theta^*|$. As a result, in low SNR settings, the problem becomes ill-conditioned with a large condition number, and hence GD with the Polyak step size could be very slow.
Furthermore, the implementation of the Polyak step size necessitates access to the optimal value of the objective function. As precise estimation of this optimal objective function value may not always be feasible, any inaccuracies could potentially lead to a reduced convergence rate for GD employing the Polyak step size. 

Another alternative is to use a different distance metric instead of an adaptive step size to improve the convergence of GD for the low SNR setting. More precisely, \cite{Mirror_Descent} have shown that the mirror descent method with a proper distance-generating function can solve the population loss corresponding to the low SNR setting at a linear rate. However, the linear convergence rate of mirror descent, similar to GD with Polyak step size, also depends on the condition number of the problem, and hence could lead to a slow convergence rate. 

A natural approach to handle the ill-conditioning issue in the low SNR case as well as eliminating the need to estimate the optimal function value is the use of Newton's method. As we show in this paper, this idea indeed addresses the issue of poor curvature of the problem and leads to an exponentially fast rate with contraction factor $\frac{2p-2}{2p-1}$ in the population case, where $p$ is the degree of the polynomial link function. Moreover, in the high SNR setting, Newton's method converges at a quadratic rate as the problem is strongly convex and smooth. Alas, these improvements come at the expense of increasing the computational complexity of each iteration to $\mathcal{O}(d^3)$ which is indeed more than the per iteration computational cost of GD that is $\mathcal{O}(d)$. These points lead to this question:

\begin{quote}

    {\textit{Is there a computationally-efficient method that performs well in both high and low SNR settings at a reasonable per iteration computational cost?}}
    
\end{quote}

\textbf{Contributions.} In this paper, we address this question and show that the BFGS  method is capable of achieving these goals. BFGS is a quasi-Newton method that approximates the objective function curvature using gradient information and its per iteration cost is $\mathcal{O}(d^2)$. It is well-known that it enjoys a superlinear convergence rate that is independent of condition number in strongly convex and smooth settings, and hence, in the high SNR setting it outperforms GD. In the low SNR setting, where  the Hessian at the optimal solution could be singular, we show that the BFGS method converges linearly and outperforms the sublinear convergence rate of GD.  Next, we formally summarize our contributions.

\begin{itemize}

\item \textbf{Infinite sample, low SNR:} For the infinite sample case where we minimize the population loss, we show that in the low SNR case the iterates of BFGS converge to the ground truth $\theta^*$ at an exponentially fast rate that is independent of all problem parameters except the power of link function $p$. We further show that the linear convergence contraction coefficient of BFGS is comparable to that of Newton’s method. This  convergence result of BFGS is also of general interest as it provides the first global linear convergence of BFGS without line-search for a setting that is neither strictly nor strongly convex.

\item \textbf{Finite sample, low SNR:}
By leveraging the results developed for the population loss of the low SNR regime, we show that in the finite sample case, the BFGS iterates converge to the final statistical radius $\mathcal{O}(1/n^{1/(2p+2)})$ within the true parameter after a logarithmic number of iterations $\mathcal{O}(\log(n))$. It is substantially lower than the required number of iterations for fixed-step size GD, which is $\mathcal{O}(n^{(p - 1)/p})$, to reach a similar statistical radius. This improvement is the direct outcome of the linear convergence of BFGS versus the sublinear convergence rate of GD in the low SNR case.  Further, while the iteration complexity of BFGS is comparable to the logarithmic number of iterations of  GD with Polyak step size, we show that  BFGS removes the dependency of the overall complexity on the condition number of the problem as well as the need to estimate the optimal function value. 
\item \textbf{Experiments:} We conduct numerical experiments for both infinite and finite sample cases to compare the performance of GD (with constant step size and Polyak step size), Newton's method and BFGS. The provided empirical results are consistent with our theoretical findings and show the advantages of BFGS in the low SNR regime.
\end{itemize}

\noindent \textbf{Outline.} In Section~\ref{sec:BFGS}, we discuss the BFGS quasi-Newton method. Section~\ref{sec:GLM} details three scenarios in Generalized Linear Models (GLMs): low, middle, and high SNR regimes, outlining the characteristics of the population loss in each. Section~\ref{sec:population_loss} explores BFGS's convergence in low SNR settings, highlighting its linear convergence rate, a marked improvement over gradient descent's sublinear rate. This section also compares the convergence rates of BFGS and Newton's method. Section~\ref{sec:sample_loss} applies these insights to establish the convergence results of BFGS for the empirical loss $\mathcal{L}_n$ in the low SNR regime. Lastly, our numerical experiments are presented in Section~\ref{sec:numerical_experiments}.

\section{BFGS algorithm}\label{sec:BFGS}

In this section, we review the basics of the BFGS quasi-Newton method, which is the main algorithm we analyze. Consider the case that we aim to minimize a differentiable convex function $f:\mathbb{R}^d \to \mathbb{R}$. The BFGS update is given by
\begin{equation}\label{BFGS_update}
    \theta_{k + 1} = \theta_{k} - \eta_{k} H_{k}\nabla{f(\theta_{k})}, \qquad \forall k \geq 0,
\end{equation}
where $\eta_k$ is the step size and $H_k \in \mathbb{R}^{d \times d}$ is a positive definite matrix that aims to approximate the true Hessian inverse $\nabla^2{f(\theta_k)}^{-1}$. 
There are several approaches for approximating $H_k$ leading to different quasi-Newton methods, \citep{conn1991convergence,broyden1965class,Broyden,gay1979some,davidon1959variable,fletcher1963rapidly,broyden1970convergence,fletcher1970new,goldfarb1970family,shanno1970conditioning,nocedal1980updating,liu1989limited}, but in this paper, we focus on the celebrated BFGS method, in which  $H_k$ is updated as
\begin{equation}\label{Hessian_update}
\begin{split}
    H_{k} = & \left(I - \frac{s_{k - 1} u_{k - 1}^\top}{s_{k - 1}^\top u_{k - 1}}\right)H_{k - 1}\left(I - \frac{u_{k - 1} s_{k - 1}^\top}{s_{k - 1}^\top u_{k - 1}}\right) + \frac{s_{k - 1} s_{k - 1}^\top}{s_{k - 1}^\top u_{k - 1}}, \qquad \forall k \geq 1,
\end{split}
\end{equation}
where the variable variation $s_{k}$ and gradient displacement $u_{k}$ are defined as
\begin{equation}\label{difference}
    s_{k - 1} : = \theta_{k} - \theta_{k - 1}, \qquad  u_{k - 1} : = \nabla{f(\theta_{k})} - \nabla{f(\theta_{k - 1})},
\end{equation}
for any $k \geq 1$ respectively. The logic behind the update in \eqref{Hessian_update} is to ensure that the Hessian inverse approximation $H_k$ satisfies the secant condition $H_k u_{k-1} = s_{k-1}$, while it stays close to the previous approximation matrix  $H_{k-1}$. The update in \eqref{Hessian_update} only requires matrix-vector multiplications, and hence, the computational cost per iteration of BFGS is $\mathcal{O}(d^2)$.

The main advantage of BFGS is its fast superlinear convergence rate under the assumption of strict convexity, 
$$
\lim_{k \to \infty}\frac{\|\theta_k - {\theta}_{opt}\|}{\|\theta_{k - 1} - {\theta}_{opt}\|} = 0,
$$
where ${\theta}_{opt}$ is the optimal solution. Previous results on the superlinear convergence of quasi-Newton methods were all asymptotic, until the recent advancements on the non-asymptotic analysis of quasi-Newton methods \citep{rodomanov2020greedy, rodomanov2020rates, rodomanov2020ratesnew, qiujiang2020quasinewton, qiujiang_sharpened, zhangzhihua2021quasinewton1, zhangzhihua2021quasinewton2, zhangzhihua2021quasinewton3}. For instance,  \citet{qiujiang2020quasinewton} established a local superlinear convergence rate of $(1/\sqrt{k})^{k}$ for BFGS. However, all these superlinear convergence analyses require the objective function to be smooth and strictly or strongly convex. Alas, these conditions are not satisfied in the low SNR setting, since the Hessian at the optimal solution could be singular, and hence the loss function is neither strongly convex nor strictly convex; we further discuss this point in Section~\ref{sec:GLM}. This observation implies that we need novel convergence analyses to study the behavior of BFGS in the low SNR setting, as we discuss in the upcoming sections.

\section{Generalized linear model with polynomial link function}\label{sec:GLM}

In this section, we formally present the generalized linear model (GLM) setting that we consider in our paper, and discuss the low and high SNR settings and optimization challenges corresponding to these cases. 
Consider the case that the feature vectors are denoted by $X\in \mathbb{R}^d$ and their corresponding labels are denoted by $Y\in \mathbb{R}$. Suppose that we have access to 
$n$ sample points $(Y_{1}, X_{1}), (Y_{2}, X_{2}), \ldots, (Y_{n}, X_{n})$ that are i.i.d. samples from the following generalized linear model with polynomial link function of power $p$~\citep{Carroll-1997}, i.e.,
\begin{align}
    Y_{i} = (X_{i}^{\top}\theta^{*})^{p} + \zeta_{i}, \label{eq:generalized_linear_model}
\end{align}
where $\theta^{*}$ is a true but unknown parameter, $p \in \mathbb{N}$ is a given power, and $\zeta_{1}, \ldots, \zeta_{n}$ are independent Gaussian noises with zero mean and variance $\sigma^2$. The Gaussian assumption on the noise is for the simplicity of the discussion and similar results hold for the sub-Gaussian
i.i.d. noise case. Furthermore, we assume the feature vectors are generated as $X \sim \mathcal{N}(0, \Sigma)$ where $\Sigma \in \mathbb{R}^{d \times d}$ is a symmetric positive definite matrix. Here we focus on the settings that $p\in\mathbb{N}^{+}$ and $p\geq 2$.

The above class of GLMs with polynomial link functions arise in several settings. When $p = 1$, the model in~\eqref{eq:generalized_linear_model} is the standard linear regression model, and for the case that $p = 2$, the above setup corresponds to the phase retrieval model~\citep{Fienup_82, Shechtman_Yoav_etal_2015, candes_2011, Netrapalli_Prateek_Sanghavi_2015}, which has found applications in optical imaging, x-ray tomography, and audio signal processing. Moreover, the analysis of GLMs with $p \geq 2$ also serves as the basis of the analysis on other popular statistical models. For example, as shown by~\citet{Caramanis-nips2015, HanLiu_nips2015, Hsu-nips2016, Siva_2017, Daskalakis_colt2017, Raaz_Ho_Koulik_2018_second, Kwon_Global, Raaz_Ho_Koulik_2020, Kwon_minimax}, the local landscape of log-likelihood for Gaussian mixture models and mixture linear regression models are identical to GLMs for $p=2$.

In the case that the polynomial link function parameter in the GLM model in~\eqref{eq:generalized_linear_model} is $p = 2$, by adapting similar arguments from~\cite{Kwon_minimax} under the symmetric two-component location Gaussian mixture, there are essentially three regimes for estimating the true parameter $\theta^{*}$: Low signal-to-noise ratio (SNR) regime: $\|\theta^{*}\|/ \sigma \leq C_{1} (d/n)^{1/4}$ where $d$ is the dimension, $n$ is the sample size, and $C_{1}$ is a universal constant; Middle SNR regime: $C_{1} (d/n)^{1/4} \!\leq \!\|\theta^{*}\|/ \sigma \leq C$ where $C$ is a universal constant; High SNR regime: $\|\theta^{*}\|/\sigma \geq C$. The main idea is that with different $\theta^*$, the optimization landscape of the parameter estimation problem changes. By generalizing the insights from the case $p=2$, we define the following regimes for any $p\geq 2$:
\vspace{-1mm}
\begin{itemize}
\vspace{-1.5mm}
    \item (i) Low SNR regime: $\|\theta^{*}\|/ \sigma \leq C_{1} (d/n)^{1/(2p)}$ where $d$ is the dimension, $n$ is the sample size, and $C_{1}$ is a universal constant; 
    \vspace{-0.5mm}
    \item (ii) Middle SNR regime: $C_{1} (d/n)^{1/(2p)} \!\leq \!\|\theta^{*}\|/ \sigma \leq C$ where $C$ is a universal constant; 
     \vspace{-0.5mm}
    \item (iii) High SNR regime: $\|\theta^{*}\|/\sigma \geq C$.
    \vspace{-1.5mm}
\end{itemize}
 Note that, the rate $(d/n)^{1/2p}$ that we use to define the SNR regimes is from the statistical rate of estimating the true parameter $\theta^{*}$ when $\theta^{*}$ approaches 0. Next, we provide insight into the landscape of the least-square loss function for each regime. In particular, given the GLM in~\eqref{eq:generalized_linear_model}, the sample least-square takes the following form:
\begin{equation}
     \min_{\theta \in \mathbb{R}^{d}} \mathcal{L}_{n}(\theta)  : = \frac{1}{n} \sum_{i=1}^n \left(Y_{i} - \left(X_{i}^{\top} \theta\right)^{p}\right)^2.\label{eq:sample_least_square}
\end{equation}
To obtain insight about the landscape of the empirical loss function $\mathcal{L}_{n}$, a useful approach is to consider that function by its population version, which we refer to as population least-square loss function:
\begin{equation}
    \min_{\theta \in \mathbb{R}^{d}} \mathcal{L}(\theta)  : = \mathbb{E}[\mathcal{L}_{n}(\theta)], \label{pop_loss}
\end{equation}
where the outer expectation is taken with respect to the data.

\textbf{High SNR regime.} In the setting that the ground truth parameter has a relatively large norm, i.e., $\|\theta^*\| \geq C$ for some constant $C>0$ that only depends on $\sigma$, the population  loss in \eqref{pop_loss} is locally strongly convex and smooth around $\theta^*$. More precisely, when $\|\theta-\theta^*\|$ is small, using Taylor's theorem we have 
\begin{align*}
     (X^{\top} \theta)^{p}- (X^{\top} \theta^*)^{p}  = p(X^{\top} \theta^*)^{p-1}X^\top(\theta-\theta^*) + o(\|\theta-\theta^*\|).
\end{align*}
Hence, in a neighborhood of the optimal solution, the objective in \eqref{pop_loss} can be approximated as
\begin{equation*}
\begin{split}
\mathcal{L}(\theta) & = \mathbb{E}[(Y - (X^\top \theta)^p)^2] = \mathbb{E}[((X^\top \theta^*)^p + \zeta - (X^\top \theta)^p)^2] \\
& =  \mathbb{E}[((X^\top \theta^*)^p - (X^\top \theta)^p)^2] + \mathbb{E}[2\zeta((X^\top \theta^*)^p - (X^\top \theta)^p)^2] + \mathbb{E}[\zeta^2] \\
& = \mathbb{E}[((X^\top \theta^*)^p - (X^\top \theta)^p)^2] + \sigma^2 \\
& = p^2(\theta-\theta^*)^\top \mathbb{E}_{X}\left[ X (X^{\top} \theta^*)^{2p-2}X^\top\right](\theta-\theta^*) + \sigma^2 + o(\|\theta-\theta^*\|^2).
\end{split}
\end{equation*}
Indeed, if $\|\theta^*\|\geq C\sigma$ the above function behaves as a quadratic function that is smooth and strongly convex, assuming that $o(\|\theta-\theta^*\|^2)$ is negligible. As a result, the iterates of gradient descent (GD) converge to the solution at a linear rate and it requires $\kappa\log(1/\epsilon)$ to reach an $\epsilon$-accurate solution, where $\kappa$ depends on the conditioning of the covariance matrix $\Sigma$ and the norm of $\theta^*$. In this case, BFGS converges superlinearly to $\theta^*$ and the rate would be independent of $\kappa$, while the cost per iteration would be $\mathcal{O}(d^2)$.

\textbf{Low SNR regime.} As mentioned above, in the high SNR case, GD has a fast linear rate. However, in the low SNR case where $\|\theta^*\|$ is small and $\|\theta^{*}\| \leq C_{1} (d/n)^{1/(2p)}$, the strong convexity parameter approaches zero when the sample size $n$ goes to infinity and the problem becomes ill-conditioned. In this case, we deal with a \textit{function that is only convex and its gradient is not Lipschitz continuous}. To better elaborate on this point, let us focus on the case that $\theta^*=0$ as a special case of the low SNR setting. Considering the underlying distribution of $X$, which is $X  \sim \mathcal{N}(0, \Sigma)$, for such a low SNR case,  the population loss can be written as
\begin{equation}\label{low_snr_pop}
\mathcal{L}(\theta) = \mathbb{E}_{X}\left[ (X^{\top} \theta)^{2p} \right] +\sigma^2= (2p-1)!!\|\Sigma^{1/2}\theta\|^{2p}+\sigma^2.
\end{equation}
Since we focus on $p\geq 2$ it can be verified that $\mathcal{L}(\theta)$ is not strongly convex in a neighborhood of the solution $\theta^* = 0$. For this class of functions, it is well-known that GD with constant step size would converge at a sublinear rate, and hence GD iterates require polynomial number of iterations to reach the final statistical radius. In the next section, we study the behavior of BFGS for solving the low SNR setting and showcase its advantage compared to GD. 

\textbf{Middle SNR regime.} Different from the low and high SNR regimes, the middle SNR regime is generally harder to analyze as the landscapes of both the population and sample least-square loss functions are complex. Adapting the insight from middle SNR regime of the symmetric two-component location Gaussian mixtures from~\cite{Kwon_minimax}, for the middle SNR setting of the generalized linear model, the eigenvalues of the Hessian matrix of the population least-square loss function approach 0 and their vanishing rates depend on some increasing function of $\|\theta^{*}\|$. The optimal statistical rate of the optimization algorithms, such as gradient descent algorithm, for solving $\theta^{*}$ depends strictly on the tightness of these vanishing rates in terms of $\|\theta^{*}\|$, which are non-trivial to obtain. In fact, to the best of our knowledge, there is no result on the convergence of iterative methods (such as GD or its variants) for GLMs with a polynomial link function in the middle SNR. Hence, we leave the study of BFGS for this setting as a future work. 

\section{Convergence analysis in the low SNR regime: Population loss}\label{sec:population_loss}

In this section, we focus on the convergence properties of BFGS for the population loss in the low SNR case introduced in \eqref{low_snr_pop}. This analysis provides an intuition for the analysis of the finite sample case that we discuss in Section~\ref{sec:sample_loss}, as we expect these two loss functions to be close to each other when the number of samples $n$  is sufficiently large. In this section, instead of focusing on the population loss within the low SNR regime, as described in \eqref{low_snr_pop}, we shift our attention to a more encompassing objective function, $f$. Detailed in the following expression, this function encompasses the population loss function $\mathcal{L}$ with $\theta^* = 0$ as a specific example. This approach enables us to present our results in the most general setting possible. Specifically, we examine the function $f: \mathbb{R}^{d} \rightarrow \mathbb{R}$, defined as follows:
\begin{equation}\label{opt_problem}
    \min_{\theta \in \mathbb{R}^{d}} f(\theta) = \|A\theta - b\|^{q},
\end{equation}
where $A \in \mathbb{R}^{m \times d}$ is a matrix, $b \in \mathbb{R}^{m}$ is a given vector, and $q \in \mathbb{Z}$ satisfies $q \geq 4$. We should note that for $q \geq 4$, the considered objective is not strictly convex because the Hessian is singular when $A\theta = b$. Indeed, if we set $m=d$ and further let $A$ be $\Sigma^{1/2}$ and choose $b=A\theta^*=0$, then we recover the problem in \eqref{low_snr_pop} for $q=2p$. Note that since we focus on the generalized linear model with the polynomial link function, which necessitates that $p$ be an integer, the parameter $q = 2p$ is also an integer.

Notice that the problem in \eqref{opt_problem}, which serves as a surrogate for the finite sample problem that we plan to study in the next section, has the same solution set as the quadratic problem of minimizing $\|A\theta - b\|^{2}$ with solution $(A^\top A)^{-1}A^{\top}b$ when $A^\top A$ is invertible. Given this point, one might suggest that instead of minimizing \eqref{opt_problem} we could directly solve the quadratic problem which is indeed much easier to solve. This point is valid, but the goal of this section is not to efficiently solve problem \eqref{opt_problem} itself. Our goal is to understand the convergence properties of the BFGS method for solving the problem in \eqref{opt_problem} with the hope that it will provide some intuitions for the convergence analysis of BFGS for the empirical loss \eqref{eq:sample_least_square} in the low SNR regime. As we will discuss in Section~\ref{sec:sample_loss}, the convergence analysis of BFGS on the population loss which is a special case of \eqref{opt_problem} is closely related to the one for the empirical loss in \eqref{eq:sample_least_square}.

\begin{remark}\label{remark_1}
One remark is that population loss in \eqref{low_snr_pop} holds for the restrictive assumption of $\theta^* = 0$, which is only a special case of the general low SNR regime of $\|\theta^{*}\| \leq C_{1} (d/n)^{1/(2p)}$. Our ultimate goal is to analyze the convergence behavior of the BFGS method applied to the empirical loss \eqref{eq:sample_least_square} of GLM problems in the low SNR regime. The errors between gradients and Hessians of the population loss with $\theta^* = 0$ and $\|\theta^{*}\| \leq C_{1} (d/n)^{1/(2p)}$ are upper bounded by the corresponding statistical errors between the population loss \eqref{pop_loss} and the empirical loss \eqref{eq:sample_least_square} in the low SNR regime, respectively. Therefore, the errors between iterations of applying BFGS to the population loss with $\theta^* = 0$ and $\|\theta^{*}\| \leq C_{1} (d/n)^{1/(2p)}$ are negligible compared to the statistical errors. Instead of directly analyzing BFGS for the population loss \eqref{pop_loss} in the general low SNR regime, studying the convergence properties of BFGS for the population loss \eqref{low_snr_pop} with $\theta^* = 0$ can equivalently lay foundations for the convergence analysis of BFGS for the empirical loss \eqref{eq:sample_least_square} in the low SNR regime, which is the main target of this paper. The details can be found in Appendix~\ref{sec:remark_1}.
\end{remark}

Our results in this section are of general interest from an optimization perspective, since there is no global convergence theory (without line-search) for the BFGS method in the literature when the objective function is not strictly convex. Our analysis provides one of the first results for such general settings. That said, it should be highlighted that our results do not hold for general convex problems, and we do utilize the specific structure of the considered convex problem to establish our results. The following examples illustrate some of the specific structures that we exploit to establish our result.

\begin{assumption}\label{ass_1}
There exists $\hat{\theta} \in \mathbb{R}^{d}$, such that $ b = A\hat{\theta}$. 
In other words, $b$ is in the range of matrix $A$.
\end{assumption}
This assumption implies that the problem in \eqref{opt_problem} is realizable, $\hat{\theta}$ is an optimal solution, and the optimal function value is zero. 
This assumption is satisfied in our considered low SNR setting in \eqref{low_snr_pop} as $\theta^*=0$ which implies $b=0$.
\begin{assumption}\label{ass_2}
The matrix $A^\top A \in \mathbb{R}^{d \times d}$ is invertible. This is equivalent to $A^\top A \succ 0$.
\end{assumption}
The above assumption is also satisfied for our considered setting as we assume that the covariance matrix for our input features is positive definite. Combining Assumptions~\ref{ass_1} and \ref{ass_2}, we conclude that $\hat{\theta}$ is the unique optimal solution of problem \eqref{opt_problem}. Next, we state the convergence rate of BFGS for solving problem \eqref{opt_problem} under the disclosed assumptions. The proof of the next theorem is available in Appendix~\ref{sec:proof_of_theorem_1}. 

\vspace{2mm}

\begin{theorem}\label{theorem_1}
Consider the BFGS method in \eqref{BFGS_update}-\eqref{difference}. Suppose Assumptions \ref{ass_1} and \ref{ass_2} hold, and the initial Hessian inverse approximation matrix is selected as $H_0 = \nabla^{2}{f(\theta_0)}^{-1}$, where $\theta_0 \in \mathbb{R}^d$ is an arbitrary initial point. If the step size is $\eta_k = 1$ for all $k \geq 0$, then the iterates of BFGS converge to the optimal solution $\hat{\theta}$ at a linear rate of
\begin{equation}\label{convergence_result}
    \|\theta_{k} - \hat{\theta}\| \leq r_{k - 1} \|\theta_{k - 1} - \hat{\theta}\|, \quad \forall k \geq 1,
\end{equation}
where the contraction factors  $r_k\in[0,1)$ satisfy 
\begin{equation}\label{linear_rate}
    r_{0} = \frac{q - 2}{q - 1}, \qquad r_{k} = \frac{1 - r_{k-1}^{q - 2}}{1 - r_{k-1}^{q - 1}}, \quad \forall k \geq 1.
\end{equation}
\end{theorem}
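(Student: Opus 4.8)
The plan is to exploit the affine invariance of BFGS to reduce the problem to the canonical objective $\tilde f(\tilde\theta) = \|\tilde\theta\|^q$, and then to show that all iterates remain collinear so that the dynamics collapse to a scalar recursion. Concretely, since $f(\theta) = \|A(\theta - \hat\theta)\|^q$ and $\|A\phi\|^2 = \phi^\top B\phi$ with $B := A^\top A \succ 0$, I would introduce the change of variables $\tilde\theta = B^{1/2}(\theta - \hat\theta)$, under which $f(\theta) = \|\tilde\theta\|^q =: \tilde f(\tilde\theta)$ and the optimum $\hat\theta$ maps to the origin. Because BFGS run with unit step size and the exact initial inverse Hessian $H_0 = \nabla^2 f(\theta_0)^{-1}$ is invariant under invertible linear changes of variables, it suffices to analyze BFGS on $\tilde f$ started from $\tilde\theta_0 = B^{1/2}(\theta_0 - \hat\theta)$ with $\tilde H_0 = \nabla^2 \tilde f(\tilde\theta_0)^{-1}$; moreover, once I establish that all iterates are collinear with $\tilde\theta_0$, the ratio $\|\theta_k-\hat\theta\|/\|\theta_{k-1}-\hat\theta\|$ equals $\|\tilde\theta_k\|/\|\tilde\theta_{k-1}\|$, so the contraction factors transfer back verbatim.

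On the canonical objective I would first compute $\nabla\tilde f(\tilde\theta) = q\|\tilde\theta\|^{q-2}\tilde\theta$ and $\nabla^2\tilde f(\tilde\theta) = q\|\tilde\theta\|^{q-4}\bigl[(q-2)\tilde\theta\tilde\theta^\top + \|\tilde\theta\|^2 I\bigr]$. Writing $e = \tilde\theta_0/\|\tilde\theta_0\|$, the matrix $\tilde H_0$ is diagonal in the splitting $\mathrm{span}(e)\oplus e^\perp$, with eigenvalue $1/(q(q-1)\|\tilde\theta_0\|^{q-2})$ along $e$ and $1/(q\|\tilde\theta_0\|^{q-2})$ on $e^\perp$. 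A direct computation of the first step then gives $\tilde\theta_1 = \frac{q-2}{q-1}\tilde\theta_0$, which establishes $r_0 = \frac{q-2}{q-1}$ and shows $\tilde\theta_1$ lies on the ray $\mathbb{R}_{>0}e$.

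The crux of the argument — and the step I expect to be the main obstacle — is maintaining the right invariant along the iterations: I would prove by induction that (i) $\tilde\theta_k = c_k\tilde\theta_0$ for a positive scalar $c_k$, so that $\nabla\tilde f(\tilde\theta_k)$, $s_{k-1}$ and $u_{k-1}$ are all parallel to $e$, and (ii) $\tilde H_k$ remains block-diagonal in $\mathrm{span}(e)\oplus e^\perp$ with its $e^\perp$-block frozen at $1/(q\|\tilde\theta_0\|^{q-2})$ and only its scalar $e$-component $h_k$ evolving. The engine here is that when $s_{k-1} = \alpha e$ and $u_{k-1} = \beta e$ are both parallel to $e$, the BFGS update \eqref{Hessian_update} collapses to $\tilde H_k = P\tilde H_{k-1}P + (\alpha/\beta)ee^\top$ with $P = I - ee^\top$, which leaves the orthogonal block untouched and overwrites the $e$-component by the scalar secant ratio $\alpha/\beta = s_{k-1}^\top s_{k-1}/s_{k-1}^\top u_{k-1}$. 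Since $\tilde H_k$ sends the (gradient) direction $e$ to a multiple of $e$, the next step again stays on the ray, closing the induction; care must be taken to verify the curvature condition $s_{k-1}^\top u_{k-1} > 0$ so that the update is well defined, which follows from positivity of the $c_k$ and monotonicity of $x \mapsto x^{q-1}$.

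Finally, the collinear dynamics reduce to the scalar secant iteration $c_{k+1} = c_k - h_k\,q\,c_k^{q-1}\|\tilde\theta_0\|^{q-2}$ with $h_k = (c_k - c_{k-1})/\bigl(q\|\tilde\theta_0\|^{q-2}(c_k^{q-1} - c_{k-1}^{q-1})\bigr)$. Setting $r_{k-1} := c_k/c_{k-1}$ and substituting $c_{k-1} = c_k/r_{k-1}$, the $\|\tilde\theta_0\|$ factors cancel and a short simplification yields $r_k = 1 - \frac{(r_{k-1}-1)r_{k-1}^{q-2}}{r_{k-1}^{q-1}-1} = \frac{1 - r_{k-1}^{q-2}}{1 - r_{k-1}^{q-1}}$, exactly the recursion in \eqref{linear_rate}. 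To conclude that this is a genuine linear rate I would close with an elementary induction showing $r_k \in (0,1)$: if $r_{k-1}\in(0,1)$ then both $1-r_{k-1}^{q-2}$ and $1-r_{k-1}^{q-1}$ are positive so $r_k>0$, while $r_k<1$ is equivalent to $r_{k-1}^{q-1} < r_{k-1}^{q-2}$, which holds since $r_{k-1}<1$; the base case is $r_0 = \frac{q-2}{q-1}\in(0,1)$.
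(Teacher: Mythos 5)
Your proposal is correct, and its engine is the same as the paper's: an induction showing that every iterate stays on the ray through the initial error vector, powered by the observation that when $s_{k-1}$, $u_{k-1}$ and $\nabla f(\theta_k)$ are all parallel, the projection factor $I - u_{k-1}s_{k-1}^\top/(s_{k-1}^\top u_{k-1})$ in \eqref{Hessian_update} annihilates the gradient, so the BFGS direction collapses to the scalar secant step $\frac{s_{k-1}s_{k-1}^\top}{s_{k-1}^\top u_{k-1}}\nabla f(\theta_k)$, independently of anything stored in $H_{k-1}$. Where you differ is the scaffolding. The paper works directly in the original variables: it computes $\nabla^2 f(\theta)^{-1}$ for general $A$ via the Sherman--Morrison formula (Lemma~\ref{lemma_1}), obtains $r_0 = (q-2)/(q-1)$ from the exact Newton first step, and in the inductive step needs no information about $H_{k-1}$ at all, precisely because the projection kills the gradient. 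You instead invoke affine invariance of BFGS to pass to the canonical objective $\|\tilde\theta\|^q$, which buys cleaner algebra (the inverse Hessian at $\tilde\theta_0$ is read off spectrally in the splitting $\mathrm{span}(e)\oplus e^{\perp}$ rather than through Sherman--Morrison) and a sharper structural picture: you can track that $\tilde H_k$ remains block-diagonal with its $e^{\perp}$-block frozen, which is more than the paper ever needs to know. The price is an extra proof obligation the paper avoids: affine invariance of the BFGS iteration under the map $\tilde\theta = (A^\top A)^{1/2}(\theta-\hat\theta)$, with $H_0$ transformed consistently (automatic here since $H_0=\nabla^2 f(\theta_0)^{-1}$), is classical but should be stated and verified or cited, since your entire reduction rests on it. Your scalar recursion, the identity $r_k = \frac{1-r_{k-1}^{q-2}}{1-r_{k-1}^{q-1}}$, the curvature check $s_{k-1}^\top u_{k-1}>0$, and the induction $r_k\in(0,1)$ all match the paper's conclusion exactly; indeed both arguments yield the stronger equality $\theta_k-\hat\theta = r_{k-1}(\theta_{k-1}-\hat\theta)$ rather than only the stated inequality.
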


Theorem~\ref{theorem_1} shows that the iterates of BFGS converge globally at a linear rate to the optimal solution of \eqref{opt_problem}. This result is of interest as it illustrates the iterates generated by BFGS converge globally without any line search scheme and the step size is fixed as $\eta_k = 1$ for any $k \geq 0$. Moreover, the initial point $\theta_0$ is arbitrary and there is no restriction on the distance between $\theta_0$ and the optimal solution $\hat{\theta}$.

We should highlight the above linear convergence  result and our convergence analysis both rely heavily on the distinct structure of problem \eqref{opt_problem} and may not hold for a general convex minimization problem. Specifically, it can be shown that if we had access to the exact Hessian and could perform a Newton's update to solve the problem in \eqref{opt_problem}, then the error vectors $(\theta_k - \hat{\theta})_{k \geq 0}$ would all be parallel. This property arises due to the fact that for the problem in \eqref{opt_problem}  the Newton direction is $\nabla ^2f(\theta_{k-1})^{-1}\nabla f(\theta_{k-1})= (\theta_{k-1}- \hat{\theta}) - \frac{q-2}{q-1} (\theta_{k-1}- \hat{\theta})$. Consequently, the next error vector $\theta_{k-1}- \hat{\theta}$ computed by running one step of Newton would satisfy $\theta_{k}- \hat{\theta}= \frac{q-2}{q-1} (\theta_{k-1}- \hat{\theta})$. Therefore, the error vector at time $k$, denoted by $\theta_{k}- \hat{\theta}$, is parallel to the previous error vector $\theta_{k-1}- \hat{\theta}$, with the only difference being that its norm is reduced by a factor of $\frac{q-2}{q-1}$. Using an induction argument, it simply follows that all error vectors $(\theta_k - \hat{\theta})_{k \geq 0}$ remain parallel to each other while their norm contracts at a rate of $\frac{q-2}{q-1}$. This is a key point that is used in the result for Newton's method in Theorem~\ref{theorem_3}. 
In the convergence analysis of BFGS (stated in the proof of Theorem~\ref{theorem_1}), we use a similar argument. While we cannot guarantee that the Hessian approximation matrix in the BFGS method matches the exact Hessian, we demonstrate that it can inherit this property, maintaining all error vectors $(\theta_k - \hat{\theta})_{k \geq 0}$ as parallel to each other. Additionally, we show that the norm is shrinking at a factor of $r_k<1$, which is always larger than $\frac{q-2}{q-1}$, yet remains independent of the problem's condition number or dimensions. In the following theorem, we show that for $q \geq 4$, the linear rate contraction factors $\{r_k\}_{k = 0}^{\infty}$ also converge linearly to a fixed point contraction factor $r_*$ determined by the parameter $q$. The proof is available in Appendix~\ref{sec:proof_of_theorem_2}.

\vspace{2mm}
\begin{theorem}\label{theorem_2}
Consider the linear convergence factors  $\{r_k\}_{k = 0}^{\infty}$ defined in \eqref{linear_rate} from Theorem~\ref{theorem_1}. If $q \geq 4$ and $q \in \mathbb{Z}$, then the sequence $\{r_k\}_{k = 0}^{\infty}$ converges to  $r_* \in (0, 1)$ that is determined by the equation
\vspace{-0.5mm}
\begin{equation}\label{fixed_point}
    r_{*}^{q - 1} + r_{*}^{q - 2} = 1,
    \vspace{-0.5mm}
\end{equation}
and the rate of convergence is linear with a contraction factor that is at most ${1}/{2}$, i.e.,
\vspace{-0.5mm}
\begin{equation}\label{factors_convergence}
    |r_k - r_*| \leq \left({1}/{2}\right)^{k}|r_0 - r_*|, \qquad \forall k \geq 0.
    \vspace{-0.5mm}
\end{equation}
\end{theorem}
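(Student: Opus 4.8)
The plan is to read the recursion in \eqref{linear_rate} as a fixed-point iteration $r_k = g(r_{k-1})$ for the map $g(r) = (1-r^{q-2})/(1-r^{q-1})$ on $(0,1)$, and to show that $g$ is a contraction of modulus at most $1/2$ whose unique fixed point is $r_*$. First I would verify that $r_*$ is well defined: the function $h(r) = r^{q-1}+r^{q-2}-1$ is continuous and strictly increasing on $[0,1]$ with $h(0)=-1<0$ and $h(1)=1>0$, so \eqref{fixed_point} has a unique root $r_*\in(0,1)$; clearing the nonzero factor $1-r$ shows that on $(0,1)$ the equation $g(r)=r$ is equivalent to $h(r)=0$, so $r_*$ is exactly the unique fixed point of $g$. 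I would also check that the iteration stays in the domain: a direct computation gives $g(0^+)=1$ and $g(1^-)=(q-2)/(q-1)=r_0$, and differentiating shows $g'(r)<0$ on $(0,1)$, so $g$ maps $(0,1)$ into $(r_0,1)\subset(0,1)$. Since $r_0\in(0,1)$ for $q\ge4$, every iterate $r_k$ lies in $(0,1)$ and the denominators never vanish.

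The whole theorem then reduces to the single uniform estimate $|g'(r)|\le \tfrac12$ for all $r\in(0,1)$ and all integers $q\ge4$. Granting this, the mean value theorem gives $|r_k-r_*| = |g(r_{k-1})-g(r_*)| = |g'(\xi)|\,|r_{k-1}-r_*|$ for some $\xi$ between $r_{k-1}$ and $r_*$; since $\xi\in(0,1)$ we get $|r_k-r_*|\le\tfrac12|r_{k-1}-r_*|$, and iterating down to $k=0$ yields exactly \eqref{factors_convergence}, which in particular forces $r_k\to r_*$. To set up the derivative bound I would compute, over a common denominator,
\[
g'(r) = \frac{-\,r^{q-3}\bigl[(q-2)-(q-1)r+r^{q-1}\bigr]}{\bigl(1-r^{q-1}\bigr)^2},
\]
note that the bracket is positive on $(0,1)$ (so $g'<0$), and observe that $|g'(r)|\le\tfrac12$ is equivalent to $P(r):=(1-r^{q-1})^2 - 2r^{q-3}\bigl[(q-2)-(q-1)r+r^{q-1}\bigr]\ge0$. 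Both terms of $P$ are divisible by $(1-r)^2$, since $1-r^{q-1}=(1-r)\sum_{i=0}^{q-2}r^i$ and $(q-2)-(q-1)r+r^{q-1}=(1-r)^2\sum_{m=0}^{q-3}(q-2-m)r^m$; dividing it out reduces the claim to $D(r)\ge0$ on $[0,1]$, where
\[
D(r) := \Bigl(\sum_{i=0}^{q-2} r^i\Bigr)^2 - 2\,r^{q-3}\sum_{m=0}^{q-3}(q-2-m)\,r^m .
\]

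To prove $D(r)\ge0$ I would expand $D(r)=\sum_{k=0}^{2q-4} d_k r^k$ and apply Abel summation, writing $D(r)=(1-r)\sum_{t=0}^{2q-5}A_t r^t + A_{2q-4}r^{2q-4}$ with partial sums $A_t=\sum_{k=0}^{t} d_k$; since $1-r\ge0$ and $r^t\ge0$ on $[0,1]$, it suffices to show $A_t\ge0$ for every $t$. The coefficients $d_k$ are positive at the low degrees $0,\dots,q-4$, the subtracted block $2r^{q-3}\sum_m(q-2-m)r^m$ lowers only the intermediate degrees $q-3,\dots,2q-6$, and the two top degrees stay positive with $A_{2q-4}=D(1)=q-1>0$. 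A closed-form evaluation of the prefix sums (splitting at $t=q-2$) shows that the minimum of $A_t$ is attained in the middle and equals $q-5$, so $A_t\ge q-5\ge0$ for all $q\ge5$, which finishes that case; the borderline value $q=4$, where this prefix bound fails, is handled directly by the explicit decomposition $D(r)=(1-r)^2+2r^3+r^4$.

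The main obstacle is precisely this positivity step, and the difficulty is that it is sharp: a scaling analysis with $r=1-t/(q-1)$ shows $\sup_{r\in(0,1)}|g'(r)|\to\tfrac12$ as $q\to\infty$, so no slack-preserving estimate can succeed, and correspondingly the prefix-sum minimum $q-5$ touches $0$ at $q=5$ while $q=4$ must be treated as a genuine special case. The delicate part is therefore carrying out the coefficient bookkeeping exactly rather than with a crude bound. As an alternative that buys room, one can instead confine the iterates to the trapping interval $[r_0,r_1]$ — the even and odd subsequences of a decreasing map are monotone and squeeze $r_*$ — on which $|g'|$ stays well below $1/2$; but establishing the bound on all of $(0,1)$ is cleaner because it makes the invariance of the domain automatic and avoids the monotone-subsequence argument entirely.
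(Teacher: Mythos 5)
Your proposal is correct, and although it shares the paper's overall skeleton---reading \eqref{linear_rate} as the fixed-point iteration $r_k = g(r_{k-1})$, reducing everything to the uniform bound $|g'(r)| \le \tfrac12$, and concluding via the contraction/fixed-point argument---it proves the crucial polynomial inequality by a genuinely different, and in fact stronger, route. The paper establishes the equivalent positivity statement (its $h(r)\ge 0$, which is exactly your $P(r)\ge 0$) by repeatedly differentiating and tracking signs through $h^{(1)},\dots,h^{(4)}$; that calculus argument hinges on $2(q-2)^2(q-3) - q(q+1)(q-1) > 0$ and therefore only works for $q \ge 12$, so for $4 \le q \le 11$ the paper falls back on inspecting plots of $|g'(r)|$ (Figure~\ref{fig:g}), i.e., a numerical rather than rigorous verification. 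Your argument instead factors $(1-r)^2$ out of both terms of $P$, reducing the claim to $D(r)\ge 0$, and proves that by Abel summation with nonnegative prefix sums---a purely algebraic argument that covers every $q\ge 5$ uniformly, with $q=4$ disposed of by the identity $D(r) = (1-r)^2 + 2r^3 + r^4$. This closes the very gap the paper leaves open, and your sharpness remark ($|g'(1^-)| = \tfrac{q-2}{2(q-1)} \to \tfrac12$ as $q\to\infty$) explains why no approach with uniform slack can avoid delicate bookkeeping. You are also more careful than the paper about the hypotheses of the fixed-point lemma: you verify uniqueness of $r_*$ in $(0,1)$ and the invariance $g\bigl((0,1)\bigr) \subset (r_0,1)$, both of which the paper's appeal to Lemma~\ref{lemma_2} implicitly requires but never checks. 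One small correction to your write-up: the claim ``$A_t \ge q-5$ for all $t$'' is literally false for $q \ge 7$, since $A_0 = 1 < q-5$; the correct statement is that $A_t = \binom{t+2}{2} \ge 1$ for $t \le q-4$, while for $t \ge q-3$ the prefix sums are bounded below by $q-5$ (this middle minimum is attained at $t = 2q-8,\,2q-7$), so that $A_t \ge \min(1,\,q-5) \ge 0$ for all $q \ge 5$---which is all the Abel-summation step needs, so the slip is cosmetic and does not affect the validity of your argument.
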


 Based on Theorem~\ref{theorem_2}, the iterates of BFGS eventually converge to the optimal solution at the linear rate of $r_*$ determined by \eqref{fixed_point}. Specifically, the factors $\{r_k\}_{k = 0}^{\infty}$ converge to the fixed point $r_*$ at a linear rate with the contraction factor of ${1}/{2}$. Further, the linear convergence factors $\{r_k\}_{k = 0}^{\infty}$ and their limit $r_*$ are all only determined by $q$, and they are independent of the dimension $d$ and the condition number $\kappa_A$ of the matrix $A$. Hence, the performance of BFGS is not influenced by high-dimensional or ill-conditioned problems. This result is independently important from an optimization point of view, as it provides the first global linear convergence of BFGS without line-search for a setting that is not strictly or strongly convex, and interestingly the constant of linear convergence is independent of dimension or condition number. 

\begin{figure*}[t!]
\centering
\begin{subfigure}{0.24\textwidth}
    \includegraphics[width=\textwidth]{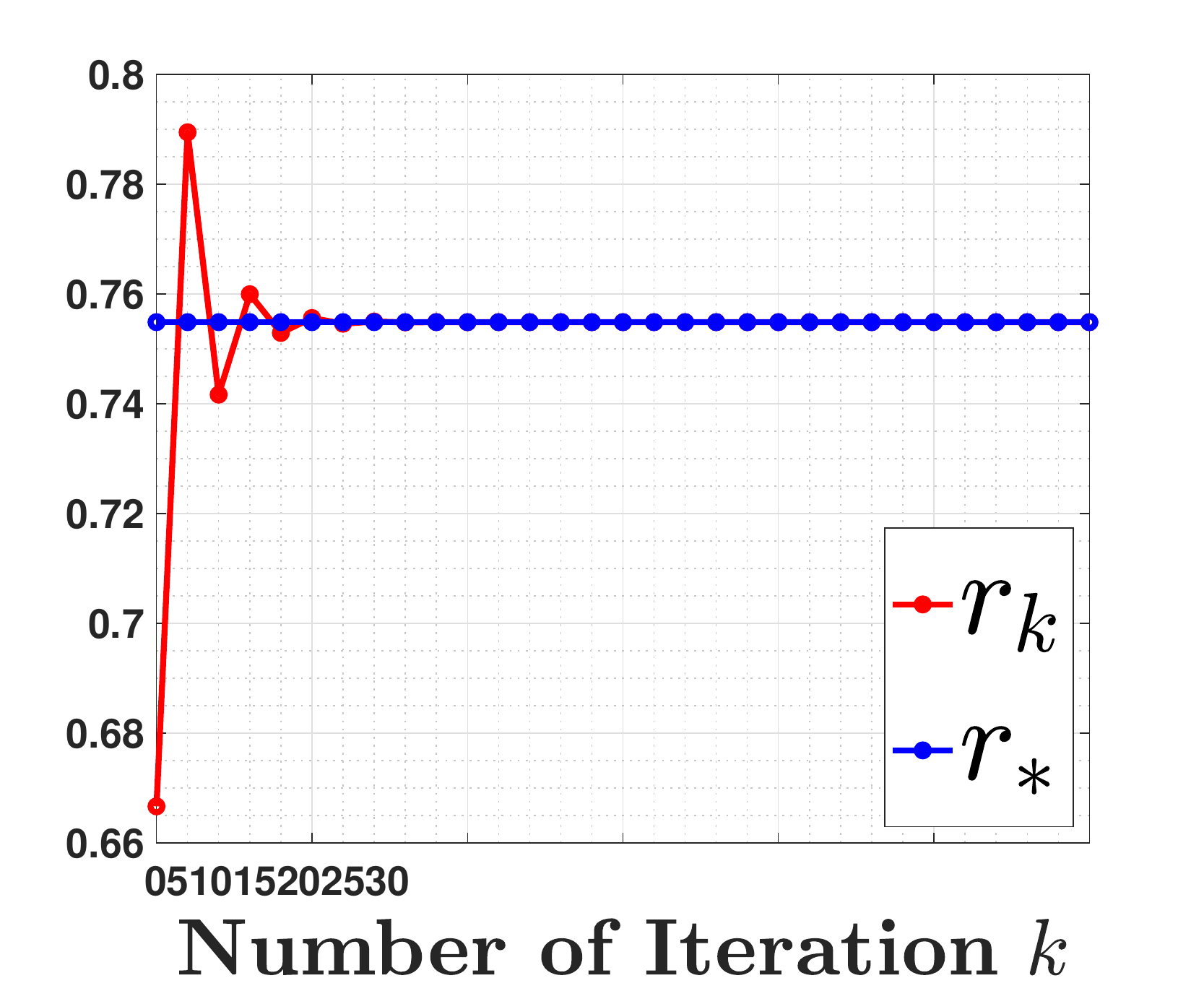}
    \caption{$q = 4$.}
\end{subfigure}
\hfill
\begin{subfigure}{0.24\textwidth}
    \includegraphics[width=\textwidth]{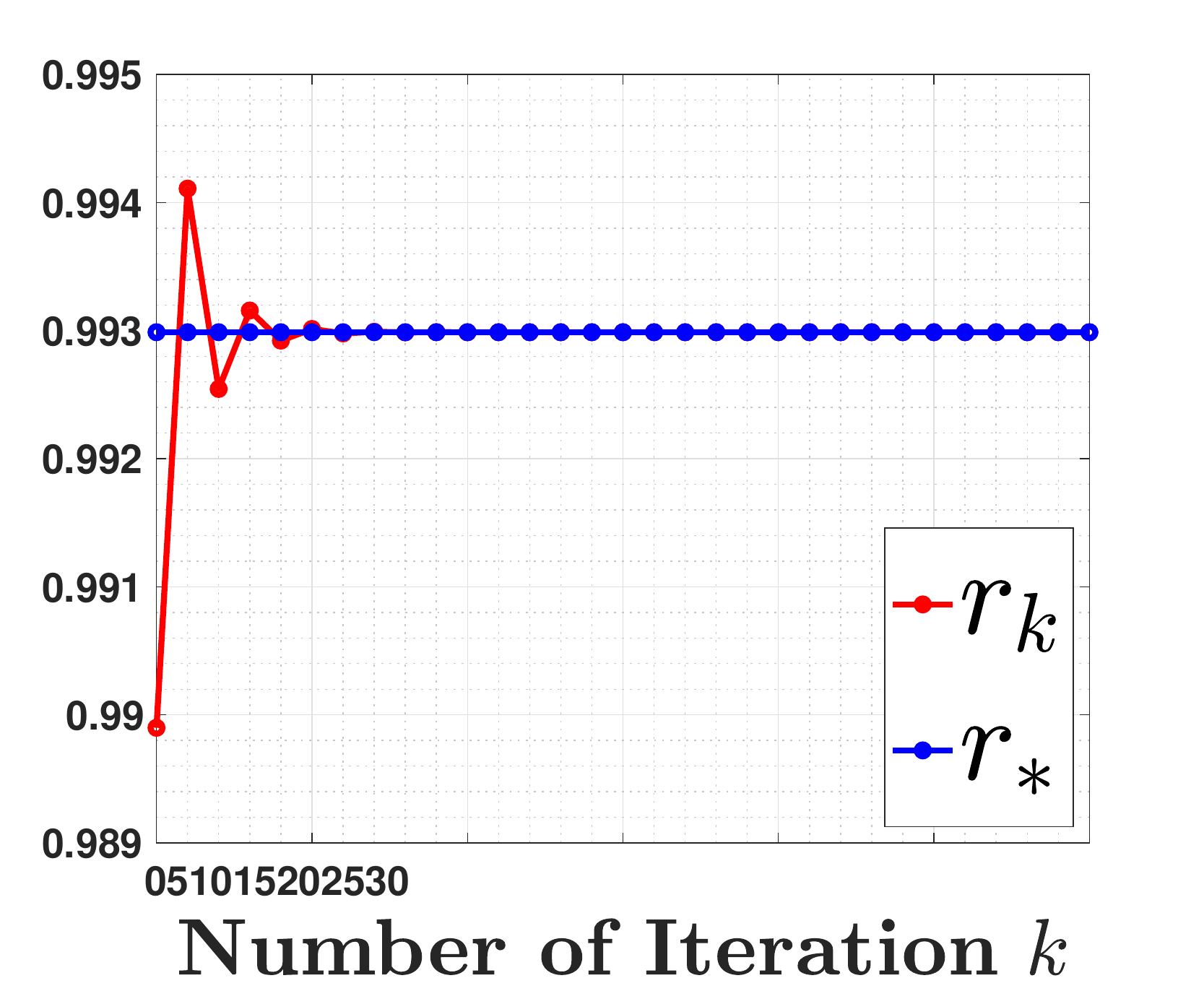}
    \caption{$q = 100$.}
\end{subfigure}
\hfill
\begin{subfigure}{0.24\textwidth}
    \includegraphics[width=\textwidth]{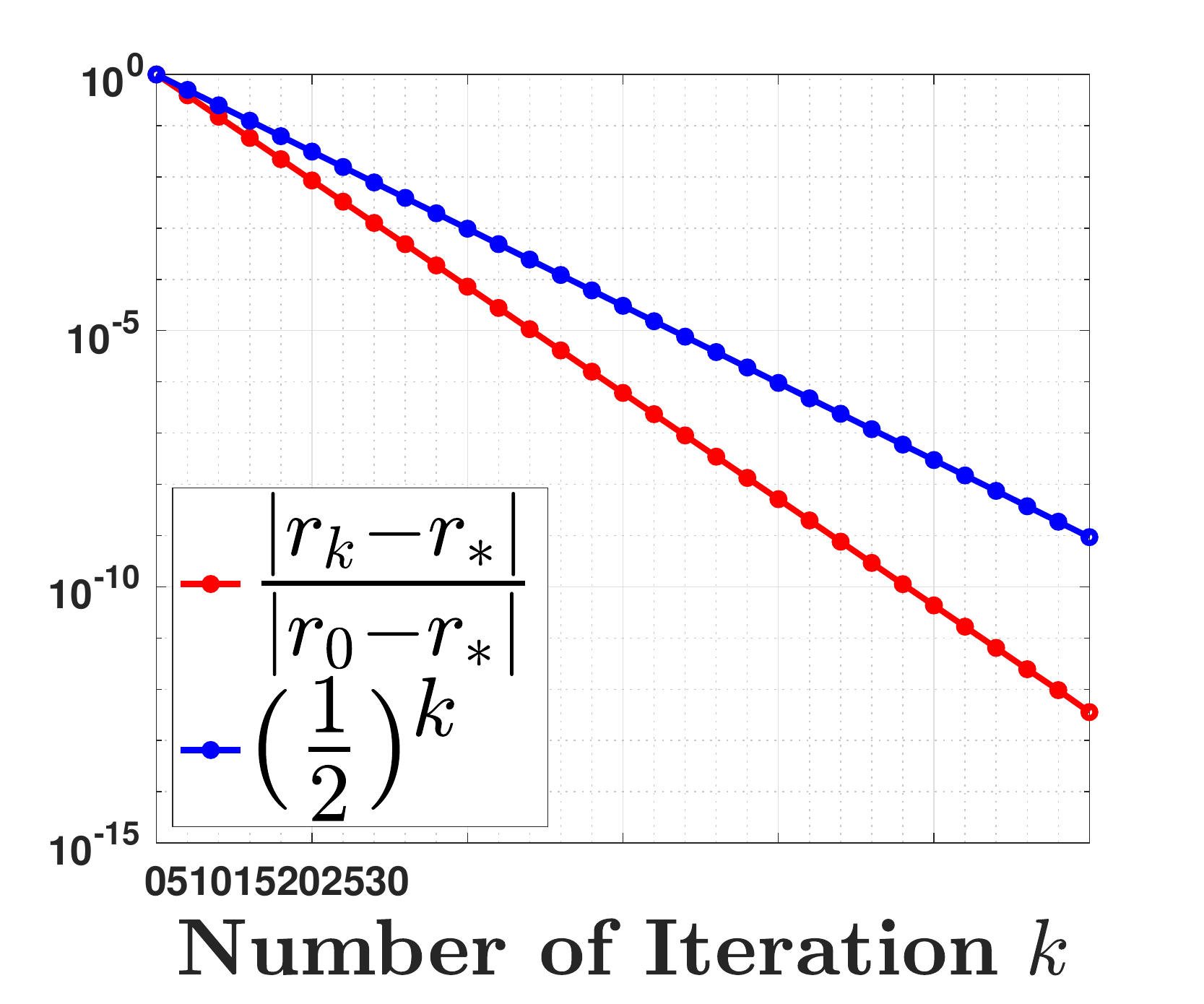}
    \caption{$q = 4$.}
\end{subfigure}
\begin{subfigure}{0.24\textwidth}
    \includegraphics[width=\textwidth]{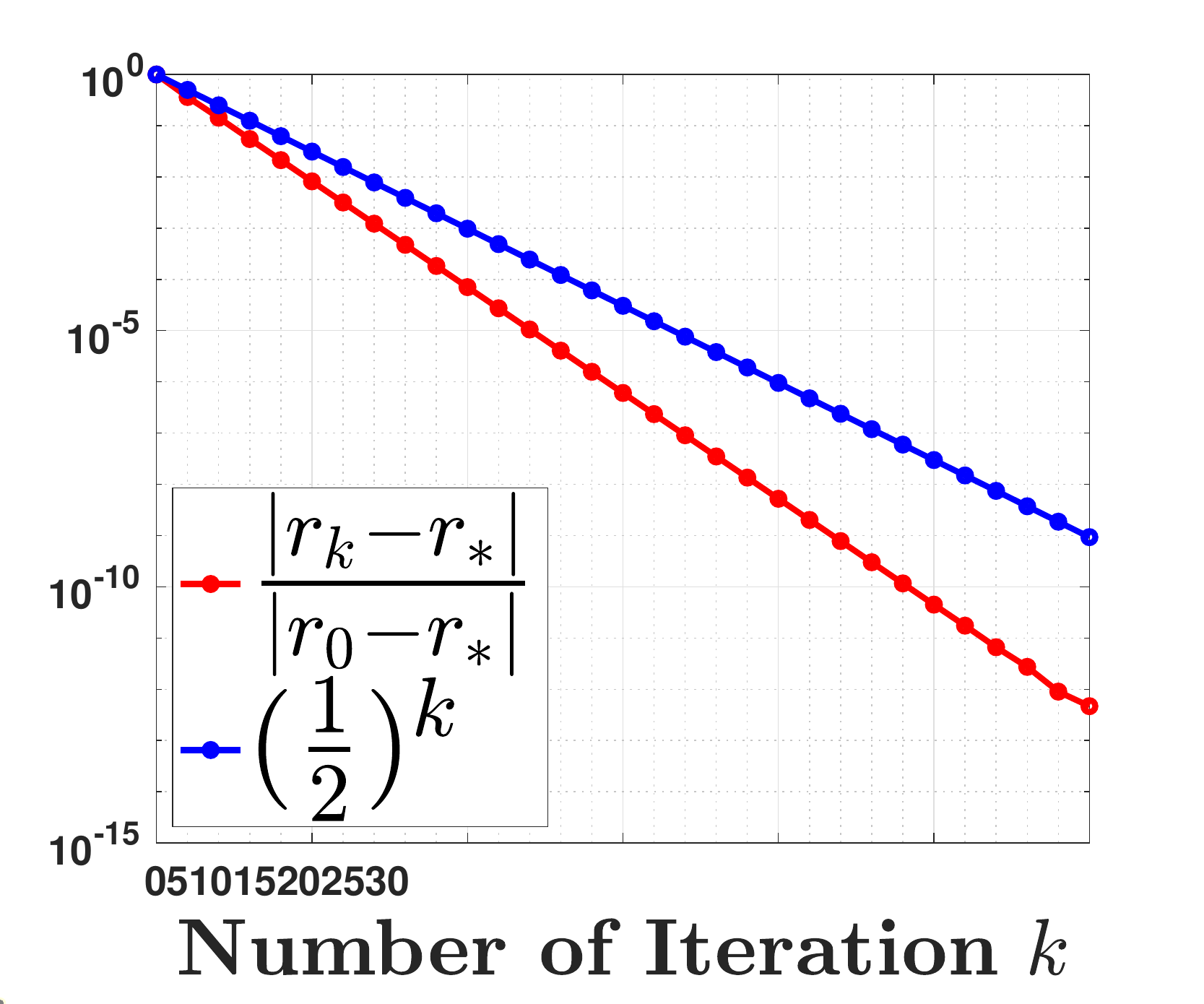}
    \caption{$q = 100$.}
\end{subfigure}
\caption{Convergence of factors $\{r_k\}_{k = 0}^{\infty}$ to $r_*$.}
\label{fig:factor}
\end{figure*}

We illustrate the convergence of factors $\{r_k\}_{k = 0}^{\infty}$ to the fixed point $r_*$ in Figure~\ref{fig:factor} for $q = 4$ and $q = 100$. In plots (a) and (b), we observe that $r_k$ becomes close to $r_*$ after only $5$ iterations. Hence, the linear convergence rate of BFGS is approximately $r_*$ after only a few iterations. We further observe in plots (c) and (d) that the factors $\{r_k\}_{k = 0}^{\infty}$ converge to the fixed point $r_*$ at a linear rate upper bounded by $1/2$. Note that $r_*$ is the solution of \eqref{fixed_point}. These plots verify our results in Theorem~\ref{theorem_2}.

\begin{remark}
The cost of computing the initial Hessian inverse approximation $H_0 = \nabla^{2}{f(\theta_0)}^{-1}$ is $\mathcal{O}(d^3)$, but this cost is only required for the first iteration, and it is not required for $k \geq 1$ as for those iterates  we update the Hessian inverse approximation matrix $H_k$ based on the update in \eqref{Hessian_update} at a cost of $\mathcal{O}(d^2)$.
\end{remark}

\begin{remark}
Although the vanilla Gradient Descent (GD) method converges at a sub-linear rate when applied to problem \eqref{opt_problem}, it has been shown that other first-order methods, such as Mirror Descent with a distance-generating function $h(x) = \frac{1}{q + 2}|x|^{q + 2} + \frac{1}{2}|x|^2$ can solve problem \eqref{opt_problem} at a linear  rate (refer to \cite{Mirror_Descent}). However, the linear convergence rate of Mirror Descent is dependent on the condition number $\kappa$ of the problem, characterized by the rate $\left(1 - \frac{1}{\kappa}\right)$. In contrast, as demonstrated by Theorems~\ref{theorem_1} and \ref{theorem_2}, the linear convergence rate of BFGS is independent of the problem's condition number. Consequently, in settings with low SNR, which can be ill-conditioned, we anticipate a faster convergence rate for BFGS compared to Mirror Descent. We illustrate this point in our numerical experiments presented in Figure~\ref{fig:population_mirror}.
\end{remark}

\subsection{Comparison with Newton's method}
Next, we compare the convergence results of BFGS for solving problem~\eqref{opt_problem} with the one for Newton's method.
The following theorem characterizes the global linear convergence of Newton's method with unit step size applied to the objective function in~\eqref{opt_problem}.

\begin{theorem}\label{theorem_3}
Consider applying Newton's method  to optimization problem \eqref{opt_problem} and suppose Assumptions \ref{ass_1} and \ref{ass_2} hold. Moreover, suppose the step size is $\eta_k = 1$ for any $k \geq 0$. Then, the iterates of Newton's method converge to the optimal solution $\hat{\theta}$ at a linear rate of
\begin{equation}
    \|\theta_{k} - \hat{\theta}\| = \frac{q - 2}{q - 1} \|\theta_{k - 1} - \hat{\theta}\|, \quad \forall k \geq 1.
\end{equation}
Moreover, this linear convergence rate $\frac{q - 2}{q - 1}$ is smaller than the fixed point $r_*$ defined in \eqref{fixed_point} of the BFGS quasi-Newton method, i.e., $\frac{q - 2}{q - 1} < r_* < \frac{2q - 3}{2q - 2}$ for all $ q \geq 4$.
\end{theorem}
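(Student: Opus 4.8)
The plan is to handle the two assertions separately: the exact linear rate $\frac{q-2}{q-1}$ follows from directly computing the Newton step for $f(\theta)=\|A\theta-b\|^q$, while the strict inequality $\frac{q-2}{q-1}<r_*$ follows from the monotonicity of the polynomial defining $r_*$ together with an elementary logarithmic estimate.

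For the first part, I would start by writing the residual $r_k := A\theta_k - b$ and computing
\[
\nabla f(\theta_k) = q\|r_k\|^{q-2}A^\top r_k, \qquad \nabla^2 f(\theta_k) = q\|r_k\|^{q-4}\Big[(q-2)(A^\top r_k)(A^\top r_k)^\top + \|r_k\|^2 A^\top A\Big].
\]
Assumption~\ref{ass_1} lets me substitute $b=A\hat\theta$, so that $r_k = A(\theta_k-\hat\theta)$ and $A^\top r_k = A^\top A(\theta_k-\hat\theta)$, while Assumption~\ref{ass_2} guarantees $A^\top A\succ 0$ and hence invertibility of the Hessian whenever $\theta_k\neq\hat\theta$. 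The Hessian is a rank-one update of $\|r_k\|^2 A^\top A$, so I would invert it by Sherman--Morrison. The crucial simplification is the identity $(A^\top r_k)^\top (A^\top A)^{-1}(A^\top r_k) = (\theta_k-\hat\theta)^\top A^\top A (\theta_k-\hat\theta) = \|r_k\|^2$, which makes the denominator of the Sherman--Morrison correction collapse to $q-1$. Carrying the multiplication through, the Newton direction telescopes to $(\nabla^2 f(\theta_k))^{-1}\nabla f(\theta_k) = \frac{1}{q-1}(\theta_k-\hat\theta)$. With unit step size this gives $\theta_{k+1}-\hat\theta = \frac{q-2}{q-1}(\theta_k-\hat\theta)$ exactly, and taking norms yields the claimed equality.

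For the comparison, I would set $h(r) := r^{q-1}+r^{q-2}-1$, which is strictly increasing on $(0,1)$ and vanishes precisely at the $r_*\in(0,1)$ supplied by Theorem~\ref{theorem_2}. Hence $\frac{q-2}{q-1}<r_*$ is equivalent to $h\big(\tfrac{q-2}{q-1}\big)<0$, i.e. to $t^{q-2}(1+t)<1$ with $t:=\frac{q-2}{q-1}=1-\frac{1}{q-1}$. Taking logarithms, it suffices to show $(q-2)\ln t + \ln(1+t)<0$. I would bound the two terms by the standard estimates $\ln(1-x)<-x$ (with $x=\frac{1}{q-1}$), which gives $(q-2)\ln t < -\frac{q-2}{q-1}=-t$, and $\ln(1+t)<t$ for $t>0$; adding these gives $(q-2)\ln t+\ln(1+t)<-t+t=0$, as desired, uniformly for all $q\geq 4$.

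The routine part is the Hessian computation, but the genuine crux is the Sherman--Morrison inversion in the first part: one must verify that the residual identity makes the rank-one correction collapse so that the Newton step lands exactly along $\theta_k-\hat\theta$ with coefficient $\frac{1}{q-1}$, since any arithmetic slip there destroys the clean equality. The comparison inequality is less delicate, though I would be careful to use the two-sided logarithmic bounds rather than a single crude estimate: the product $t^{q-2}(1+t)$ tends to $2/e<1$ as $q\to\infty$, but a looser bound may fail to certify the strict inequality for the smallest case $q=4$, which I would also verify directly as a sanity check.
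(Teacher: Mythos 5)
Your proof of the first claim is correct and follows essentially the same route as the paper: compute the gradient and Hessian of $f(\theta)=\|A\theta-b\|^{q}$, invert the Hessian via Sherman--Morrison (this is exactly the paper's Lemma~\ref{lemma_1}, where the identity $(A^\top r_k)^\top(A^\top A)^{-1}(A^\top r_k)=\|r_k\|^{2}$, valid because $r_k=A(\theta_k-\hat{\theta})$ under Assumption~\ref{ass_1}, collapses the rank-one correction's denominator to $q-1$), and conclude that the Newton direction equals $\frac{1}{q-1}(\theta_k-\hat{\theta})$, so the unit-step update contracts the error by exactly $\frac{q-2}{q-1}$ along a fixed line.

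Where your proposal genuinely adds value is the comparison $\frac{q-2}{q-1}<r_*$: the paper's appendix proof of Theorem~\ref{theorem_3} stops after establishing the rate and never actually proves this inequality, even though it is asserted in the theorem statement. Your argument closes that gap correctly. Since $h(r)=r^{q-1}+r^{q-2}-1$ is strictly increasing on $(0,1)$ with unique root $r_*$, the claim reduces to $t^{q-2}(1+t)<1$ for $t=\frac{q-2}{q-1}$; the strict bounds $(q-2)\ln\bigl(1-\frac{1}{q-1}\bigr)<-\frac{q-2}{q-1}=-t$ and $\ln(1+t)<t$ sum to a strictly negative quantity, uniformly for all $q\geq 4$, and your limiting check ($t^{q-2}(1+t)\to 2/e$ as $q\to\infty$) together with the explicit case $q=4$ (where $t^{q-2}(1+t)=20/27<1$) confirms there is no slack issue at either end. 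This is a clean, self-contained argument for a claim the paper leaves unproved.
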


The proof is available in Appendix~\ref{sec:proof_of_theorem_3}. The convergence results of Newton's method are also global without any line search method, and the linear rate $\frac{q - 2}{q - 1}$ is independent of dimension $d$ and condition number $\kappa_A$. Furthermore, the condition $\frac{q - 2}{q - 1} < r_*$ implies that iterates of Newton's method converge faster than BFGS, but the gap is not substantial as $r_* < \frac{2q - 3}{2q - 2}$. On the other hand, the computational cost per iteration of Newton's method is $\mathcal{O}(d^3)$ which is worse than the $\mathcal{O}(d^2)$ of BFGS.

Moving back to our main problem, one important implication of the above convergence results is that in the low SNR setting the iterates of  BFGS converge linearly to the optimal solution of the population loss function, while  the contraction coefficient of BFGS is comparable to that of Newton’s method which is $(2p-2)/(2p-1)$. For instance, for $p = 2, 3, 5, 10$, the linear rate contraction factor of Newton's method are $0.667, 0.8, 0.889, 0.947$ and the approximate linear rate contraction factor of BFGS denoted by $r_*$  are $0.755, 0.857, 0.922, 0.963$, respectively. 

\section{Convergence analysis in the low SNR regime: Finite sample setting}\label{sec:sample_loss}

Thus far, we have demonstrated that the BFGS iterates converge linearly to the true parameter $\theta^{*}$ when minimizing the population loss function $\mathcal{L}$ of the GLM in~\eqref{low_snr_pop} in the low SNR regime.  In this section, we study the statistical behavior of the BFGS iterates for the finite sample case by leveraging the insights developed in the previous section about the convergence rate of BFGS in the infinite sample case, i.e., population loss. More precisely, we focus on the application of BFGS for solving the least-square loss function $\mathcal{L}_{n}$ defined in \eqref{eq:sample_least_square} for the low SNR setting. The iterates of BFGS in this case follow the update rule
\begin{align}
    \theta_{k + 1}^{n} & = \theta_{k}^{n} - \eta_{k} H_{k}^{n} \nabla \mathcal{L}_{n}(\theta_{k}^{n}), \label{eq:BFGS_update_sample}
\end{align}
where $H_{k}^{n}$ is updated using the gradient information of the loss $\mathcal{L}_{n}$ by the BFGS rule. 

We next show that the BFGS iterates~\eqref{eq:BFGS_update_sample} $\{\theta_{k}^{n}\}_{k \geq 0}$ converge to the final statistical radius within a logarithmic number of iterations under the low SNR regime of the GLMs. To prove this claim, we track the difference between the iterates $\{\theta_{k}^n\}_{k\geq 0}$ generated based on the empirical loss and the iterates $\{\theta_k\}_{k\geq 0}$ generated according to the population loss. Assuming that they both start from the same initialization $\theta_0$, with the concentration of the gradient $\|\nabla \mathcal{L}_n(\theta) - \nabla \mathcal{L}(\theta)\|$ and the Hessian $\|\nabla^2 \mathcal{L}_n(\theta) - \nabla^2 \mathcal{L}(\theta)\|_{\mathrm{op}}$ from \citet{mou2019diffusion, ren2022improving} we control the deviation between these two sequences. Using this bound and the convergence results of the iterates generated based on the population loss discussed in the previous section, we prove the following result for the finite sample setting. The proof is available in Appendix~\ref{sec:proof_statistical_rate_BFGS}.

\begin{theorem}
\label{theorem:statistical_rate_BFGS} 
Consider the low SNR regime of the GLM in~\eqref{eq:generalized_linear_model} namely, $\|\theta^{*}\| \leq C_{1} (d/n)^{1/(2p)}$. Apply the BFGS method to the empirical loss \eqref{eq:sample_least_square} with the initial point $\theta^n_0$, where $\theta^n_0 \in \mathbb{B}(\theta^*, r)$ for some $r > 0$, the initial Hessian inverse approximation matrix as $H_0 = \nabla^{2}{\mathcal{L}_{n}(\theta^n_0)}^{-1}$, where $\nabla^{2}{\mathcal{L}_{n}(\theta^n_0)}$ is positive definite and step size $\eta_k = 1$. For any failure probability $\delta \in (0, 1)$, if the number of samples is  $n \geq C_{2}(d\log(d/\delta))^{2p}$, and the number of iterations satisfies $T \geq C_3\log(n/d(\log (1/\delta)))$, then with probability $1 - \delta$, we have
\begin{equation}
    \min_{t\in [T]}\|\theta_{t}^{n} - \theta^{*}\| \leq C_{4} \left(\frac{d\log(1/\delta)}{n}\right)^{\frac{1}{2p+2}},
\end{equation} 
where $C_{1}$, $C_2$, $C_3$, and $C_4$ are constants independent of $n$ and $d$.
\end{theorem}
Our analysis hinges on linking the gradient and Hessian of the empirical loss to the population loss, subsequently demonstrating a convergence rate for the empirical loss based on the linear convergence established in Section~\ref{sec:population_loss} for the population loss. More details can be found in Appendix~\ref{sec:proof_statistical_rate_BFGS}. Theorem~\ref{theorem:statistical_rate_BFGS} shows that BFGS achieves an estimation accuracy of $\mathcal{O}((d/n)^{\frac{1}{2p+2}})$in  $O(\log n)$ iterations, which is substantialy faster than GD that requires $O(n^{\frac{p-1}{p}})$ iteations to achieve the same guarantee as shown in \citep{Polyak_GD}. A few comments about Theorem~\ref{theorem:statistical_rate_BFGS} are in order.

\textbf{Comparison to GD, GD with Polyak step size, and Newton's method:} Theorem~\ref{theorem:statistical_rate_BFGS} indicates that under the low SNR regime, the BFGS iterates reach the final statistical radius $\mathcal{O}(n^{-1/(2p+2)})$ within the true parameter $\theta^{*}$ after $\mathcal{O}(\log(n))$ number of iterations. {The statistical radius is slightly worse than the optimal statistical radius $O(n^{-1/(2p)})$. However, we conjecture that this is due to the proof technique and BFGS can still reach the optimal $O(n^{-1/(2p)})$ in practice. In our experiments, in the next section, we observe that when  $d = 4$ and $p=2$, the statistical radius of BFGS is closer to the optimal radius of $O(n^{-1/4})$ instead of $O(n^{-1/6})$ suggested by our analysis. We leave an improvement of the statistical analysis as the future work.} On the other hand, the overall iteration complexity of BFGS, which is  $\mathcal{O}(\log(n))$, is indeed better than the polynomial number of iterations of GD, which is at the order of $\mathcal{O}(n^{(p - 1)/p})$ (Corollary 3 in~\citep{Ho_Instability}). 

Moreover, the complexity of BFGS is better than the one for GD with Polyak step size which is  $\mathcal{O}(\kappa \log(n))$ iterations (Corollary 1 in~\citep{Polyak_GD}), where $\kappa$ is the condition number of the covariance matrix $\Sigma$. Note that while the iteration complexity of BFGS is comparable to that of GD with  Polyak step size in terms of the sample size, the BFGS overcomes the need to approximate the optimal value of the sample least-square loss $\mathcal{L}_{n}$, which can be unstable in practice, and also removes the dependency on the condition number that appears in the complexity bound of GD with Polyak step size. A similar conclusion also holds for mirror descent with a distance-generating function $h(x) = \frac{1}{q + 2}|x|^{q + 2} + \frac{1}{2}|x|^2$, as its linear convergence rate has a contraction factor of $(1-1/\kappa)$, leading to an overall iteration complexity of $\mathcal{O}(\kappa \log(n))$.

Finally, the iteration complexity of BFGS is comparable to the $\mathcal{O}(\log(n))$ of Newton's method (Corollary 3 in~\citep{Ho_Instability}), while per iteration cost of BFGS is substantially lower than  Newton's method. 

\textbf{On the minimum number of iterations:} The results in Theorem~\ref{theorem:statistical_rate_BFGS} involve the minimum number of iterations, namely, this result holds for some $1 \leq t \leq T$. It suggests that the BFGS iterates may diverge after they reach the final statistical radius. As highlighted in~\citep{Ho_Instability}, such instability behavior of BFGS is inherent to fast and unstable methods. While it may sound limited, this issue can be handled via an early stopping scheme using the cross-validation approaches. We illustrate such early stopping of the BFGS iterates for the low SNR regime in Figure~\ref{fig:empirical_opt}.

\section{Numerical experiments}\label{sec:numerical_experiments}

Our experiments are divided into two sections: the first focuses on iterative methods' behavior on the population loss of GLMs with polynomial link functions, and the second examines the finite sample setting.

\begin{figure*}[t!]
\centering
\begin{subfigure}{0.24\textwidth}
    \includegraphics[width=\textwidth]{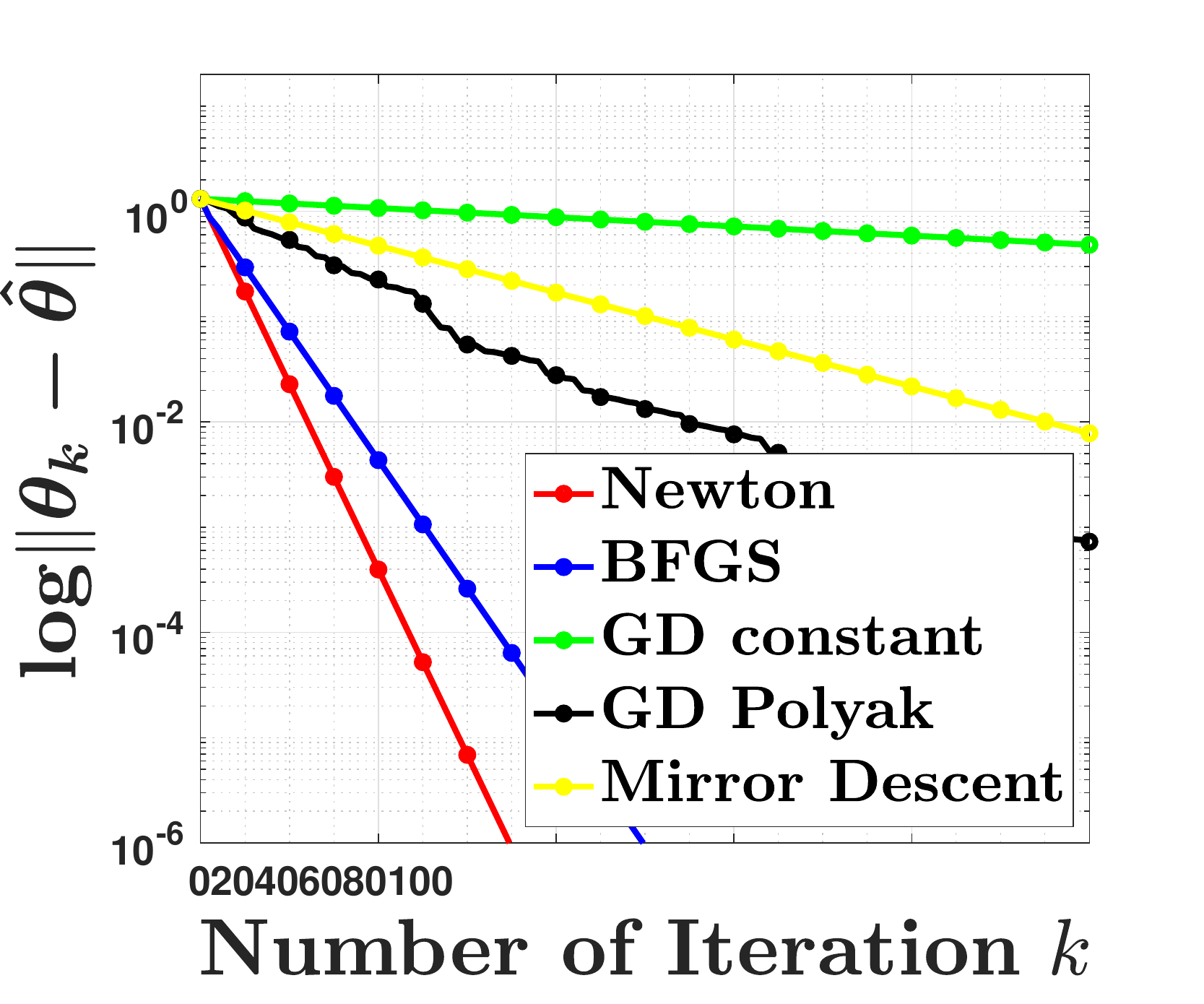}
    \caption{$d=10,q=4$.}
\end{subfigure}
\hfill
\begin{subfigure}{0.24\textwidth}
    \includegraphics[width=\textwidth]{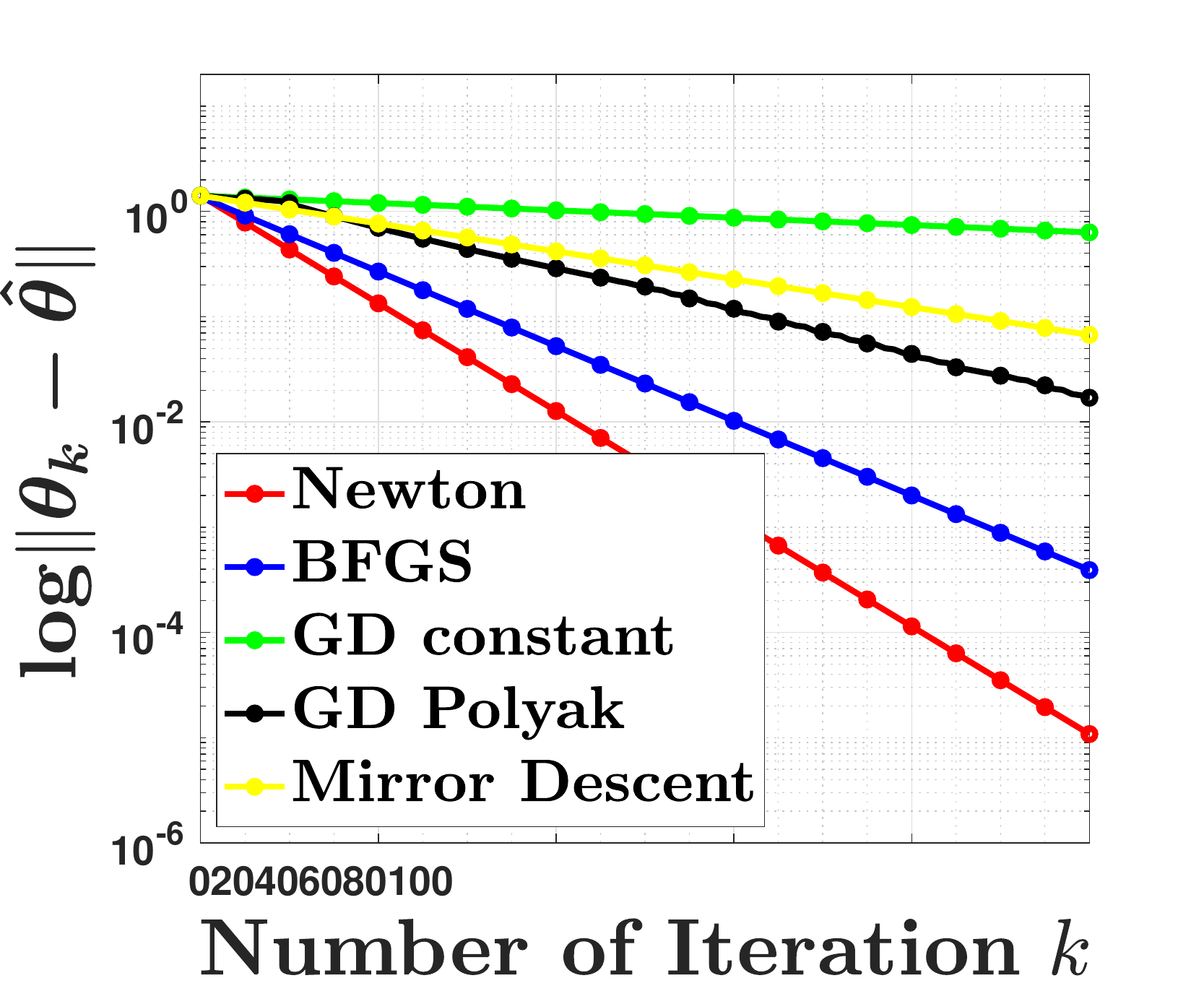}
    \caption{$d=10,q=10$.}
\end{subfigure}
\hfill
\begin{subfigure}{0.24\textwidth}
    \includegraphics[width=\textwidth]{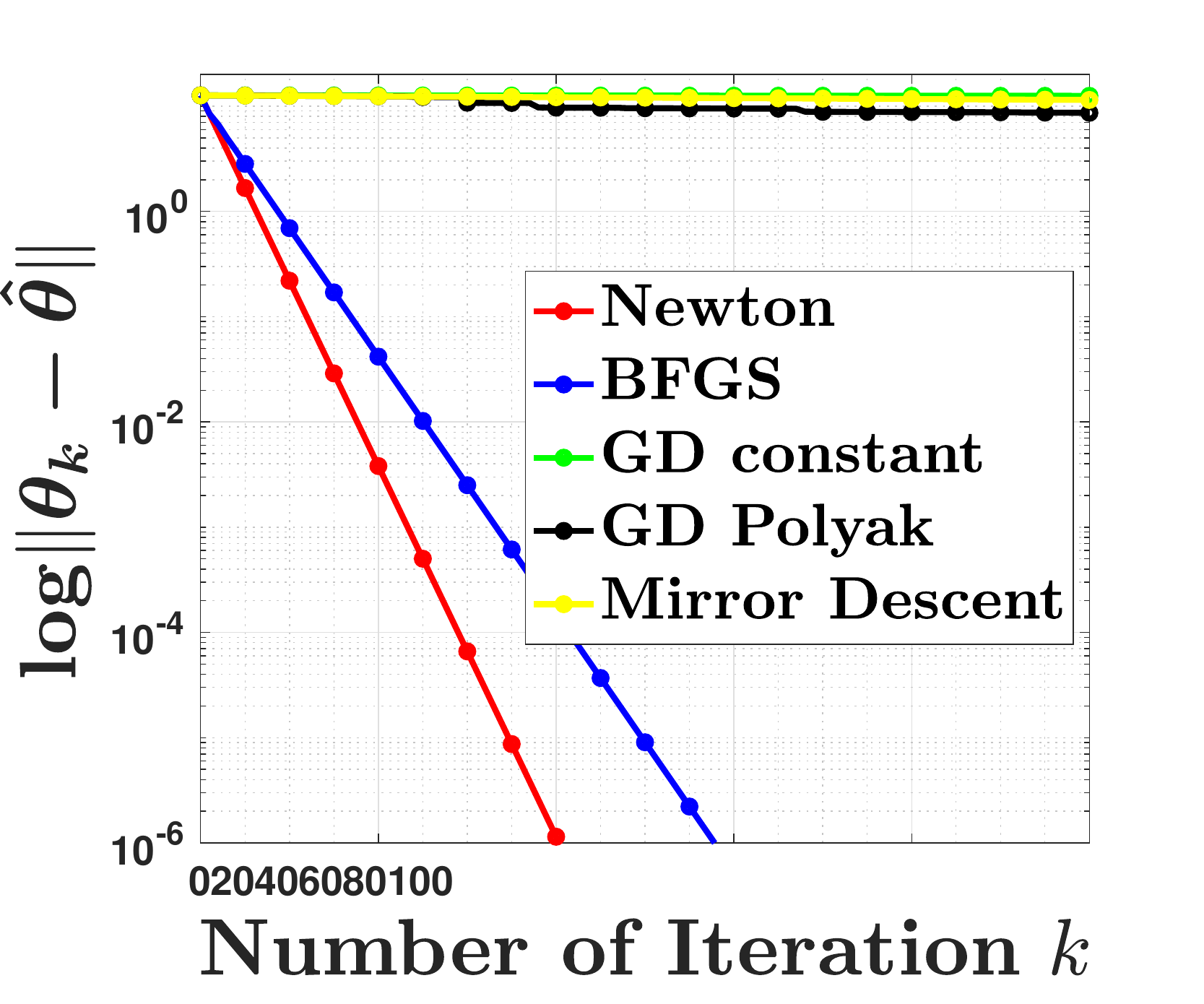}
    \caption{$d=10^3,q=4$.}
\end{subfigure}
\begin{subfigure}{0.24\textwidth}
    \includegraphics[width=\textwidth]{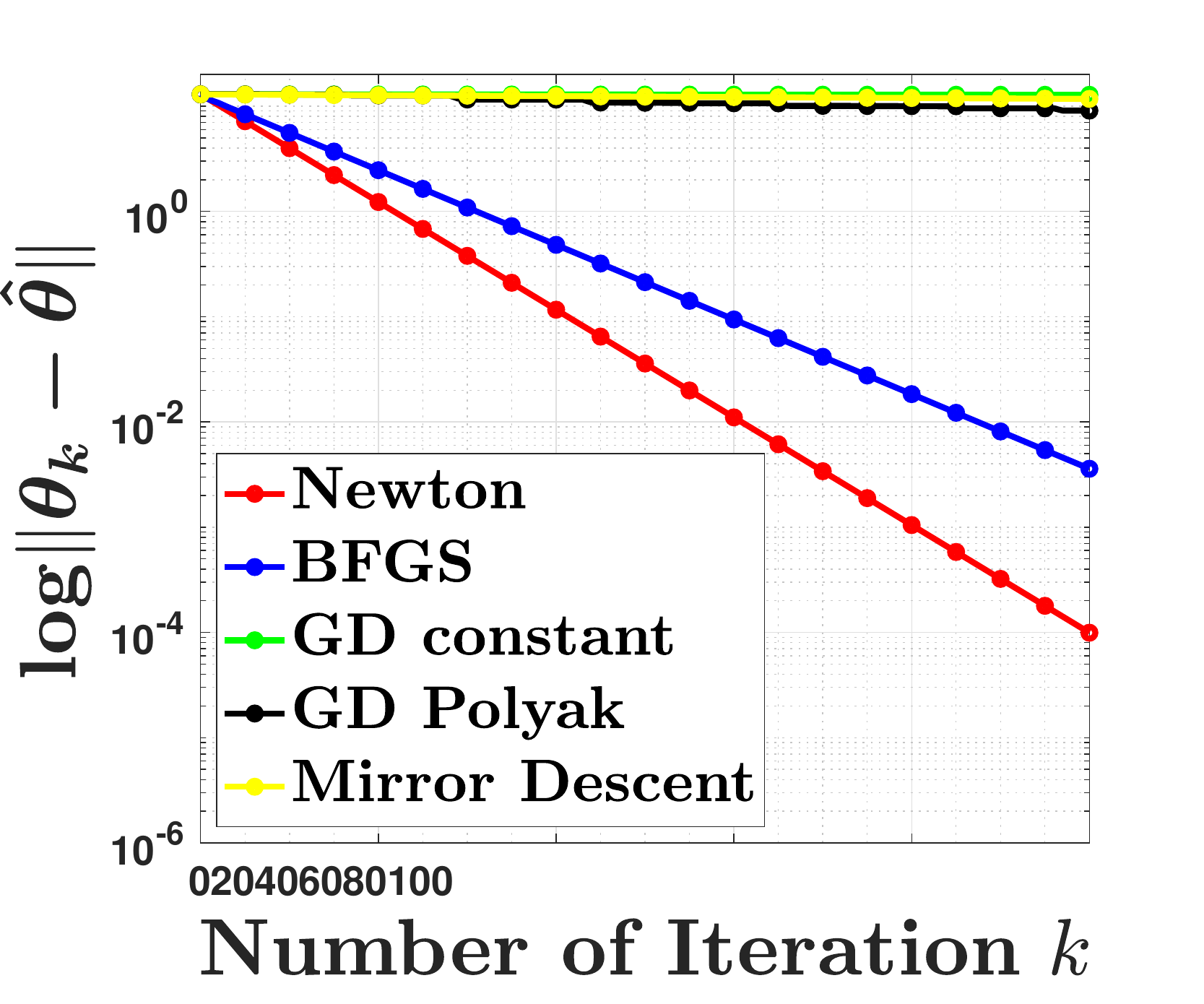}
    \caption{$d=10^3,q=10$.}
\end{subfigure}
\caption{Convergence of Newton's method, BFGS, GD with constant step size, GD with Polyak step size and mirror descent with constant step size for different values of $d$ and $q$. In plot (a), $m = 100$ and $\eta = 10^{-4}$. In plot (b), $m = 100$ and $\eta = 10^{-8}$. In plot (c), $m = 2000$ and $\eta = 10^{-12}$. In plot (d), $m = 2000$ and $\eta = 10^{-15}$.}
\label{fig:population_mirror}
\end{figure*}

\noindent \textbf{Experiments for the population loss function.}
In this section, we compare the performance of Newton's method, BFGS, GD with constant step size, GD with Polyak step size, and and mirror descent with constant step size and distance generating function $h(x) = \frac{1}{q + 2}\|x\|^{q + 2} + \frac{1}{2}\|x\|^2$ applied to \eqref{opt_problem} which  corresponds to the population loss. We choose different values of parameter $m$, dimension $d$ and the exponential parameter $q$ in \eqref{opt_problem}. We generate a random matrix $A \in \mathbb{R}^{m \times d}$ and a random vector $\hat{\theta} \in \mathbb{R}^d$, and compute the vector $b = A\hat{\theta} \in \mathbb{R}^d$. The initial point $\theta_0 \in \mathbb{R}^{d}$ is also generated randomly. 
To properly select the steps for GD with a fixed step size $\eta$, we employed a manual grid search across the following values $[10^{-10}, 10^{-9}, ..., 10^{-1},1]$ and selected the value that yielded the best performance for each specific problem. 
In our plots, we present the logarithm of error $\|\theta_k - \hat{\theta}\|$ versus the number of iteration $k$ for different algorithms. All the values of different parameters $m$, $d$, $q$ and $\eta$ are mentioned in the caption of the plots. 

\begin{figure*}[t!]
\centering
\begin{subfigure}{0.24\textwidth}
    \includegraphics[width=\textwidth]{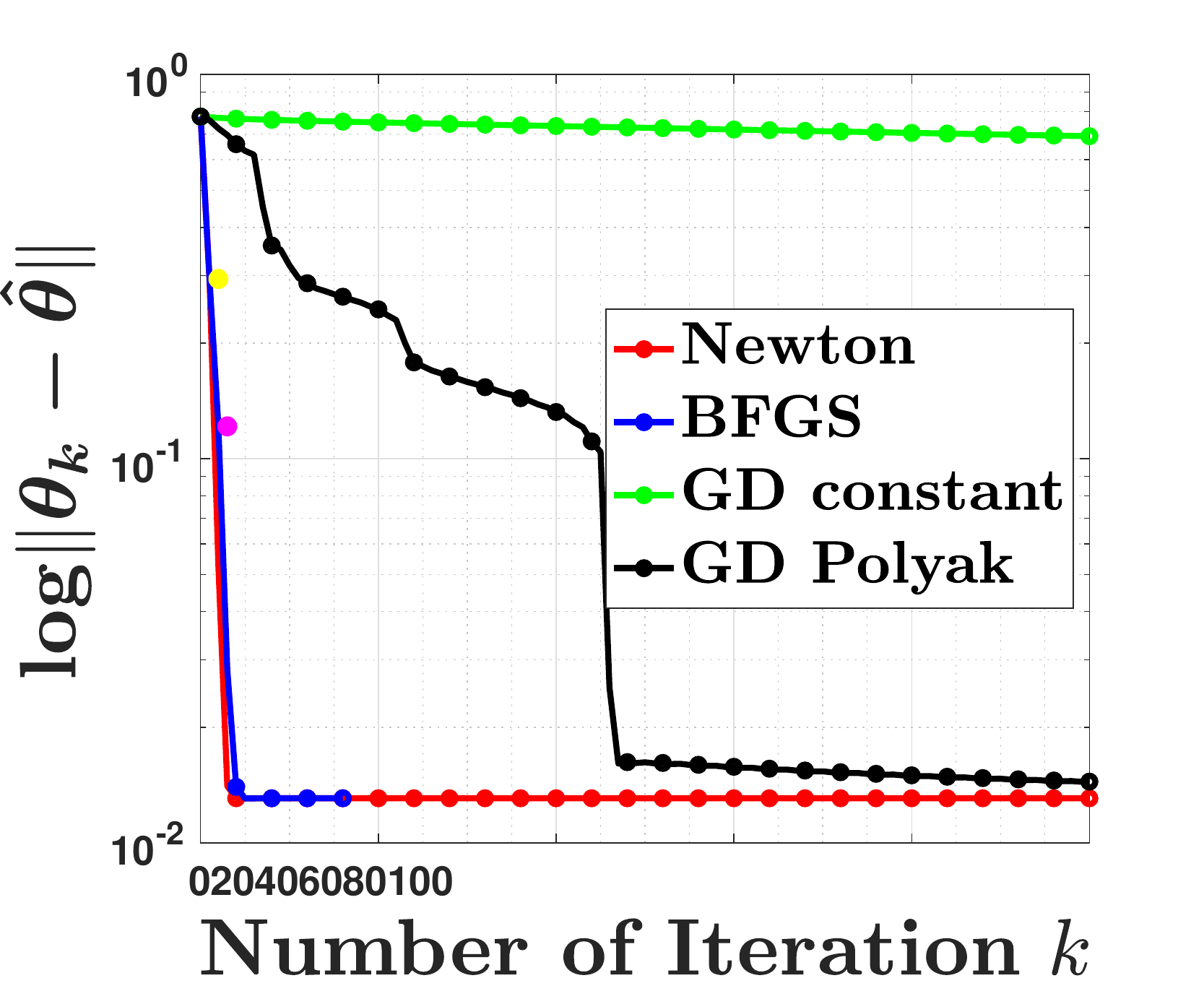}
    \caption{High SNR regime.}
\end{subfigure}
\hfill
\begin{subfigure}{0.24\textwidth}
    \includegraphics[width=\textwidth]{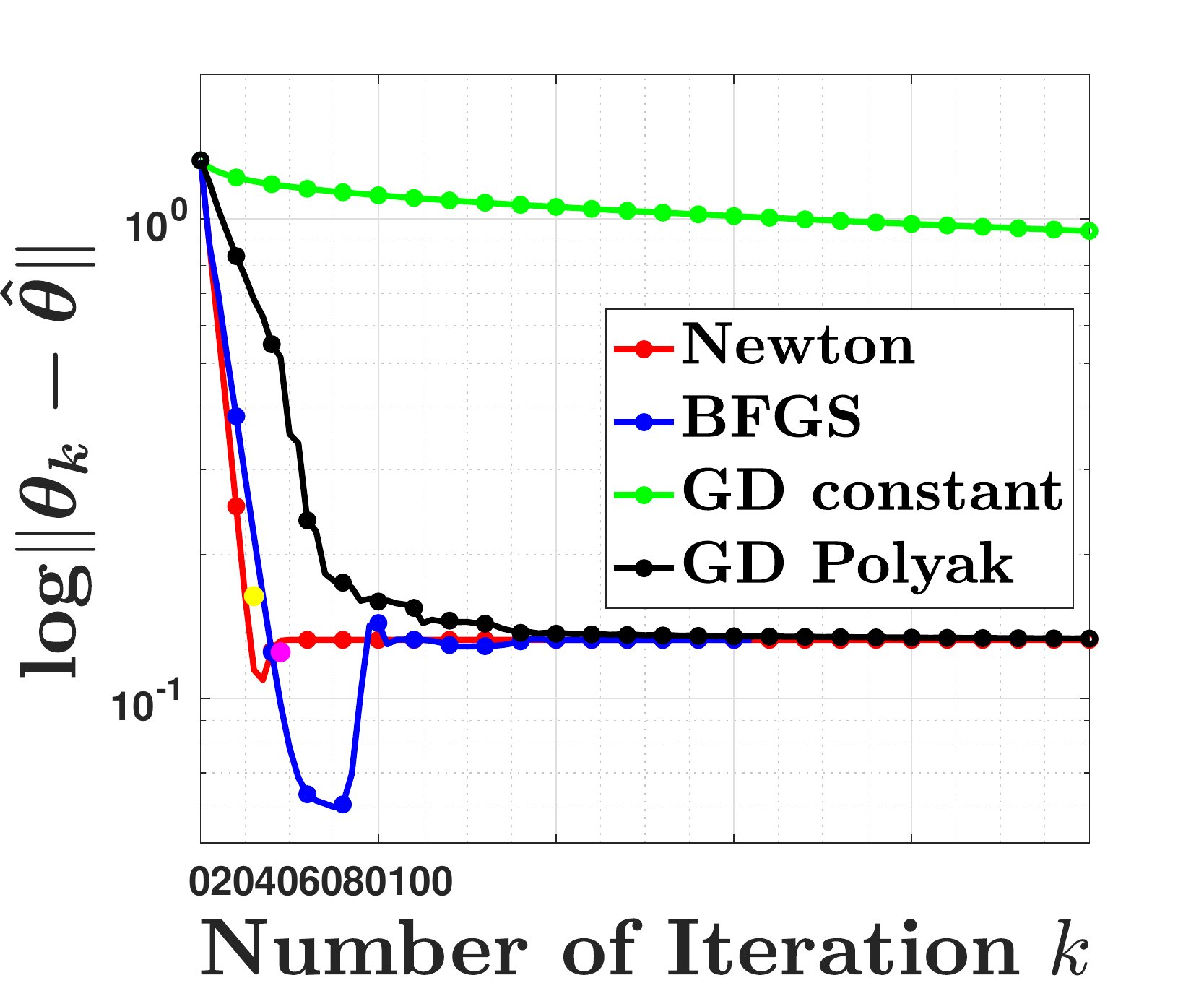}
    \caption{Low SNR regime.}
\end{subfigure}
\hfill
\begin{subfigure}{0.24\textwidth}
    \includegraphics[width=\textwidth]{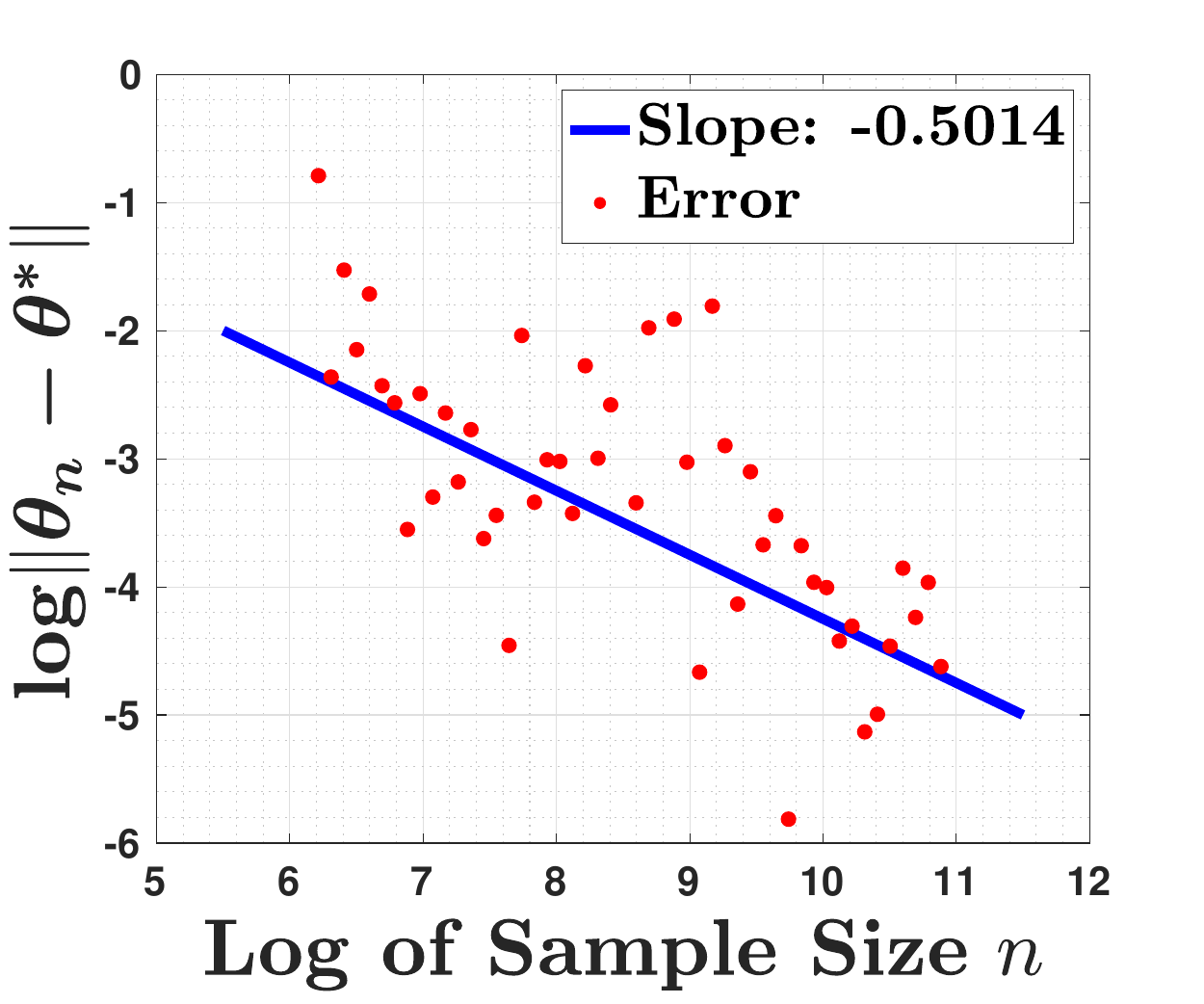}
    \caption{High SNR regime.}
\end{subfigure}
\begin{subfigure}{0.24\textwidth}
    \includegraphics[width=\textwidth]{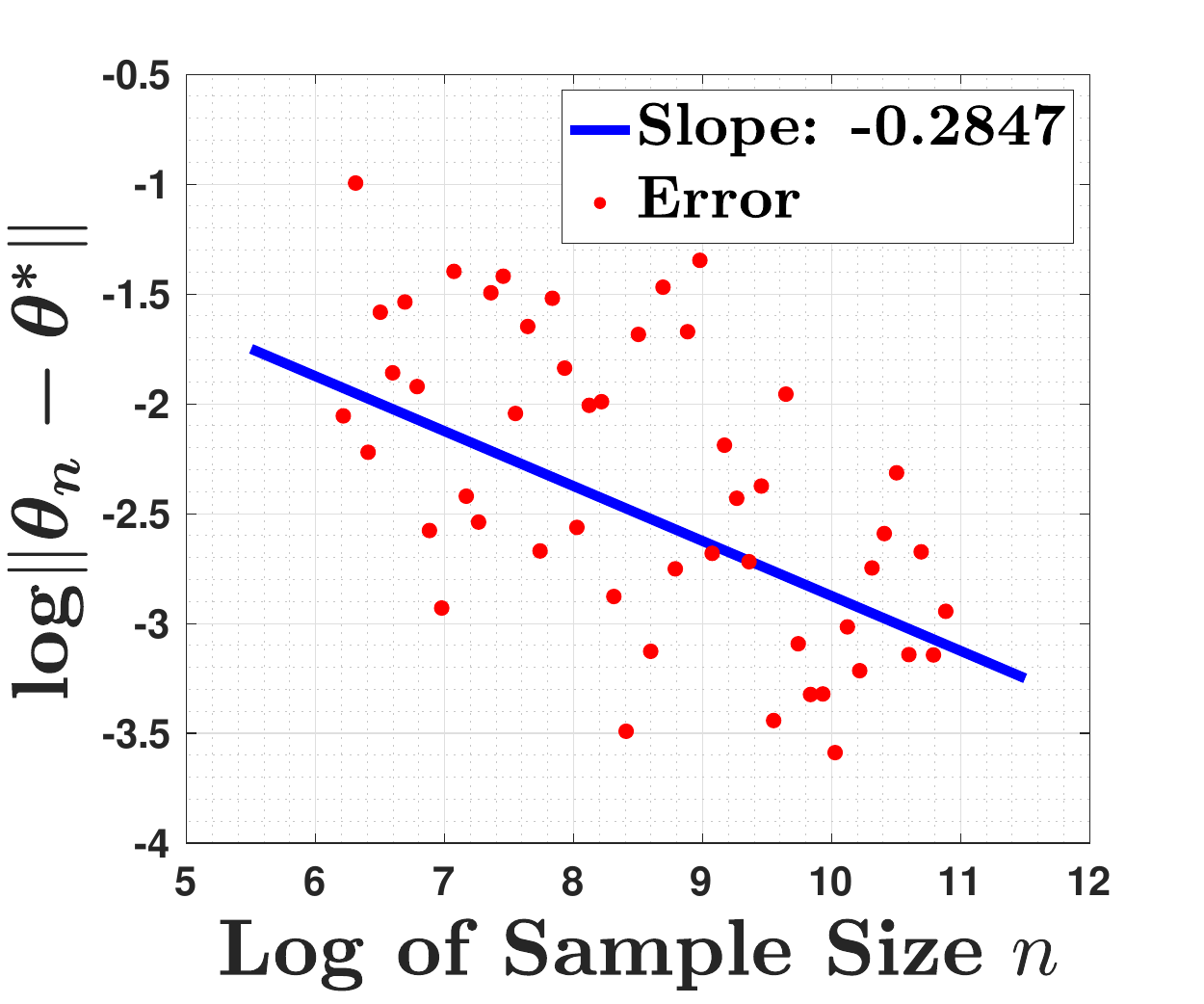}
    \caption{Low SNR regime.}
\end{subfigure}
\caption{Convergence of different methods when $d=4$ in high SNR (a) and low SNR (b) regimes. Illustration of the statistical radius of BFGS in high SNR (c) and low SNR (d) regimes.}
\label{fig:empirical_opt}
\end{figure*}

In Figure~\ref{fig:population_mirror}, we observe that GD with constant step converges slowly due to its sub-linear convergence rate. The performance of GD with Polyak step size is also poor when dimension is large or the parameter $q$ is huge. This is due to the fact that as dimension increases the problem becomes more ill-conditioned and hence the linear convergence contraction factor approaches $1$. We observe that both Newton's method and BFGS generate iterations with linear convergence rates, and their linear convergence rates are only affected by the parameter $q$, i.e., the dimension $d$ has no impact over the performance of BFGS and Newton's method. Although the convergence speed of Newton's method is faster than BFGS, their gap is not significantly large as we expected from our theoretical results in Section~\ref{sec:population_loss}.  We also observe that  mirror descent outperforms GD, but its performance is not as good as the BFGS method. 

\noindent \textbf{Experiments for the empirical loss function.}
We next study the statistical and computational complexities of BFGS on the empirical loss. In our experiments, we first consider the case that $d = 4$ and the power of the link function is $p = 2$, namely, we consider the multivariate setting of the phase retrieval problem. The data is generated by first sampling the inputs according to $\{X_i\}_{i=1}^n\sim \mathcal{N}(0, \mathrm{diag}(\sigma_1^2, \cdots, \sigma_4^2))$ where $\sigma_k = (0.5)^{k-1}$, and then generating their labels based on $Y_i = (X_i^\top \theta^*)^2 + \zeta_i$ where $\{\zeta_i\}_{i=1}^n$ are i.i.d. samples from $\mathcal{N}(0, 0.01)$. In the low SNR regime, we set $\theta^* = 0$,  and  in the high SNR regime we select $\theta^*$ uniformly at random from the unit sphere. Further, for GD, we set the step size as $\eta=0.1$, while for Newton's method and BFGS, we use the unit step size $\eta=1$. 

In plots (a) and (b) of Figure~\ref{fig:empirical_opt}, we consider the setting that the sample size is $n = 10^4$, and we run GD, GD with Polyak step size, BFGS, and Newton's method to find the optimal solution of the sample least-square loss $\mathcal{L}_{n}$. Furthermore, for both Newton's method and the BFGS algorithm, due to their instability, we also perform cross-validation to choose their early stopping. In particular, we split the data into training and the test sets. The training set consists of $90\%$ of the data while the test set has $10\%$ of the data. The yellow points in plots (a) and (b) of Figure~\ref{fig:empirical_opt} show the iterates of BFGS and Newton, respectively, with the minimum validation loss. As we observe, under the low SNR regime, the iterates of  GD with Polyak step size, BFGS and Newton's method converge geometrically fast to the final statistical radius while those of the GD converge slowly to that radius. Under the high SNR regime, the iterates of all of these methods converge geometrically fast to the final statistical radius. The faster convergence of GD with Polyak step size  over GD is due to the optimal Polyak step size, while the faster convergence of BFGS and Newton's method over GD is due to their independence on the problem condition number. Finally, in plots (c) and (d) of Figure~\ref{fig:empirical_opt}, we run BFGS when the sample size is ranging from $10^2$ to $10^4$ to empirically verify the statistical radius of these methods. As indicated in the plots of that figure, under the high SNR regime, the BFGS has statistical radius is $\mathcal{O}(n^{-1/2})$, while under the low SNR regime, its statistical radius becomes $\mathcal{O}(n^{-1/4})$.

\begin{figure*}[t!]
\centering
\begin{subfigure}{0.24\textwidth}
    \includegraphics[width=\textwidth]{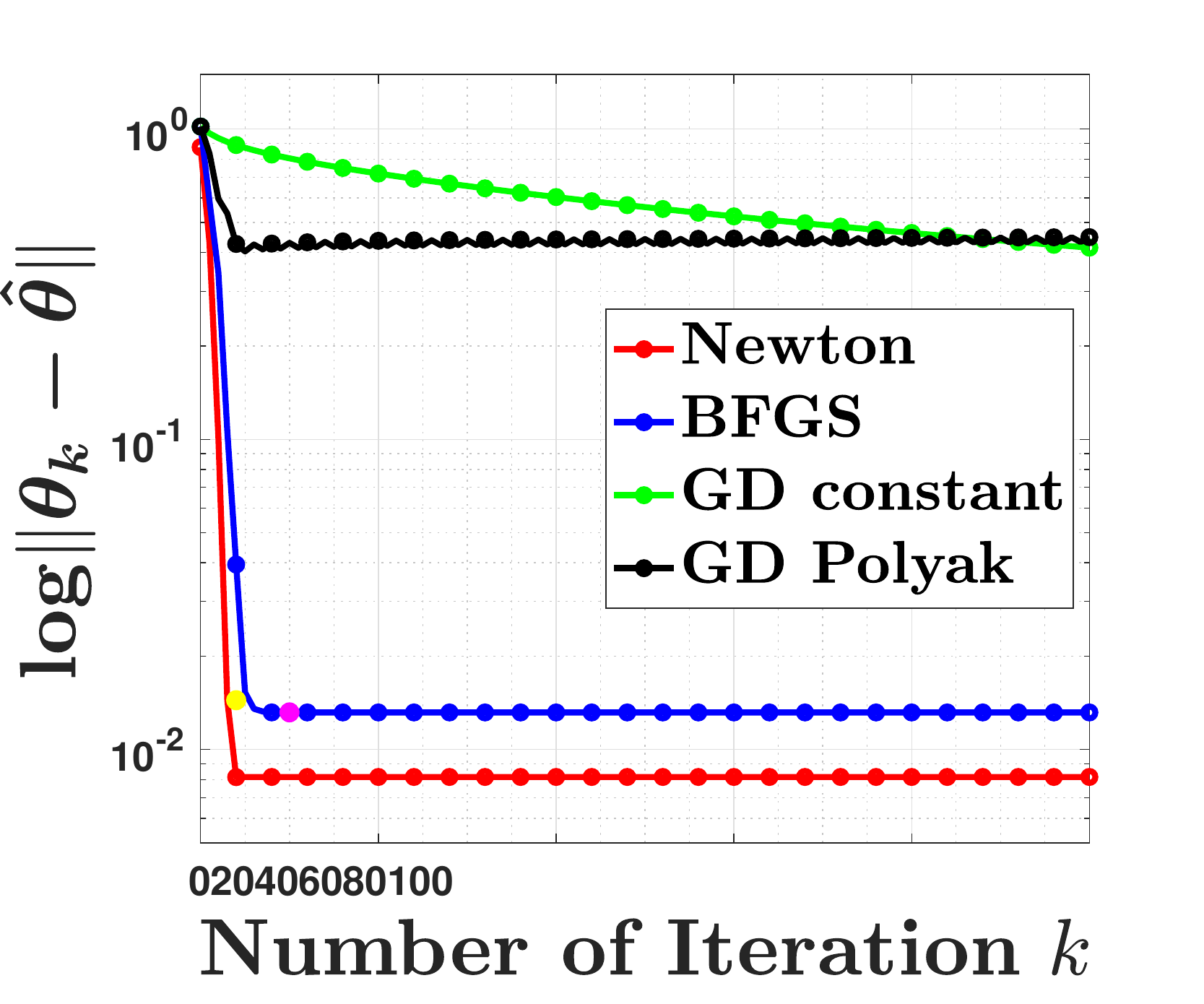}
    \caption{High SNR regime.}
\end{subfigure}
\hfill
\begin{subfigure}{0.24\textwidth}
    \includegraphics[width=\textwidth]{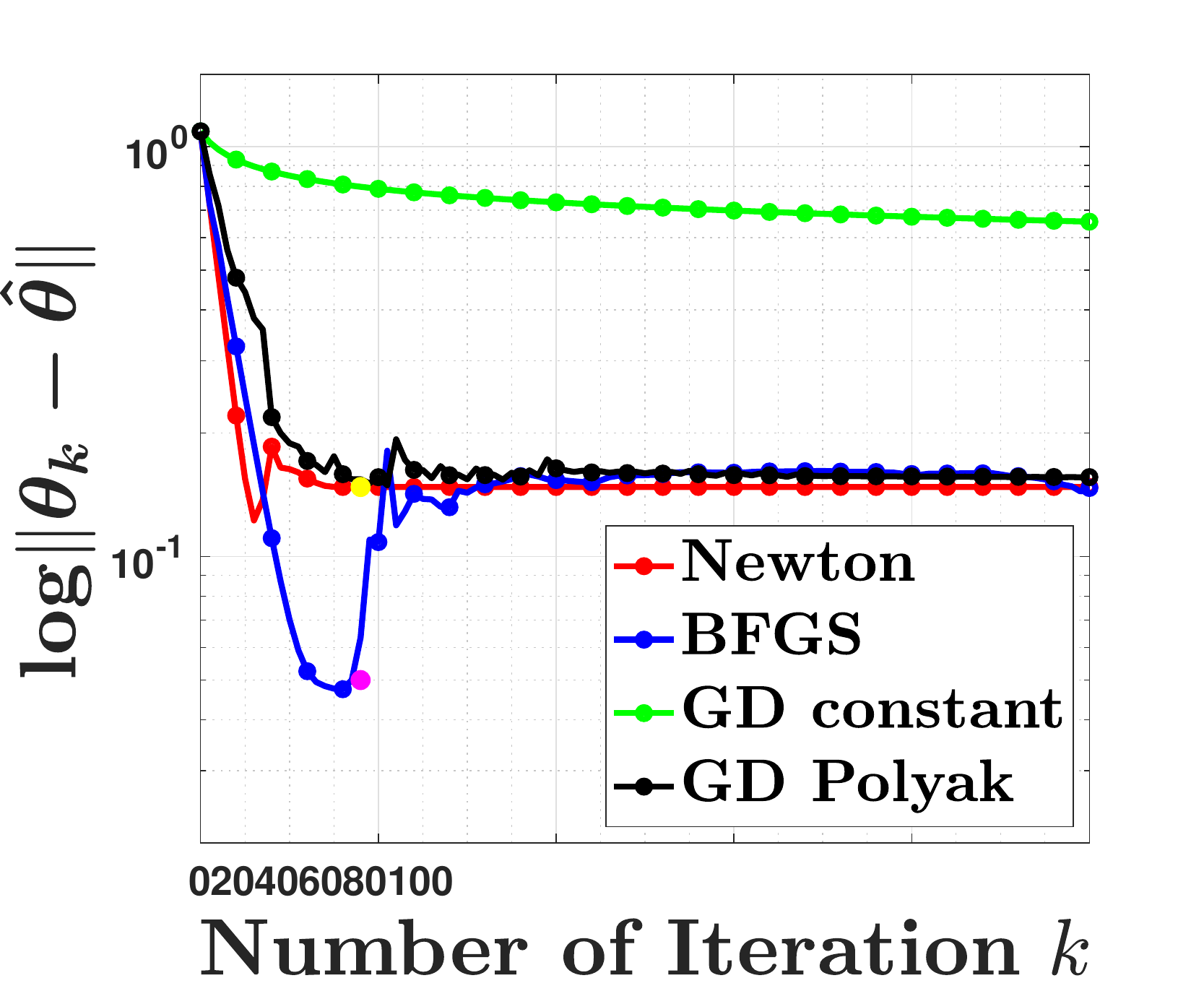}
    \caption{Low SNR regime.}
\end{subfigure}
\hfill
\begin{subfigure}{0.24\textwidth}
    \includegraphics[width=\textwidth]{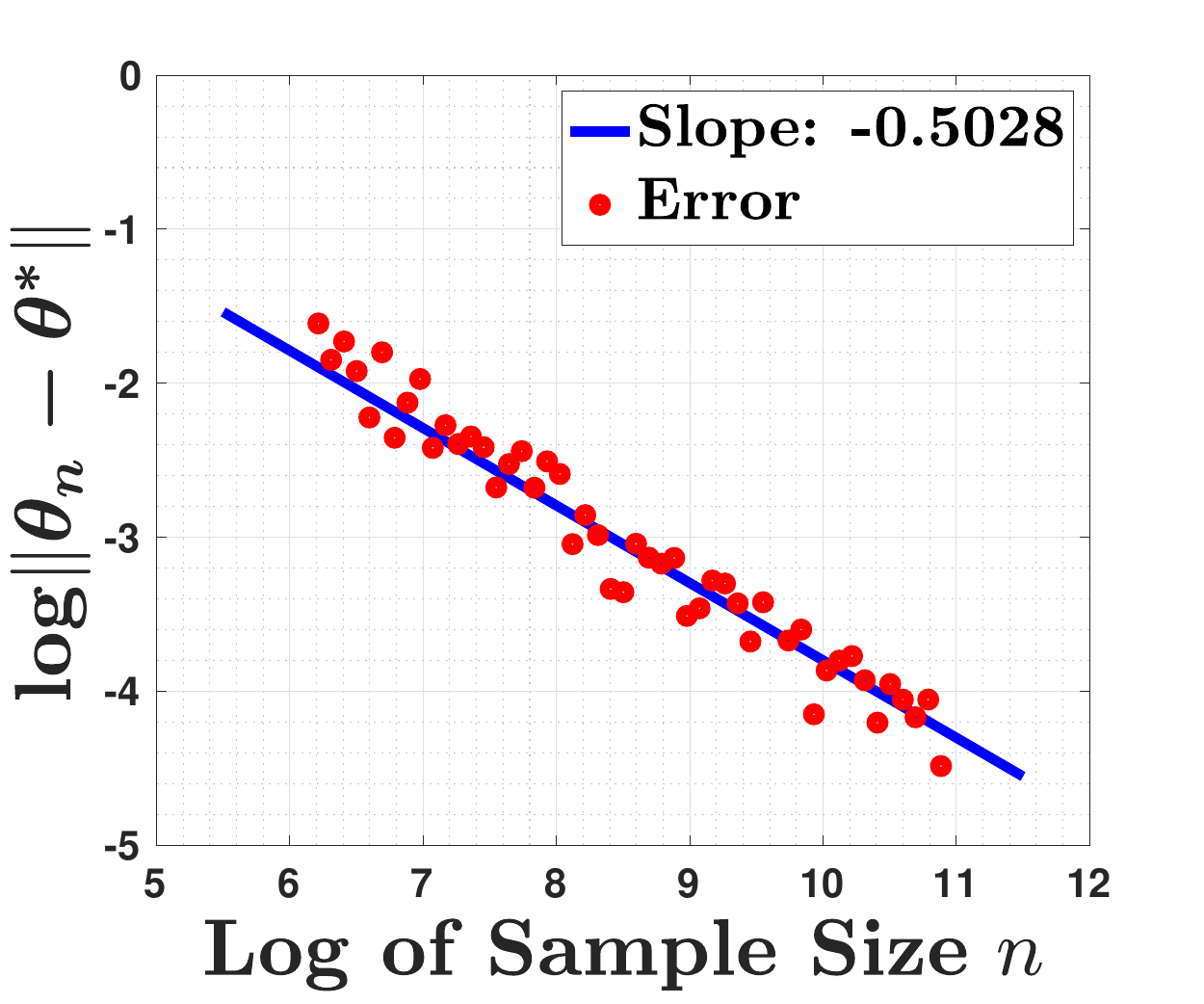}
    \caption{High SNR regime.}
\end{subfigure}
\begin{subfigure}{0.24\textwidth}
    \includegraphics[width=\textwidth]{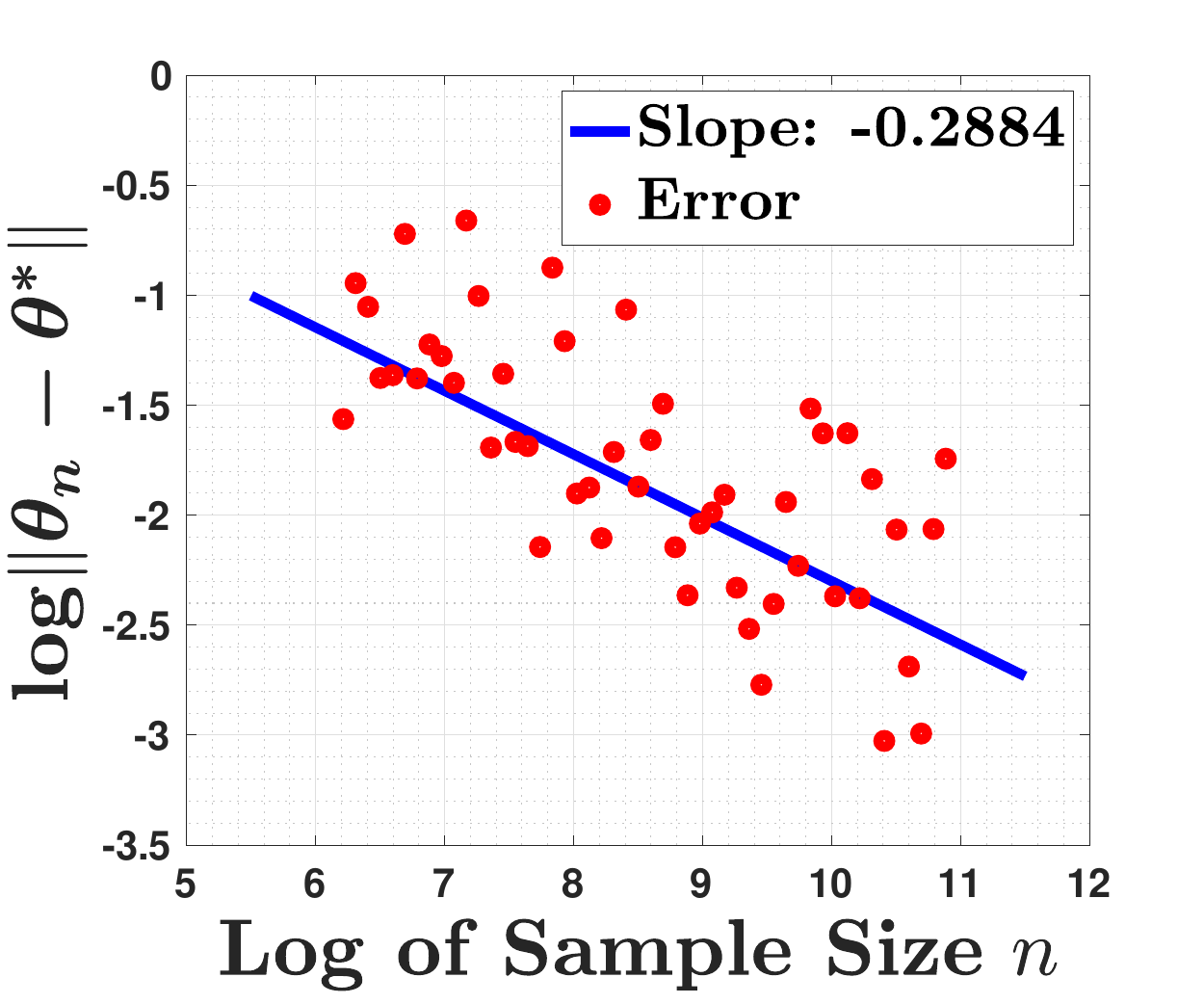}
    \caption{Low SNR regime.}
\end{subfigure}
\caption{Convergence of different methods $d = 50$ in high SNR  (a) and low SNR (b) regimes. Statistical radius of BFGS in high SNR (c) and low SNR (d) settings.}
\label{fig:empirical_opt_high_d_1}
\end{figure*}

\begin{figure*}[t!]
\centering
\begin{subfigure}{0.24\textwidth}
    \includegraphics[width=\textwidth]{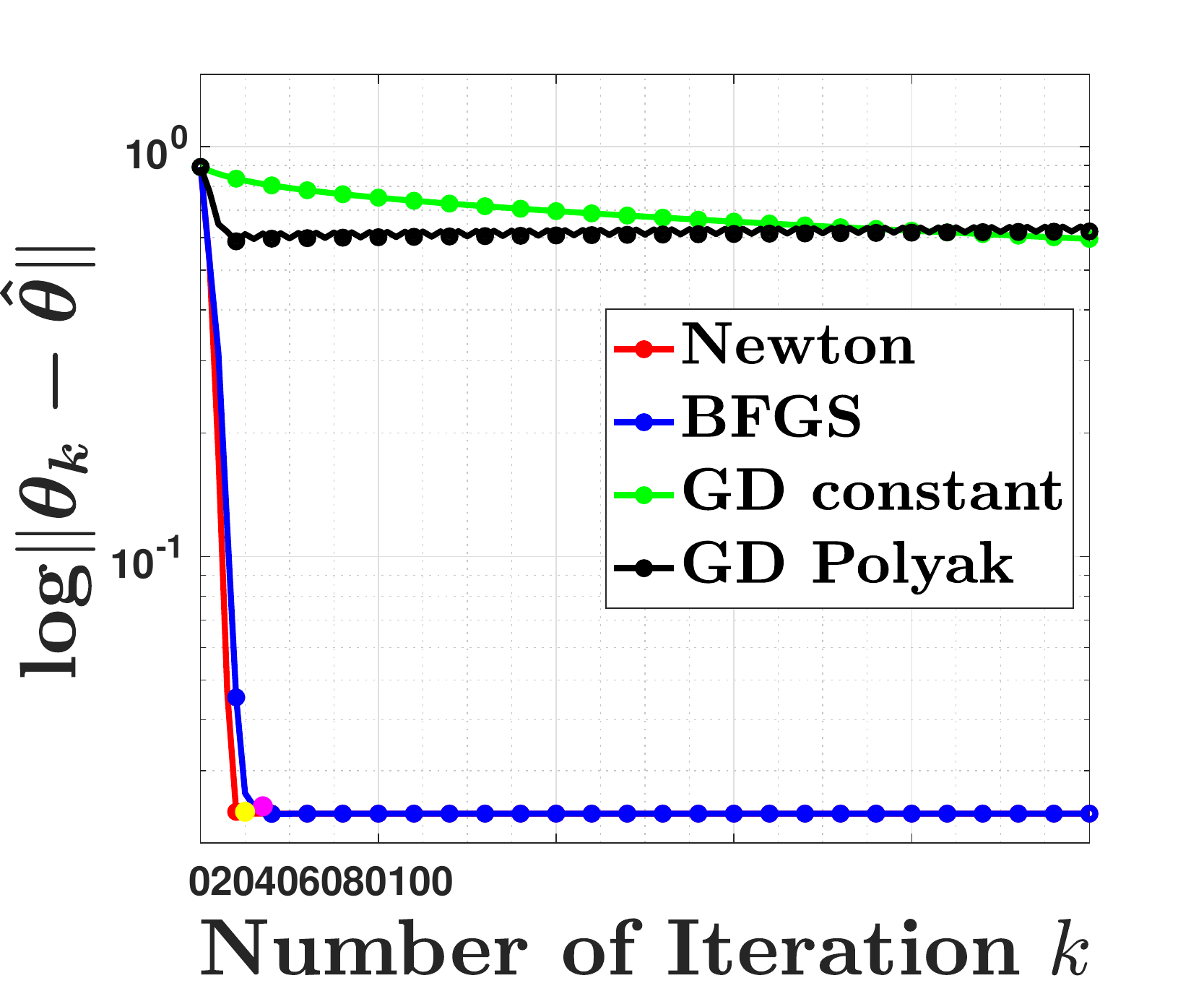}
    \caption{High SNR (d = 100).}
\end{subfigure}
\hfill
\begin{subfigure}{0.24\textwidth}
    \includegraphics[width=\textwidth]{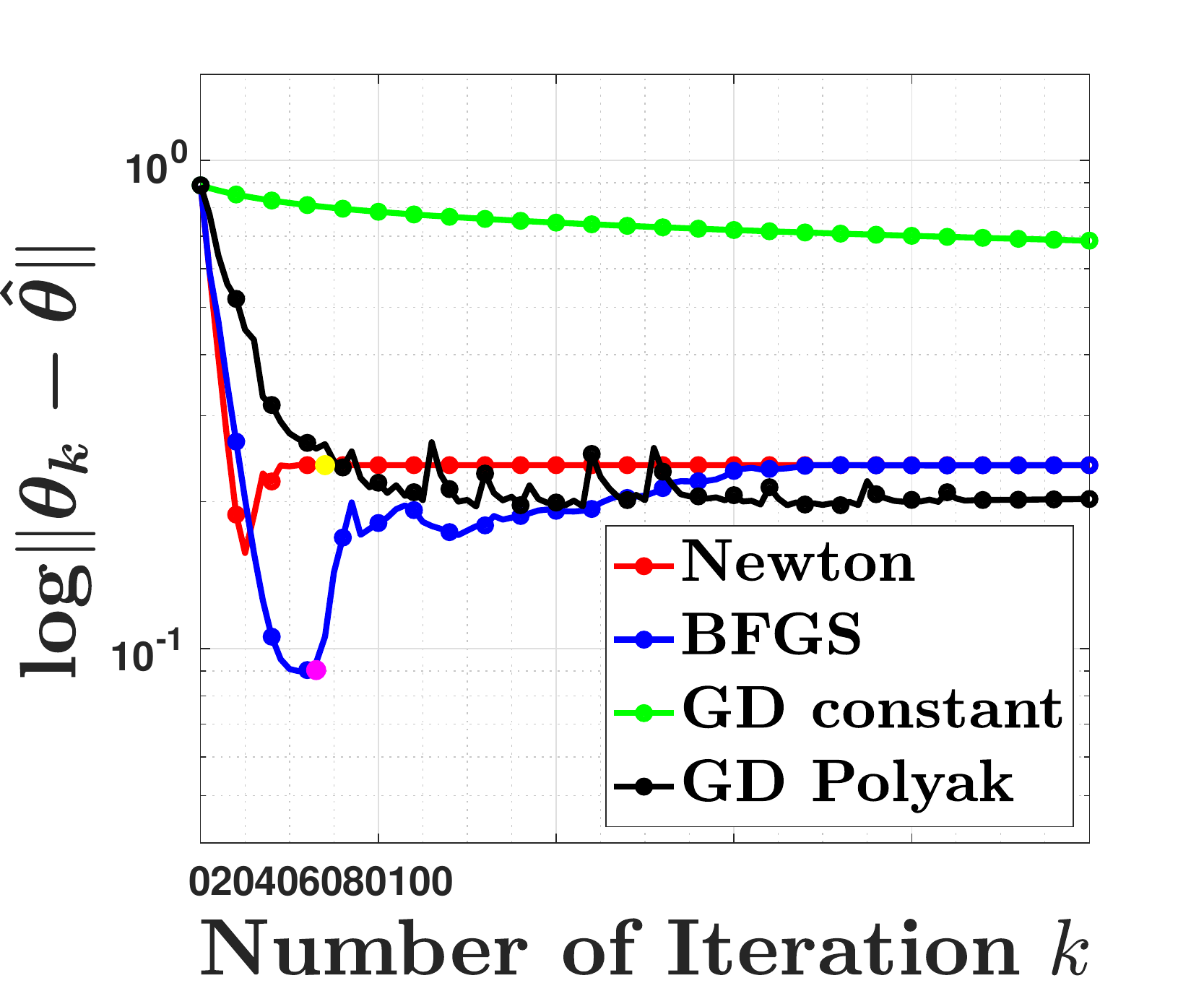}
    \caption{Low SNR (d = 100).}
\end{subfigure}
\hfill
\begin{subfigure}{0.24\textwidth}
    \includegraphics[width=\textwidth]{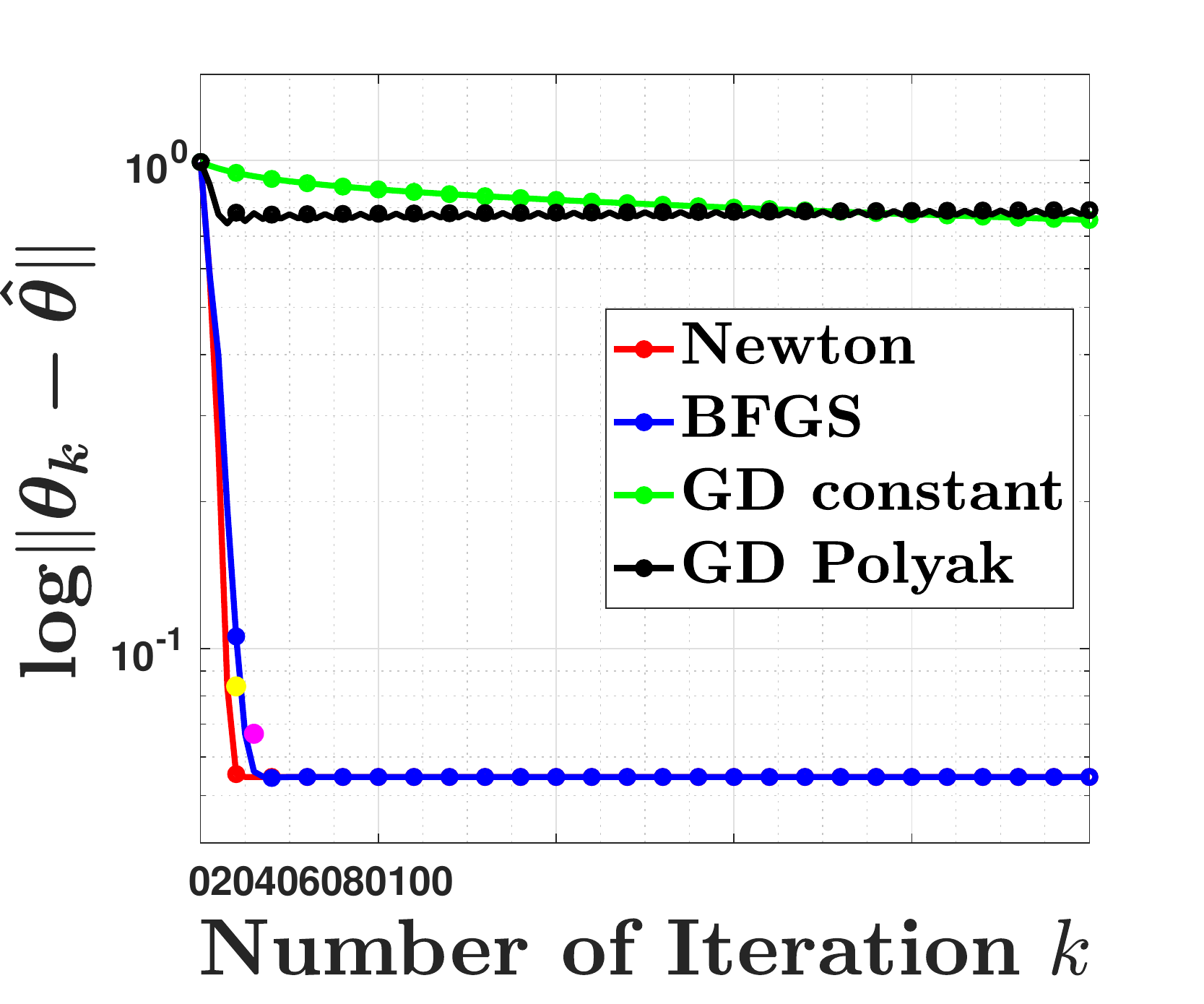}
    \caption{High SNR (d = 500).}
\end{subfigure}
\begin{subfigure}{0.24\textwidth}
    \includegraphics[width=\textwidth]{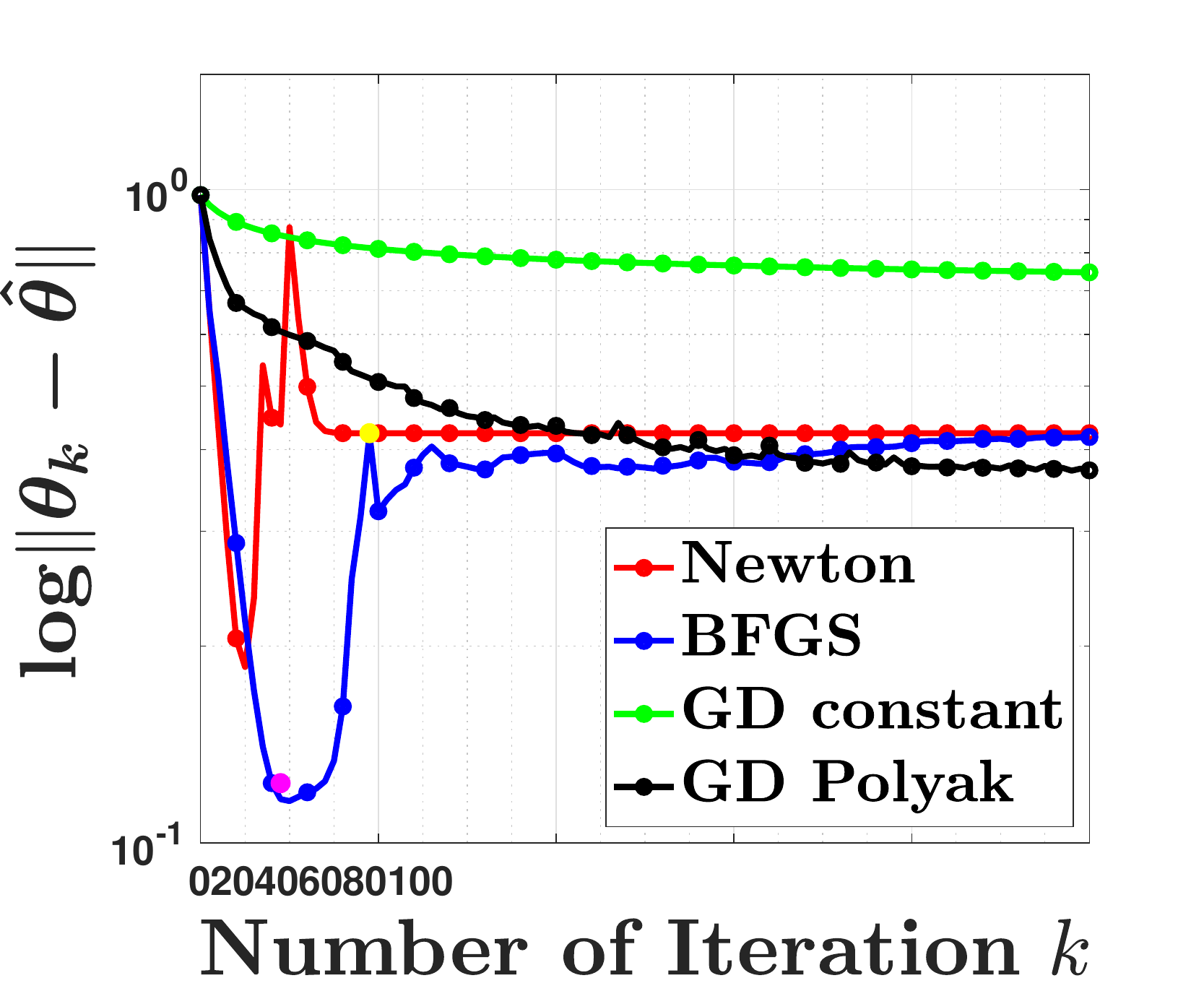}
    \caption{Low SNR (d = 500).}
\end{subfigure}
\caption{Convergence of different methods with $d = 100$ for high SNR regime are shown in (a) and low SNR regime in (b). Convergence of different methods with $d = 500$ for high SNR regime are shown in (c) and low SNR regime in (d).}
\label{fig:empirical_opt_high_d_2}
\end{figure*}

To show that BFGS can also be applied to high dimension scenarios, we conduct additional experiments on the generalized linear model with input $d=50, 100, 500$ and the power of link function $p=2$. The inputs are generated by $\{X_i\}_{i=1}^n \sim \mathcal{N}(0, \mathrm{diag}(\sigma_1^2, \cdot, \sigma_{d}^2))$ where $\sigma_k=(0.96)^{k-1}$, and the remaining setting and hyper-parameters are set identical to the low dimension scenarios. The results are shown in Figure~\ref{fig:empirical_opt_high_d_1} and \ref{fig:empirical_opt_high_d_2}. As the results show, the performance of BFGS in high dimensional scenarios are nearly identical to the low dimensional scenarios.

\section{Conclusions}\label{sec:conclusions}

In this paper, we analyzed the convergence rates of BFGS on both population and empirical loss functions of the generalized linear model in the low SNR regime. We  showed that in this case, BFGS outperforms GD and performs similar to Newton's method in terms of iteration complexity, while it  requires a lower per iteration computational complexity compared to Newton's method.  We also provided experiments for both infinite and finite sample loss functions and showed that our empirical results are consistent with our theoretical findings. Perhaps one limitation of the BFGS method is that its computational cost is still not linear in the dimension and scales as $\mathcal{O}(d^2)$. One future research direction is to analyze some other iterative methods such as limited memory-BFGS (L-BFGS) which may be able to achieve a fast linear convergence rate in the low SNR setting, while its computational cost per iteration is $\mathcal{O}(d)$.

\bibliography{bmc_article.bib}
\bibliographystyle{plainnat}

\newpage

\appendix
\section*{Appendix}

\section{Proofs}

\begin{lemma}\label{lemma_1} Consider the objective function in \eqref{opt_problem} satisfying Assumption~\ref{ass_1} and \ref{ass_2}. Then, the inverse matrix of its Hessian $\nabla^2{f(\theta)}$ can be expressed as
\begin{equation}
    \nabla^{2}{f(\theta)}^{-1} = \frac{(A^\top A)^{-1}}{q\|A\theta - b\|^{q - 2}} - \frac{(q - 2)(\theta - \hat{\theta})(\theta - \hat{\theta})^\top}{q(q - 1)\|A\theta - b\|^q}.
\end{equation}

\end{lemma}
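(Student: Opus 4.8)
The plan is to compute the gradient and Hessian of $f(\theta) = \|A\theta - b\|^q$ explicitly, and then invert the Hessian via the Sherman--Morrison rank-one update formula. Writing $r := A\theta - b$ and using $f(\theta) = (r^\top r)^{q/2}$, a direct application of the chain rule gives $\nabla f(\theta) = q\|A\theta - b\|^{q-2} A^\top (A\theta - b)$. Differentiating once more --- treating the scalar factor $\|A\theta-b\|^{q-2}$ and the vector factor $A^\top(A\theta - b)$ separately via the product rule --- yields
\begin{equation*}
\nabla^2 f(\theta) = q\|A\theta - b\|^{q-2} A^\top A + q(q-2)\|A\theta - b\|^{q-4} A^\top(A\theta - b)(A\theta - b)^\top A.
\end{equation*}

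Next I would invoke Assumption~\ref{ass_1}, which gives $b = A\hat{\theta}$ and hence $A\theta - b = A(\theta - \hat{\theta})$; this lets me rewrite $A^\top(A\theta - b) = A^\top A(\theta - \hat{\theta})$, so the rank-one term becomes $A^\top A\,(\theta-\hat{\theta})(\theta-\hat{\theta})^\top A^\top A$. Setting $M := A^\top A$ (invertible by Assumption~\ref{ass_2}), $v := \theta - \hat{\theta}$, $\alpha := q\|A\theta-b\|^{q-2}$ and $\beta := q(q-2)\|A\theta-b\|^{q-4}$, the Hessian takes the clean form $\nabla^2 f(\theta) = \alpha M + \beta (Mv)(Mv)^\top$, i.e.\ a rank-one perturbation of the positive definite matrix $\alpha M$.

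The core step is then the Sherman--Morrison formula applied to $(\alpha M + \beta(Mv)(Mv)^\top)^{-1}$. The computation simplifies because $(\alpha M)^{-1}(Mv) = \tfrac{1}{\alpha} v$, so the two quantities it produces are $(\alpha M)^{-1}\beta(Mv)(Mv)^\top(\alpha M)^{-1} = \tfrac{\beta}{\alpha^2} v v^\top$ and the scalar denominator $1 + \beta (Mv)^\top(\alpha M)^{-1}(Mv) = 1 + \tfrac{\beta}{\alpha} v^\top M v$. Here the key observation is that $v^\top M v = \|A(\theta - \hat{\theta})\|^2 = \|A\theta - b\|^2$, from which $\tfrac{\beta}{\alpha}\|A\theta-b\|^2 = q-2$ and hence the denominator equals exactly $q-1$. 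Substituting the values of $\alpha$ and $\beta$ back in and simplifying the powers of $\|A\theta - b\|$ then reproduces the two stated terms, namely $\tfrac{1}{\alpha}M^{-1} = \tfrac{(A^\top A)^{-1}}{q\|A\theta-b\|^{q-2}}$ and $\tfrac{\beta}{\alpha^2(q-1)}vv^\top = \tfrac{(q-2)(\theta-\hat{\theta})(\theta-\hat{\theta})^\top}{q(q-1)\|A\theta-b\|^q}$.

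I expect the main obstacle to be purely bookkeeping: correctly tracking the several powers of $\|A\theta - b\|$ through the product rule in the Hessian and through the $\beta/\alpha^2$ prefactor in the Sherman--Morrison term. There is one genuine conceptual point worth flagging --- the formula is only valid away from the optimum, since at $\theta = \hat{\theta}$ we have $\|A\theta - b\| = 0$ and the Hessian is singular (consistent with the non-strict convexity emphasized in the paper); for $\theta \neq \hat{\theta}$ the denominator $q-1 > 0$ guarantees that Sherman--Morrison applies and the inverse exists.
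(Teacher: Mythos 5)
Your proposal is correct and follows essentially the same route as the paper's proof: compute the Hessian as a rank-one perturbation $\alpha A^\top A + \beta\,(A^\top A(\theta-\hat{\theta}))(A^\top A(\theta-\hat{\theta}))^\top$ and invert it via Sherman--Morrison, where the key cancellation $v^\top A^\top A v = \|A\theta - b\|^2$ makes the denominator exactly $q-1$. Your closing remark that the identity only makes sense for $\theta \neq \hat{\theta}$ (where the Hessian is singular) is a valid caveat that the paper leaves implicit.
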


\begin{proof}
Notice that the Hessian of objective function \eqref{opt_problem} can be expressed as
\begin{equation}
    \nabla^{2}{f(\theta)} =  q\|A\theta - b\|^{q - 2}A^\top A + q(q - 2)\|A\theta - b\|^{q - 4}A^\top(A\theta - b)(A\theta - b)^\top A.
\end{equation}
We use the Sherman–Morrison formula. Suppose that $X \in \mathbb{R}^{d \times d}$ is an invertible matrix and $a, b \in \mathbb{R}^d$ are two vectors satisfying that $1 + b^\top X^{-1} a \neq 0$. Then, the matrix $X + ab^\top$ is invertible and
\begin{equation*}
    (X + ab^\top)^{-1} = X^{-1} - \frac{X^{-1}ab^\top X^{-1}}{1 + b^\top X^{-1} a}.
\end{equation*}
Applying the Sherman–Morrison formula with $X = q\|A\theta - b\|^{q - 2}A^\top A$, $a = q(q - 2)\|A\theta - b\|^{q - 4}A^\top(A\theta - b)$ and $b = A^\top(A\theta - b)$. Notice that $A^\top A$ is invertible, hence $X$ is invertible and
\begin{equation*}
\begin{split}
    & 1 + b^\top X^{-1} a \\
    = & \quad 1 + (A\theta - b)^\top A \frac{(A^\top A)^{-1}}{q\|A\theta - b\|^{q - 2}}q(q - 2)\|A\theta - b\|^{q - 4}A^\top(A\theta - b) \\
    = & \quad 1 + (q - 2)(A\theta - b)^\top A \frac{(A^\top A)^{-1}A^\top A(\theta - \hat{\theta})}{\|A\theta - b\|^2} \\
    = & \quad 1 + (q - 2)\frac{(A\theta - b)^\top(A\theta - b)}{{\|A\theta - b\|^2}} \\
    = & \quad q - 1 \neq 0. \qquad \text{(Since $q \geq 4$.)}
\end{split}
\end{equation*}
Therefore, we obtain that
\begin{equation}
\begin{split}
    & \nabla^{2}{f(\theta)}^{-1} \\
    = & \quad \frac{(A^\top A)^{-1}}{q\|A\theta - b\|^{q - 2}} - \frac{\frac{(A^\top A)^{-1}}{q\|A\theta - b\|^{q - 2}} q(q - 2)\|A\theta - b\|^{q - 4}A^\top(A\theta - b) (A^\top(A\theta - b))^\top \frac{(A^\top A)^{-1}}{q\|A\theta - b\|^{q - 2}}}{q - 1} \\
    = & \quad \frac{(A^\top A)^{-1}}{q\|A\theta - b\|^{q - 2}} - \frac{(q - 2)}{q(q - 1)\|A\theta - b\|^{q}}(A^\top A)^{-1}AA^\top (\theta - \hat{\theta}) (\theta - \hat{\theta})^\top AA^\top (A^\top A)^{-1} \\
    = & \quad \frac{(A^\top A)^{-1}}{q\|A\theta - b\|^{q - 2}} - \frac{(q - 2)(\theta - \hat{\theta})(\theta - \hat{\theta})^\top}{q(q - 1)\|A\theta - b\|^q}.
\end{split}
\end{equation}
As a consequence, we obtain the conclusion of the lemma.
\end{proof}

\begin{lemma}\label{lemma_2}
    \textbf{Banach's Fixed-Point Theorem. } Consider the differentiable function $f: D \subset \mathbb{R} \to D \subset \mathbb{R}$. Suppose that there exists $C \in (0, 1)$ such that $|f'(x)| \leq C$ for any $x \in D$. Now let $x_0 \in D$ be arbitrary and define the sequence $\{x_k\}_{k = 0}^{\infty}$ as
    \begin{equation}
        x_{k + 1} = f(x_k), \qquad \forall k \geq 0.
    \end{equation}
    Then, the sequence $\{x_k\}_{k = 0}^{\infty}$ converges to the unique fixed point $x_*$ defined as
    \begin{equation}
        x_* = f(x_*),
    \end{equation}
    with linear convergence rate of
    \begin{equation}
        |x_{k} - x_*| \leq C^{k}|x_{0} - x_*|, \qquad \forall k \geq 0.
    \end{equation}
\end{lemma}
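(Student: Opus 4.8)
The plan is to reduce the statement to the classical contraction-mapping argument, since the hypothesis $|f'(x)| \le C < 1$ on $D$ is exactly what is needed to make $f$ a contraction. First I would upgrade the pointwise derivative bound into a global Lipschitz estimate using the Mean Value Theorem: for any $x, y \in D$, there exists a point $\xi$ between $x$ and $y$ with $f(x) - f(y) = f'(\xi)(x - y)$, so that $|f(x) - f(y)| \le C|x - y|$. This single inequality is the engine of the entire proof, and the remaining steps are all immediate corollaries of it.

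From the contraction inequality, uniqueness of the fixed point is immediate: if both $x_*$ and $y_*$ satisfy $x = f(x)$, then $|x_* - y_*| = |f(x_*) - f(y_*)| \le C|x_* - y_*|$, and since $C \in (0,1)$ this forces $|x_* - y_*| = 0$. For existence, I would show that the iterates form a Cauchy sequence. Iterating the contraction bound on consecutive terms gives $|x_{k+1} - x_k| \le C^k |x_1 - x_0|$, and summing the resulting geometric tail shows that for any $m > n$ one has $|x_m - x_n| \le \tfrac{C^n}{1 - C}|x_1 - x_0|$, which tends to zero as $n \to \infty$. By completeness of $\mathbb{R}$ the sequence converges to some limit $x_*$, and because $f$ is continuous (being differentiable), passing to the limit in the recursion $x_{k+1} = f(x_k)$ yields $x_* = f(x_*)$, so $x_*$ is indeed the fixed point.

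Finally, I would establish the quantitative linear rate by applying the contraction bound directly against the fixed point rather than between consecutive iterates: since $x_* = f(x_*)$, we have $|x_k - x_*| = |f(x_{k-1}) - f(x_*)| \le C|x_{k-1} - x_*|$. Unrolling this recursion $k$ times gives $|x_k - x_*| \le C^k |x_0 - x_*|$, which is precisely the claimed bound.

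The main obstacle is the existence step, since the derivative hypothesis is only local and pointwise while the conclusion is a global convergence statement; the work lies in converting the contraction inequality into the geometric Cauchy estimate and ensuring the limit stays in $D$, which implicitly requires $D$ to be closed (equivalently, complete as a metric subspace of $\mathbb{R}$). In the application of this lemma the relevant domains are the closed sets on which the factor sequences $\{r_k\}$ evolve, so this requirement is met, and every other part of the argument — uniqueness and the rate — follows with no additional effort from the Mean Value Theorem estimate.
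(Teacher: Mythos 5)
Your proof is correct, and it takes a genuinely different route from the paper for the simple reason that the paper does not prove this lemma at all: its entire ``proof'' is the single line ``Check \citep{fixedpoint}'', i.e., a pointer to an external reference. Your argument is the standard self-contained contraction-mapping proof specialized to the real line: the Mean Value Theorem upgrades the pointwise bound $|f'(x)| \leq C$ to the Lipschitz estimate $|f(x) - f(y)| \leq C|x - y|$; uniqueness is immediate; existence follows from the geometric Cauchy estimate $|x_m - x_n| \leq \frac{C^n}{1 - C}|x_1 - x_0|$ together with completeness and continuity of $f$; and the rate follows by contracting against the fixed point. What your write-up buys, beyond self-containment, is that it surfaces the two hypotheses the lemma's statement (and the citation) quietly glosses over: first, the Mean Value Theorem step needs the segment joining $x$ and $y$ to lie inside $D$, so $D$ must be an interval; second, the limit of the Cauchy sequence must remain in $D$, so $D$ must be closed. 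You flag the closedness issue explicitly and correctly observe it is satisfied in the paper's application; you should flag the interval (convexity) requirement with the same care, though it too is harmless there, since in Theorem~\ref{theorem_2} the iteration runs on the closed interval $[0,1]$ (with the removable singularity of $g$ at $r = 1$ understood as filled in by its limit value). With those two domain conditions stated, your proof is a correct drop-in replacement for the paper's citation.
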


\begin{proof}
    Check \citep{fixedpoint}.
\end{proof}

\subsection{Proof of Theorem~\ref{theorem_1}}\label{sec:proof_of_theorem_1}

We use induction to prove the convergence results in \eqref{convergence_result} and \eqref{linear_rate}. Note that $b = A\hat{\theta}$ by Assumption~\ref{ass_1} and the gradient and Hessian of the objective function in \eqref{opt_problem} are explicitly given by 
\begin{equation}
    \nabla{f(\theta)} = q\|A\theta - b\|^{q - 2}A^\top(A\theta - b) = q\|A\theta - b\|^{q - 2}A^\top A(\theta - \hat{\theta}),
\end{equation}
\begin{equation}
    \nabla^{2}{f(\theta)} = q\|A\theta - b\|^{q - 2}A^\top A + q(q - 2)\|A\theta - b\|^{q - 4}A^\top(A\theta - b)(A\theta - b)^\top A.
\end{equation}
Applying Lemma~\ref{lemma_1}, we can obtain that
\begin{equation}
    \nabla^{2}{f(\theta)}^{-1} = \frac{(A^\top A)^{-1}}{q\|A\theta - b\|^{q - 2}} - \frac{(q - 2)(\theta - \hat{\theta})(\theta - \hat{\theta})^\top}{q(q - 1)\|A\theta - b\|^q}.
\end{equation}
First, we consider the initial iteration
\begin{equation*}
    \theta_1 = \theta_0 - H_0\nabla{f(\theta_0)} = \theta_0 - \nabla{f(\theta_0)}^{-1}\nabla{f(\theta_0)},
\end{equation*}
\begin{equation*}
    \theta_1 - \hat{\theta} = \theta_0 - \hat{\theta} - \nabla{f(\theta_0)}^{-1}\nabla{f(\theta_0)}.
\end{equation*}
Notice that $b =  A\hat{\theta}$ by Assumption~\ref{ass_1} and
\begin{equation}
\begin{split}
    & \nabla^2{f(\theta_0)}^{-1}\nabla{f(\theta_0)} \\
    = & \quad \left[\frac{(A^\top A)^{-1}}{q\|A\theta_0 - b\|^{q - 2}} - \frac{(q - 2)(\theta_0 - \hat{\theta})(\theta_0 - \hat{\theta})^\top}{q(q - 1)\|A\theta_0 - b\|^q}\right]q\|A\theta - b\|^{q - 2}A^\top A(\theta_0 - \hat{\theta}) \\
     = & \quad \theta_0 - \hat{\theta} - \frac{q - 2}{q - 1}\frac{(\theta_0 - \hat{\theta})^\top A^\top A(\theta_0 - \hat{\theta})}{\|A\theta_0 - b\|^2}(\theta_0 - \hat{\theta}) \\
    = & \quad \theta_0 - \hat{\theta} - \frac{q - 2}{q - 1}\frac{(A\theta_0 - b)^\top(A\theta_0 - b)}{\|A\theta_0 - b\|^2}(\theta_0 - \hat{\theta}) \\
    = & \quad \theta_0 - \hat{\theta} - \frac{q - 2}{q - 1}(\theta_0 - \hat{\theta}).
\end{split}
\end{equation}
Therefore, we obtain that
\begin{equation*}
    \theta_1 - \hat{\theta} = \theta_0 - \hat{\theta} - \nabla{f(\theta_0)}^{-1}\nabla{f(\theta_0)} = \frac{q - 2}{q - 1}(\theta_0 - \hat{\theta}).
\end{equation*}
Condition \eqref{convergence_result} holds for $k = 1$ with $r_0 = \frac{q - 2}{q - 1}$. Now we assume that condition \eqref{convergence_result} holds for $k = t$ where $t \geq 1$, i.e.,
\begin{equation}\label{eq_base}
    \theta_t - \hat{\theta} = r_{t - 1} (\theta_{t - 1} - \hat{\theta}).
\end{equation}
Considering the condition $b = A\hat{\theta}$ in Assumption~\ref{ass_1} and the condition in \eqref{eq_base}, we further have 
\begin{equation*}
    A\theta_t - b = A(\theta_t - \hat{\theta}) = r_{t - 1}A(\theta_{t - 1} - \hat{\theta}) = r_{t - 1}(A\theta_{t - 1} - b),
\end{equation*}
which implies that
\begin{equation}\label{grad_expression}
    \nabla{f(\theta_t)} = qr_{t - 1}^{q - 1}\|A(\theta_{t - 1} - \hat{\theta})\|^{q - 2}A^\top A(\theta_{t - 1} - \hat{\theta}).
\end{equation}
We further show that the variable displacement and gradient difference can be written as 
\begin{equation*}
    s_{t - 1} = \theta_{t} - \theta_{t - 1} = \theta_{t} - \hat{\theta} - \theta_{t - 1} + \hat{\theta} = (r_{t - 1} - 1)(\theta_{t - 1} - \hat{\theta}),
\end{equation*}
and
\begin{equation*}
    u_{t - 1} = \nabla{f(\theta_{t})} - \nabla{f(\theta_{t - 1})} = q(r_{t - 1}^{q - 1} - 1)\|A(\theta_{t - 1} - \hat{\theta})\|^{q - 2}A^\top A(\theta_{t - 1} - \hat{\theta}).
\end{equation*}
Considering these expressions, we can show that the rank-1 matrix in the update of BFGS  $u_{t - 1} s_{t - 1}^\top$ is given by 
\begin{equation*}
    u_{t - 1} s_{t - 1}^\top = q(r_{t - 1}^{q - 1} - 1)(r_{t - 1} - 1)\|A(\theta_{t - 1} - \hat{\theta})\|^{q - 2}A^\top A(\theta_{t - 1} - \hat{\theta})(\theta_{t - 1} - \hat{\theta})^\top,
\end{equation*}
and the inner product $s_{t - 1}^\top u_{t - 1}$ can be written as
\begin{equation*}
\begin{split}
    s_{t - 1}^\top u_{t - 1} & = q(r_{t - 1}^{q - 1} - 1)(r_{t - 1} - 1)\|A(\theta_{t - 1} - \hat{\theta})\|^{q - 2}(\theta_{t - 1} - \hat{\theta})^\top A^\top A(\theta_{t - 1} - \hat{\theta})\\
    & = q(r_{t - 1}^{q - 1} - 1)(r_{t - 1} - 1)\|A(\theta_{t - 1} - \hat{\theta})\|^{q}.
\end{split}
\end{equation*}
These two expressions allow us to simplify the matrix $I - \frac{u_{t - 1} s_{t - 1}^\top}{s_{t - 1}^\top u_{t - 1}}$ in the update of BFGS as 
\begin{equation}
    I - \frac{u_{t - 1} s_{t - 1}^\top}{s_{t - 1}^\top u_{t - 1}} = I - \frac{A^\top A(\theta_{t - 1} - \hat{\theta})(\theta_{t - 1} - \hat{\theta})^\top}{\|A(\theta_{t - 1} - \hat{\theta})\|^{2}}.
\end{equation}
An important property of the above matrix is that its null space is the set of the vectors that are parallel to $u_{t - 1}$. Considering the expression for $u_{t - 1}$, any vector that is parallel to $A^\top A(\theta_{t - 1} - \hat{\theta})$ is in the null space of the above matrix. We can easily observe that the gradient defined in \eqref{grad_expression} satisfies this condition and therefore 
\begin{equation}\label{population_property}
\begin{split}
    & \left(I - \frac{u_{t - 1} s_{t - 1}^\top}{s_{t - 1}^\top u_{t - 1}}\right)\nabla{f(\theta_t)}\\
    = & \quad qr_{t - 1}^{q - 1}\|A(\theta_{t - 1} - \hat{\theta})\|^{q - 2}\left(I - \frac{A^\top A(\theta_{t - 1} - \hat{\theta})(\theta_{t - 1} - \hat{\theta})^\top}{\|A(\theta_{t - 1} - \hat{\theta})\|^{2}}\right)A^\top A(\theta_{t - 1} - \hat{\theta})\\
    = & \quad qr_{t - 1}^{q - 1}\|A(\theta_{t - 1} - \hat{\theta})\|^{q - 2}\left(A^\top A(\theta_{t - 1} - \hat{\theta}) - \frac{A^\top A(\theta_{t - 1} - \hat{\theta})\| A(\theta_{t - 1} - \hat{\theta}) \|^2}{\|A(\theta_{t - 1} - \hat{\theta})\|^{2}}\right)\\
    = & \quad 0.
\end{split}
\end{equation}
This important observation shows that if the condition in \eqref{eq_base} holds, then the BFGS descent direction $H_{t}\nabla{f(\theta_t)}$ can be simplified as
\begin{equation}
\begin{split}
    & H_{t}\nabla{f(\theta_t)}\\
    = & \quad \left(I - \frac{s_{t - 1} u_{t - 1}^\top}{s_{t - 1}^\top u_{t - 1}}\right)H_{t - 1}\left(I - \frac{u_{t - 1} s_{t - 1}^\top}{s_{t - 1}^\top u_{t - 1}}\right)\nabla{f(\theta_t)} + \frac{s_{t - 1} s_{t - 1}^\top}{s_{t - 1}^\top u_{t - 1}}\nabla{f(\theta_t)}\\
    = & \quad \frac{s_{t - 1} s_{t - 1}^\top}{s_{t - 1}^\top u_{t - 1}}\nabla{f(\theta_t)}\\
    = & \quad \frac{(r_{t - 1} - 1)^2(\theta_{t - 1} - \hat{\theta})(\theta_{t - 1} - \hat{\theta})^\top}{q(r_{t - 1}^{q - 1} - 1)(r_{t - 1} - 1)\|A(\theta_{t - 1} - \hat{\theta})\|^{q}}qr_{t - 1}^{q - 1}\|A(\theta_{t - 1} - \hat{\theta})\|^{q - 2}A^\top A(\theta_{t - 1} - \hat{\theta})\\
    = & \quad \frac{1 - r_{t - 1}}{1 - r_{t - 1}^{q - 1}}r_{t - 1}^{q - 1}(\theta_{t - 1} - \hat{\theta})\frac{\|A(\theta_{t - 1} - \hat{\theta})\|^{q - 2}(\theta_{t - 1} - \hat{\theta})^\top A^\top A(\theta_{t - 1} - \hat{\theta})}{\|A(\theta_{t - 1} - \hat{\theta})\|^{q}}\\
    = & \quad \frac{1 - r_{t - 1}}{1 - r_{t - 1}^{q - 1}}r_{t - 1}^{q - 1}(\theta_{t - 1} - \hat{\theta}).
\end{split}
\end{equation}
This simplification implies that for the new iterate $\theta_{t + 1}$, we have
\begin{equation}
\begin{split}
    \theta_{t + 1} - \hat{\theta} 
    & = \theta_t - H_t\nabla{f(\theta_t)} - \hat{\theta} = \theta_t - \hat{\theta} -  \frac{1 - r_{t - 1}}{1 - r_{t - 1}^{q - 1}}r_{t - 1}^{q - 1}\frac{(\theta_{t} - \hat{\theta})}{r_{t - 1}}\\
    & = \frac{1 - r_{t - 1}^{q - 2}}{1 - r_{t - 1}^{q - 1}}(\theta_t - \hat{\theta}) = r_t (\theta_t - \hat{\theta}),
\end{split}
\end{equation}
where
\begin{equation}
    r_t = \frac{1 - r_{t - 1}^{q - 2}}{1 - r_{t - 1}^{q - 1}}.
\end{equation}
Therefore, we prove that condition \eqref{convergence_result} holds for $k = t + 1$. By induction, we prove the linear convergence results in \eqref{convergence_result} and \eqref{linear_rate}.

One property of this convergence results is that the error vectors $\{\theta_k - \hat{\theta}\}_{k = 0}^{\infty}$ are parallel to each other with the same direction as shown in \eqref{convergence_result}. This indicates that the iterations $\{\theta_k\}_{k = 0}^{\infty}$ converge to the optimal solution $\hat{\theta}$ along the same straight line defined by $\theta_0 - \hat{\theta}$. Only the length of each vector $\theta_k - \hat{\theta}$ reduces to zero with the linear convergence rates $\{r_k\}_{k = 0}^{\infty}$ specified in \eqref{linear_rate} and the direction remains all the same. Notice that this is not a common property for BFGS and Newton's method. This property requires that the initial Hessian approximation matrix is $H_0 = \nabla^2{f(\theta_0)}^{-1}$ and the convex problem must be quadratic problem \eqref{opt_problem}. When the objective function is a general convex function or the initial Hessian approximation matrix is not $H_0 = \nabla^2{f(\theta_0)}^{-1}$, there is no guarantee that the error vectors $\{\theta_k - \hat{\theta}\}_{k = 0}^{\infty}$ are parallel to each other. To better visualize this property, we have plotted $\cos{\theta_k} = {(\theta_k - \hat{\theta})^\top(\theta_0 - \hat{\theta})}/{\|\theta_k - \hat{\theta}\|\|\theta_0 - \hat{\theta}\|}$ of $k \geq 1$ for both Newton's method and BFGS in Figures~\ref{fig:population_Newton} and \ref{fig:population_BFGS} with population loss function defined in \eqref{opt_problem}. We observed that all values of $\cos{\theta_k}$ are one, which indicates that all  vectors $\{\theta_k - \hat{\theta}\}_{k = 0}^{\infty}$ are parallel to each other.

\begin{figure*}[t!]
\centering
\begin{subfigure}{0.24\textwidth}
    \includegraphics[width=\textwidth]{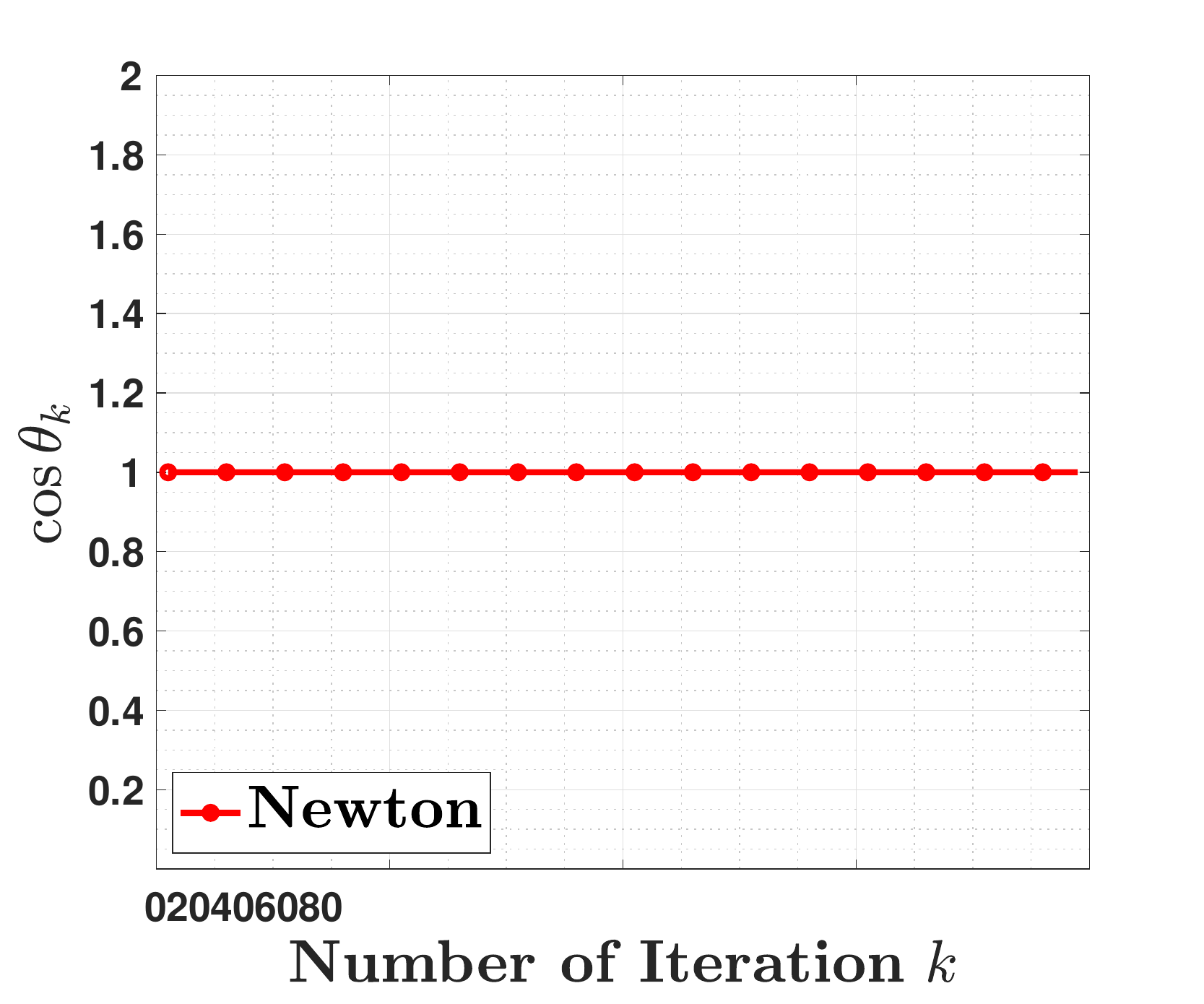}
    \caption{$d=10,q=4$.}
\end{subfigure}
\hfill
\begin{subfigure}{0.24\textwidth}
    \includegraphics[width=\textwidth]{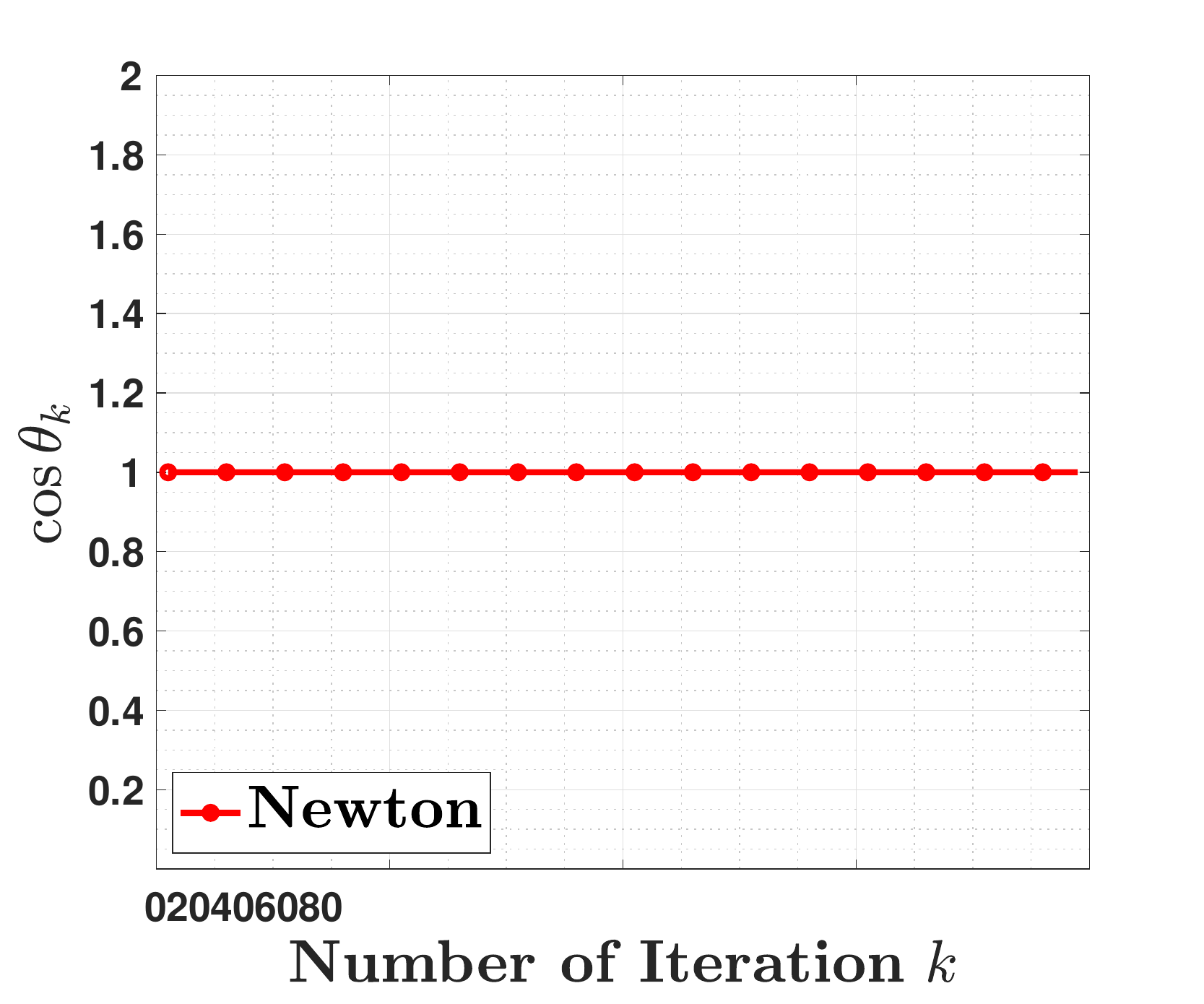}
    \caption{$d=10,q=10$.}
\end{subfigure}
\hfill
\begin{subfigure}{0.24\textwidth}
    \includegraphics[width=\textwidth]{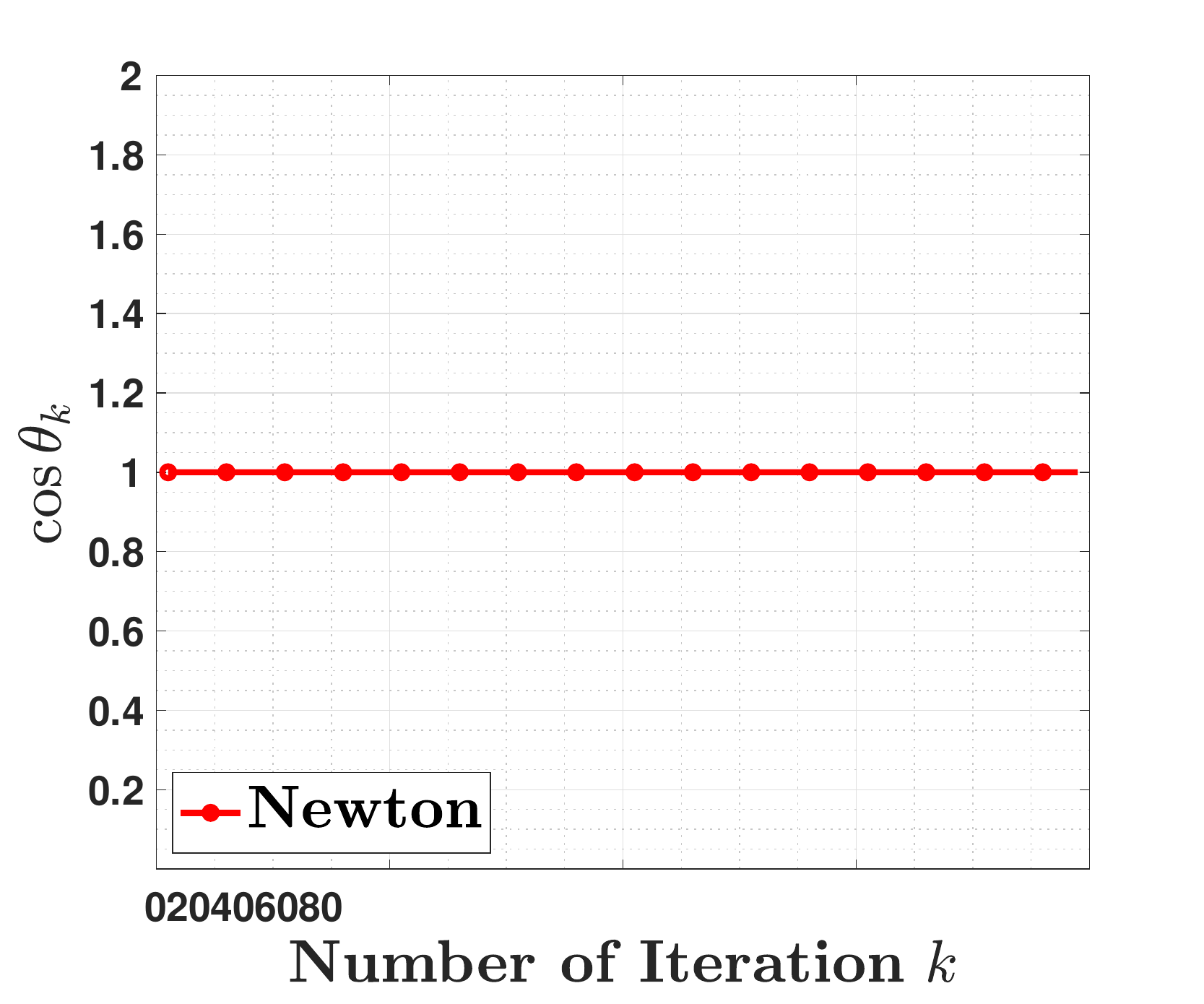}
    \caption{$d=10^3,q=4$.}
\end{subfigure}
\begin{subfigure}{0.24\textwidth}
    \includegraphics[width=\textwidth]{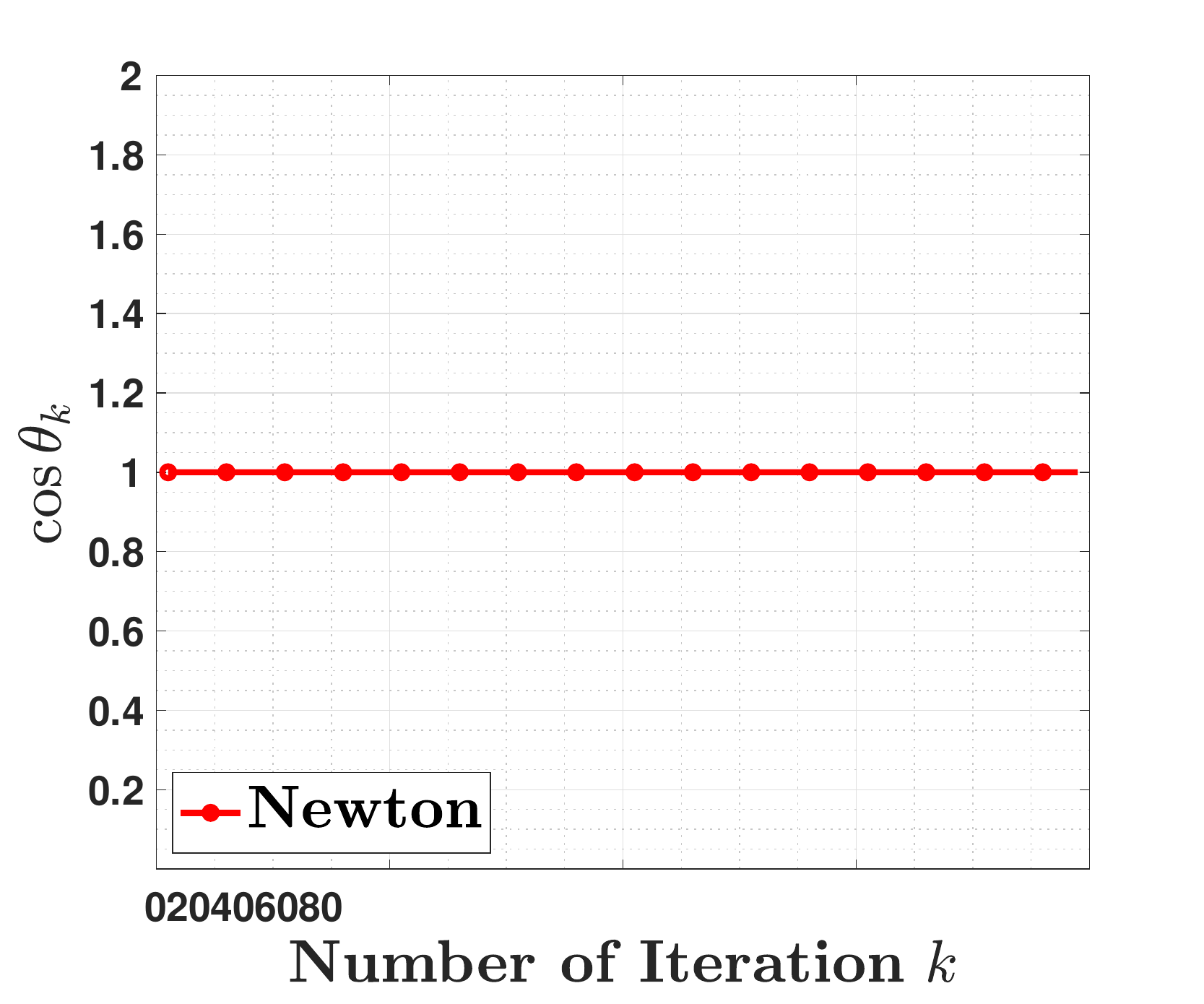}
    \caption{$d=10^3,q=10$.}
\end{subfigure}
\caption{Values of $\cos{\theta_k}$ generated by Newton's method for different values of $d$ and $q$.}
\label{fig:population_Newton}
\end{figure*}

\begin{figure*}[t!]
\centering
\begin{subfigure}{0.24\textwidth}
    \includegraphics[width=\textwidth]{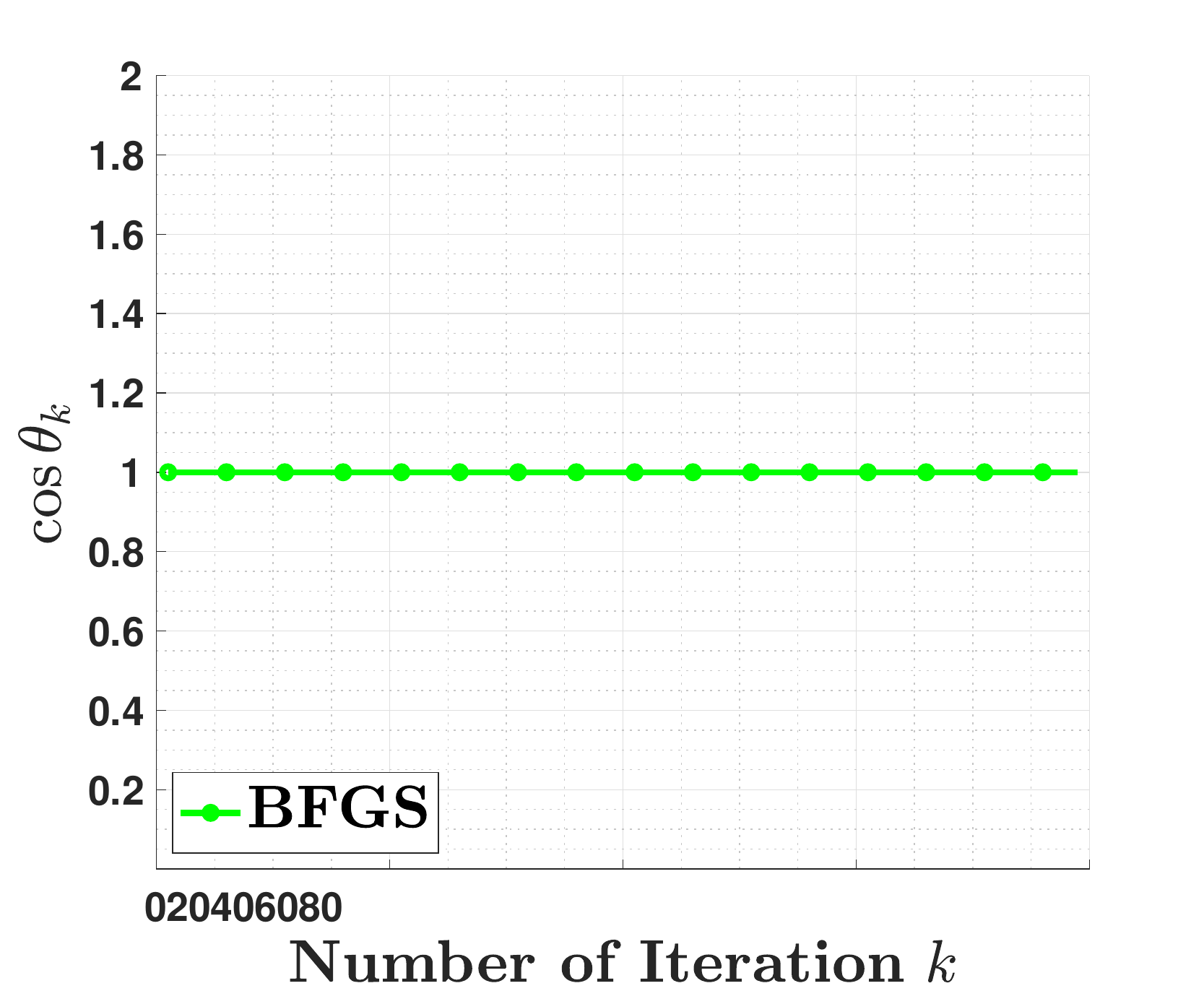}
    \caption{$d=10,q=4$.}
\end{subfigure}
\hfill
\begin{subfigure}{0.24\textwidth}
    \includegraphics[width=\textwidth]{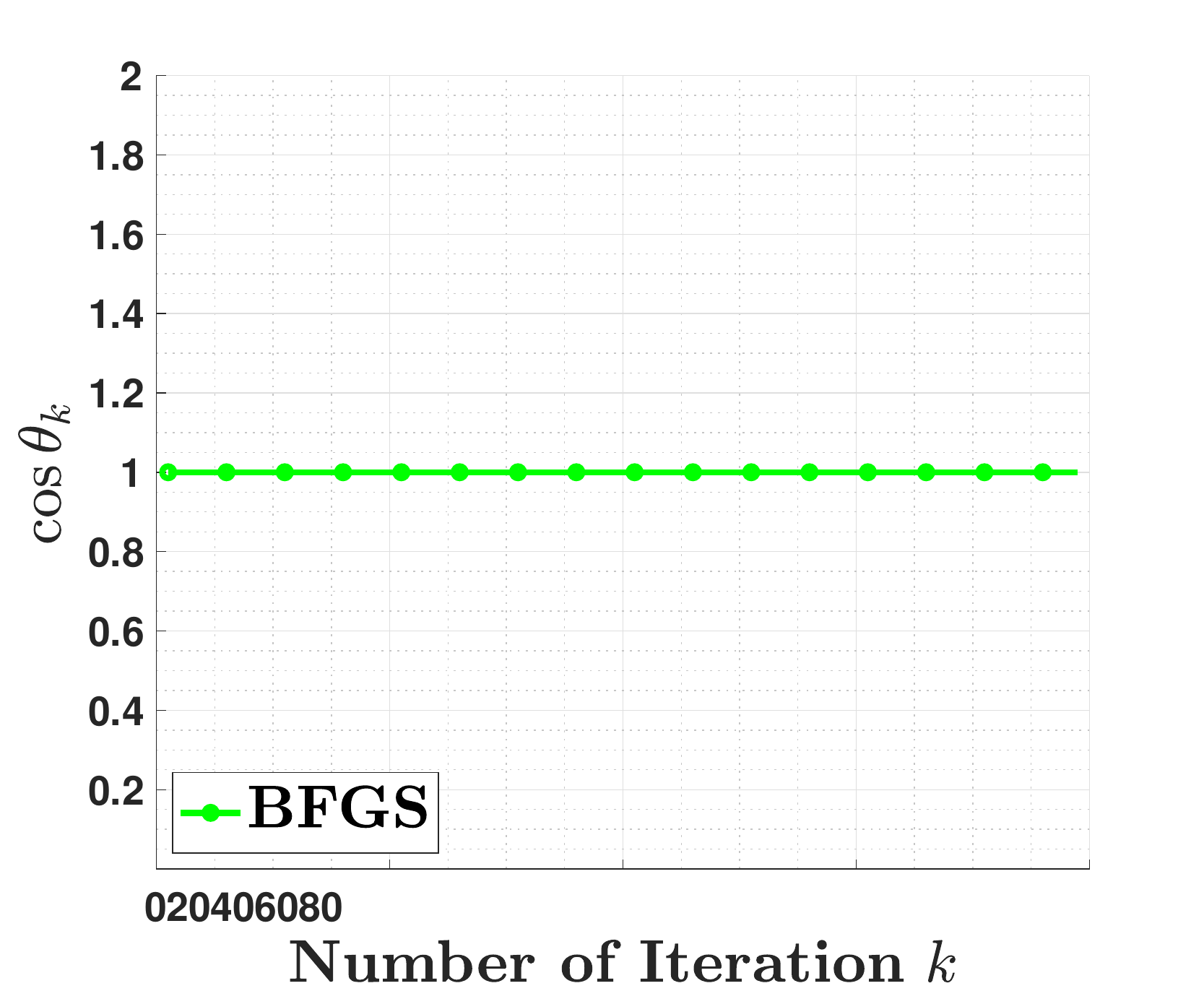}
    \caption{$d=10,q=10$.}
\end{subfigure}
\hfill
\begin{subfigure}{0.24\textwidth}
    \includegraphics[width=\textwidth]{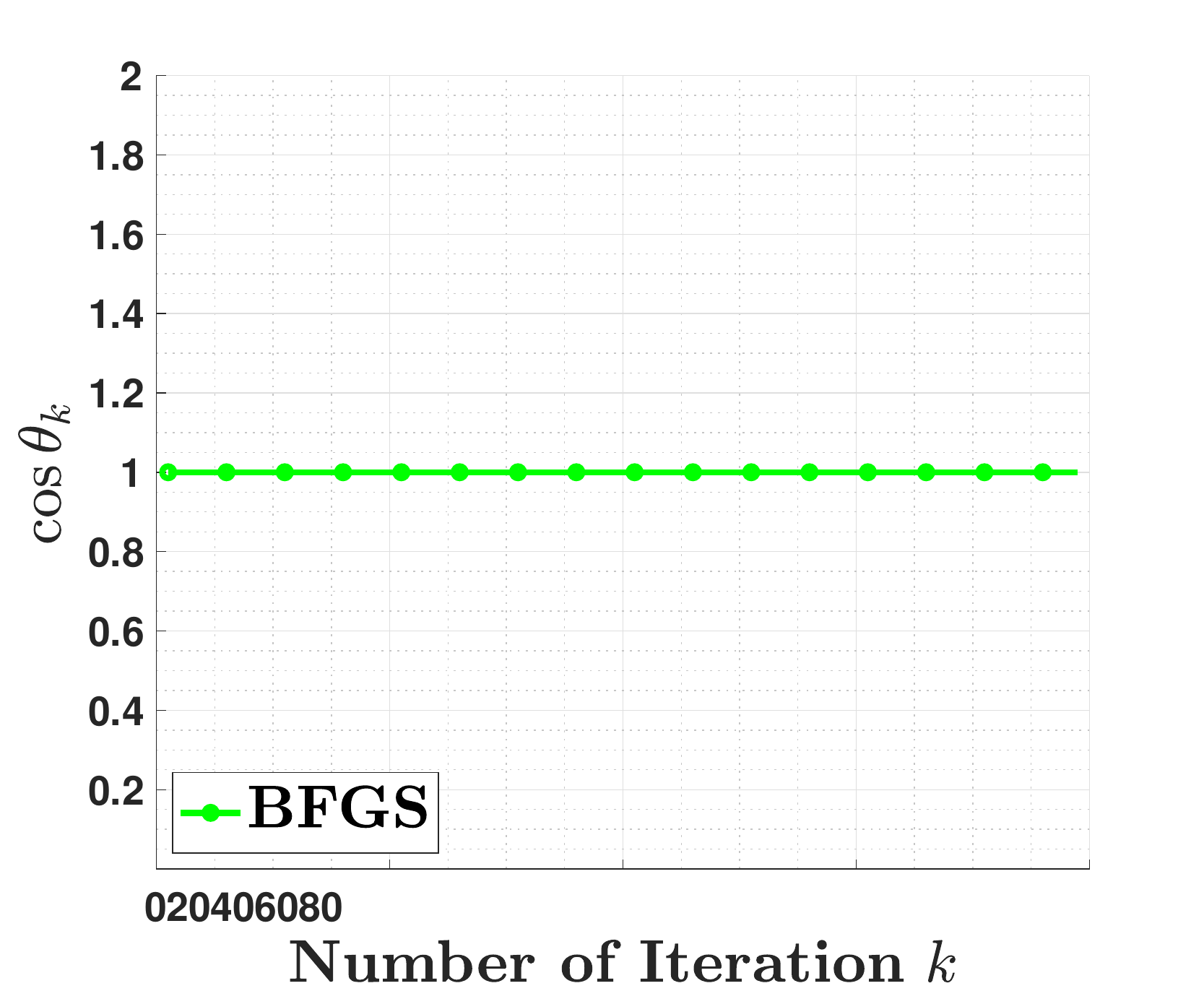}
    \caption{$d=10^3,q=4$.}
\end{subfigure}
\begin{subfigure}{0.24\textwidth}
    \includegraphics[width=\textwidth]{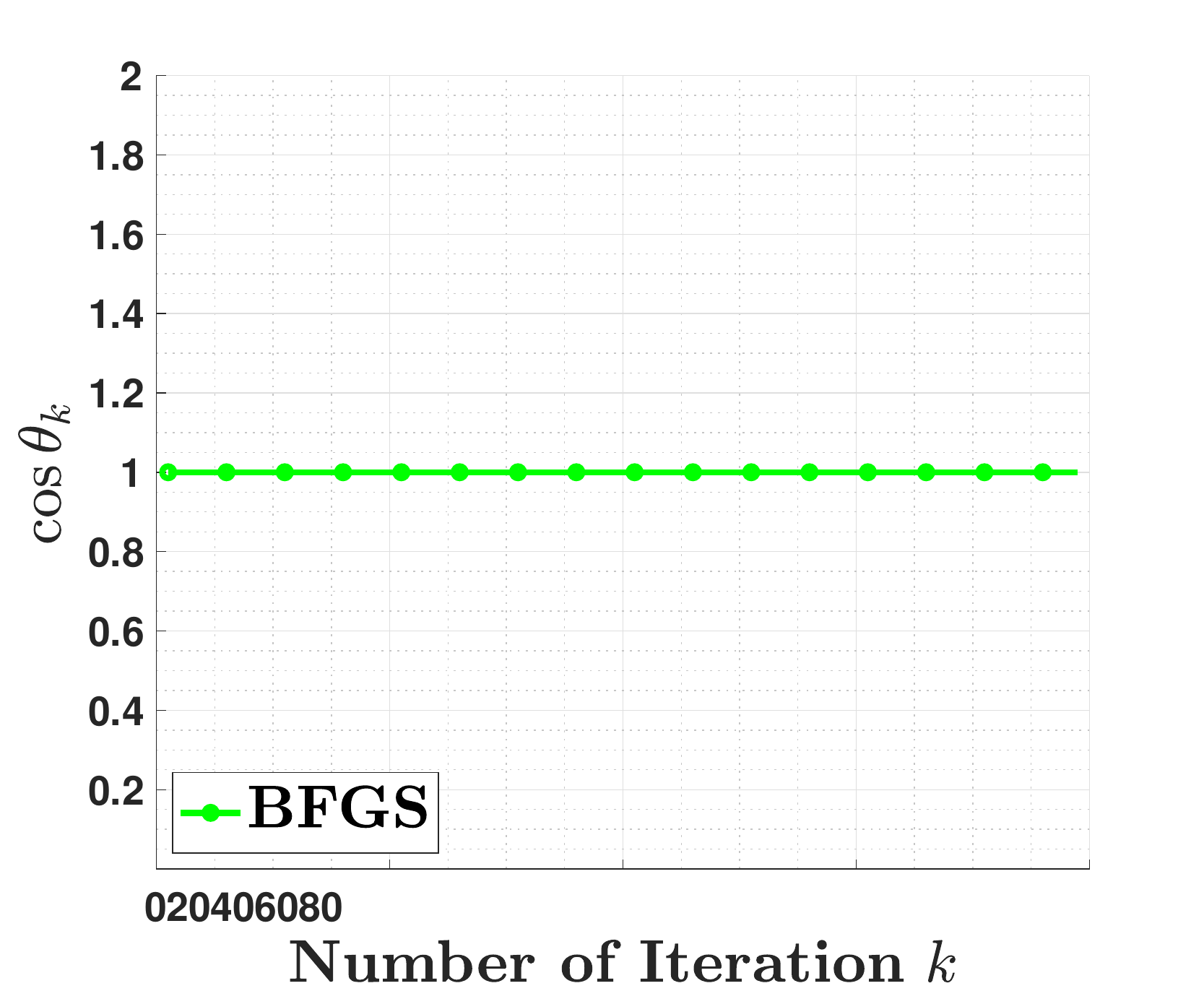}
    \caption{$d=10^3,q=10$.}
\end{subfigure}
\caption{Values of $\cos{\theta_k}$ generated by the BFGS method for different values of $d$ and $q$.}
\label{fig:population_BFGS}
\end{figure*}

\subsection{Proof of Theorem~\ref{theorem_2}}\label{sec:proof_of_theorem_2}

Recall that we have
\begin{equation}
    r_0 = \frac{q - 2}{q - 1}, \qquad r_{k} = \frac{1 - r_{k - 1}^{q - 2}}{1 - r_{k - 1}^{q - 1}}, \qquad \forall k \geq 1.
\end{equation}
Consider that $q \geq 4$ and define the function $g(r)$ as
\begin{equation}
    g(r) := \frac{1 - r^{q - 2}}{1 - r^{q - 1}}, \qquad r \in [0, 1].
\end{equation}
Suppose that $r_* \in (0, 1)$ satisfying that $r_* = g(r_*)$, which is equivalent to
\begin{equation}
    r_{*}^{q - 1} + r_{*}^{q - 2} = 1.
\end{equation}
Notice that
\begin{equation*}
    g'(r) = \frac{(q - 1)r^{q - 2} - r^{2q - 4} - (q - 2)r^{q - 3}}{(1 - r^{q - 1})^2},
\end{equation*}
and
\begin{equation*}
\begin{split}
    & (q - 1)r^{q - 2} - r^{2q - 4} - (q - 2)r^{q - 3} \\
    = & \quad r^{q - 3}[(q - 1)(r - 1) - (r^{q - 1} - 1)] \\
    = & \quad r^{q - 3}(r - 1)(q - 1 - \frac{r^{q - 1} - 1}{r - 1}) \\
     = & \quad r^{q - 3}(r - 1)(q - 1 - \sum_{i = 0}^{q - 2}r^i).
\end{split}
\end{equation*}
Since $r \in [0, 1]$, we have that
\begin{equation*}
    r^{q - 3} \geq 0, \quad r - 1 \leq 0, \quad \sum_{i = 0}^{q - 2}r^i \leq \sum_{i = 0}^{q - 2}1 = q - 1.
\end{equation*}
Therefore, we obtain that
\begin{equation*}
    (q - 1)r^{q - 2} - r^{2q - 4} - (q - 2)r^{q - 3} \leq 0,
\end{equation*}
and
\begin{equation*}
    |g'(r)| = \frac{r^{2q - 4} + (q - 2)r^{q - 3} - (q - 1)r^{q - 2}}{(1 - r^{q - 1})^2}.
\end{equation*}
Our target is to prove that for any $r \in [0, 1]$,
\begin{equation*}
    |g'(r)| \leq \frac{1}{2}.
\end{equation*}

First, we present the plots of $|g'(r)|$ for $r \in [0, 1]$ with $4 \leq q \leq 11$ in Figure~\ref{fig:g}. We observe that for $4 \leq q \leq 11$, $|g'(r)| \leq {1}/{2}$ always holds.

\begin{figure*}[t!]
\centering
\begin{subfigure}{0.24\textwidth}
    \includegraphics[width=\textwidth]{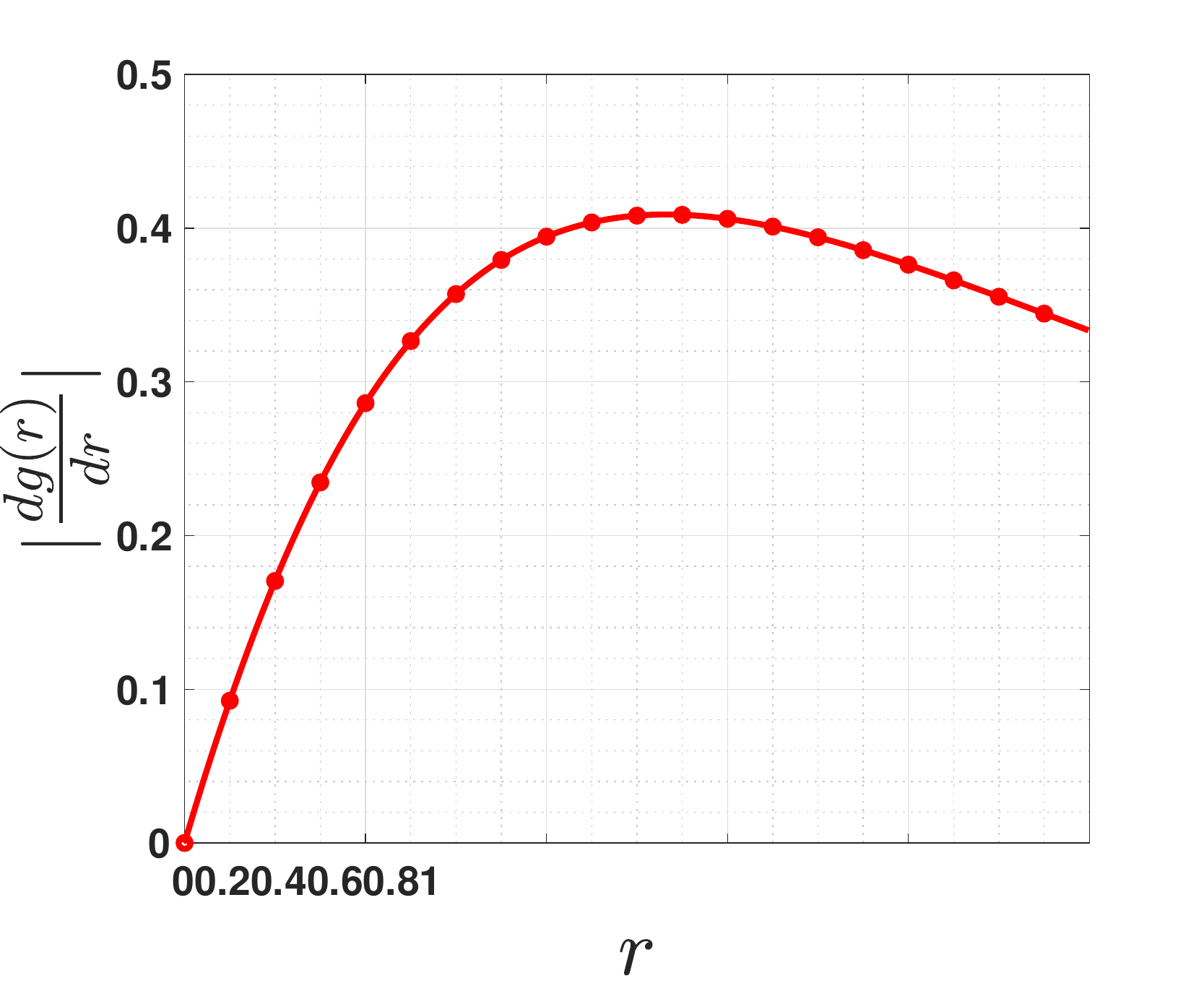}
    \caption{$q = 4$.}
\end{subfigure}
\hfill
\begin{subfigure}{0.24\textwidth}
    \includegraphics[width=\textwidth]{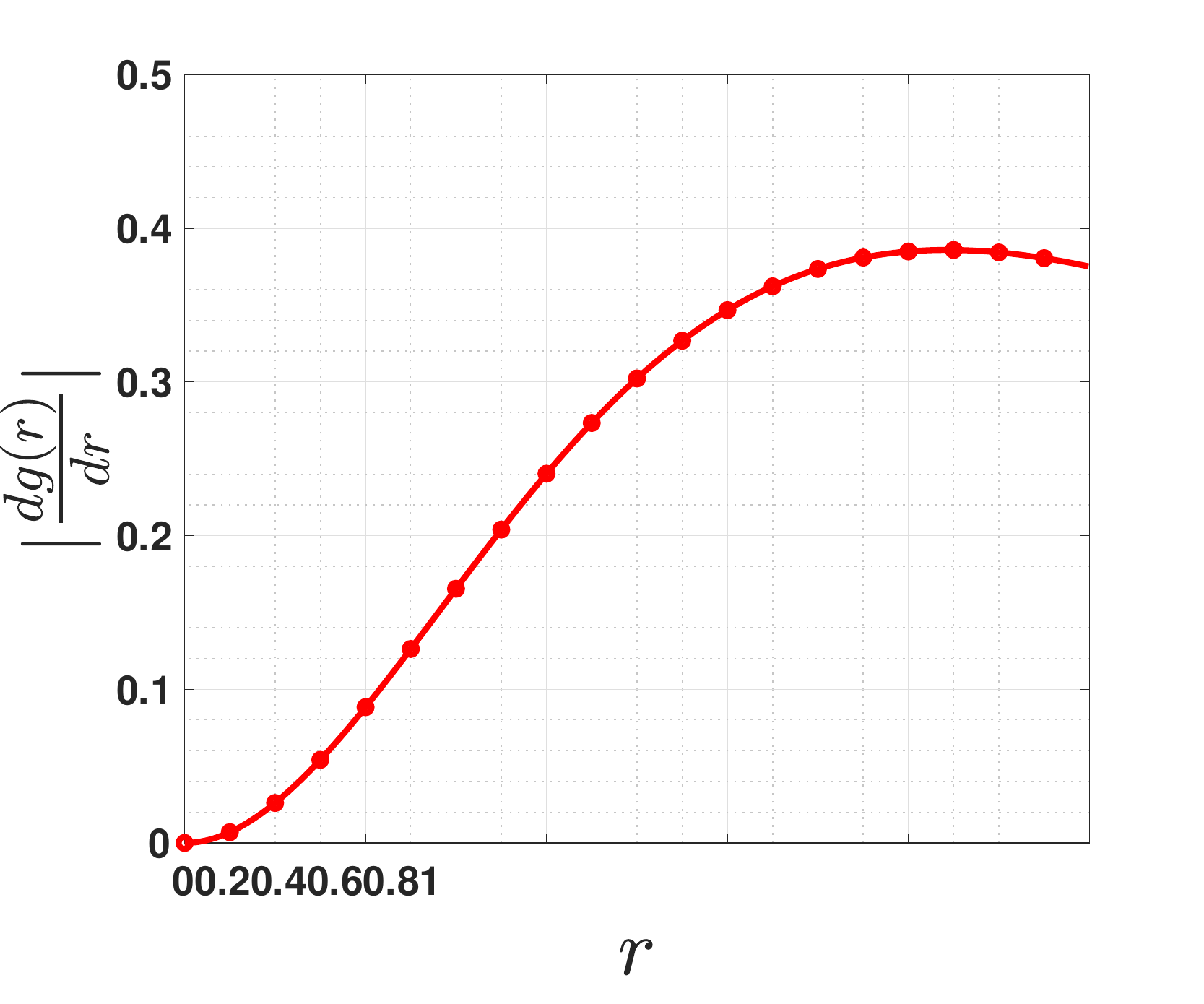}
    \caption{$q = 5$.}
\end{subfigure}
\hfill
\begin{subfigure}{0.24\textwidth}
    \includegraphics[width=\textwidth]{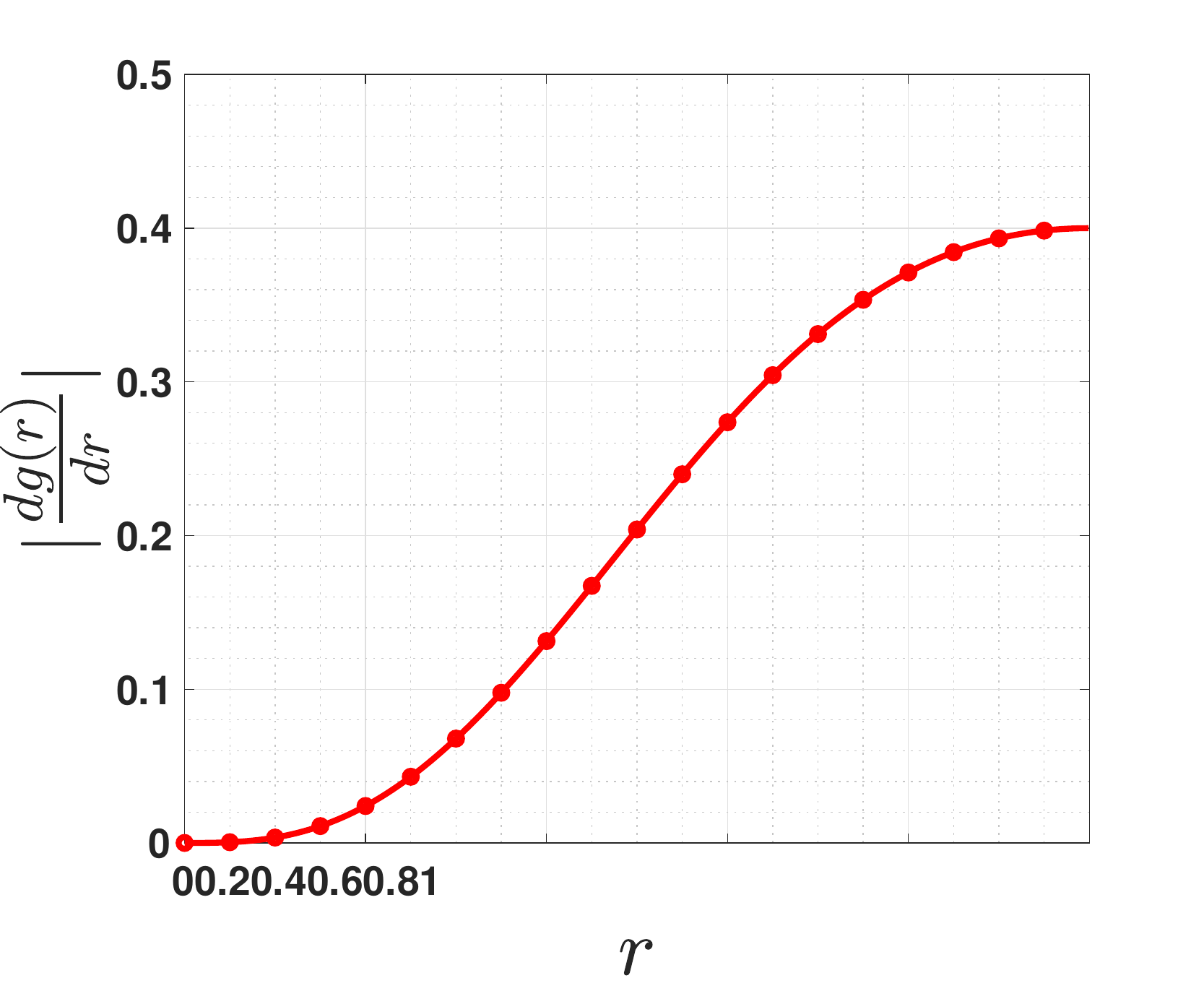}
    \caption{$q = 6$.}
\end{subfigure}
\begin{subfigure}{0.24\textwidth}
    \includegraphics[width=\textwidth]{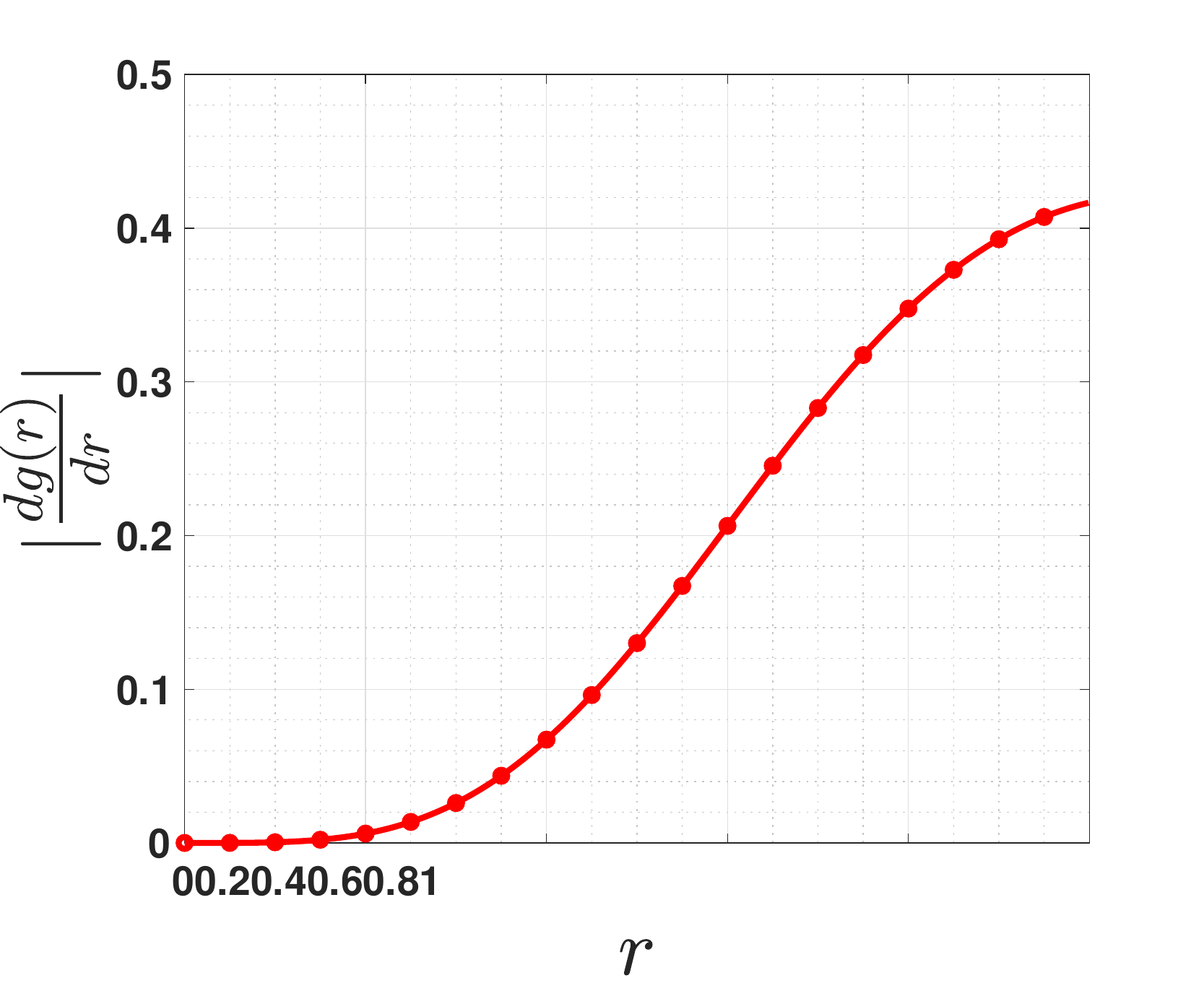}
    \caption{$q = 7$.}
\end{subfigure}
\begin{subfigure}{0.24\textwidth}
    \includegraphics[width=\textwidth]{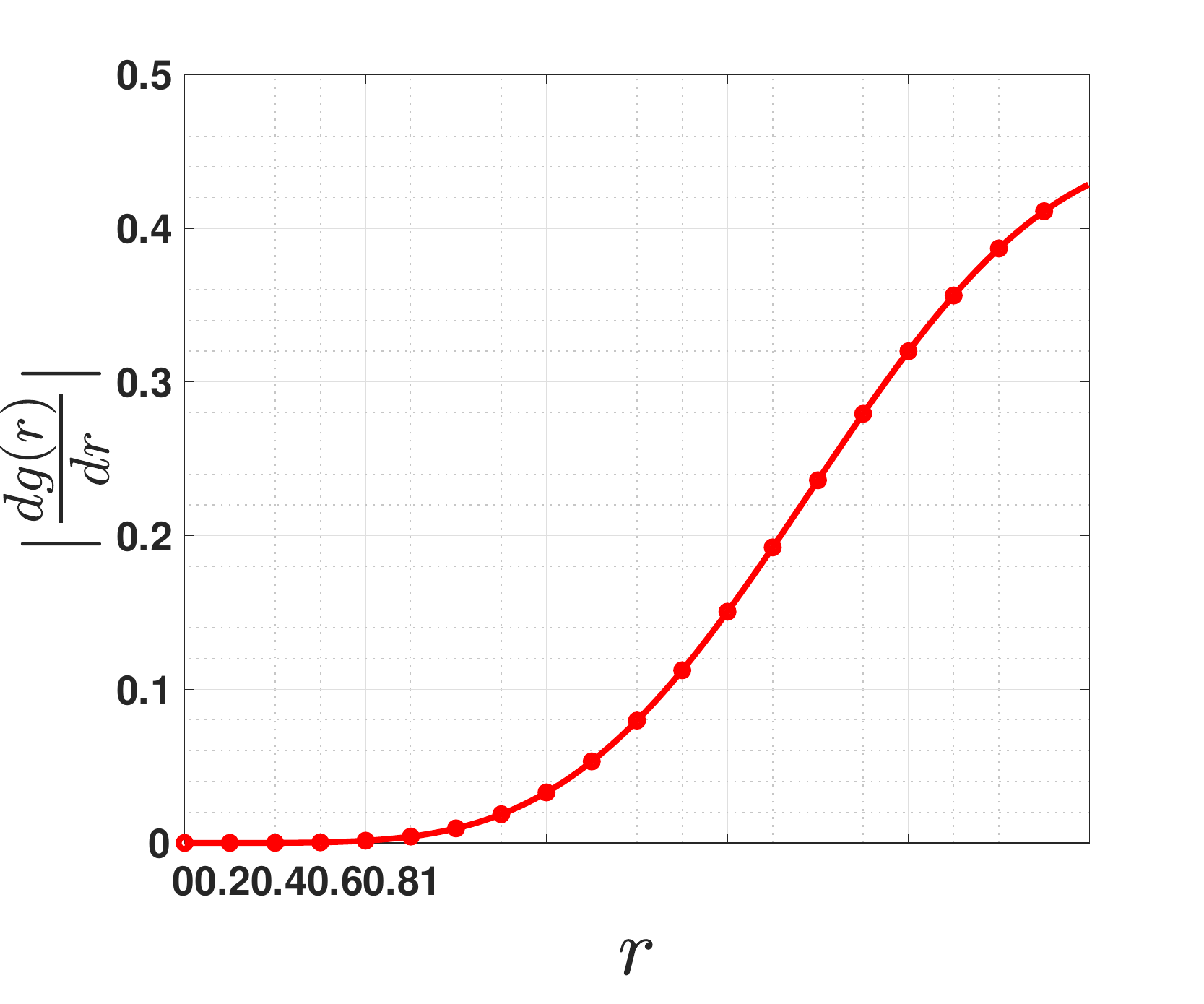}
    \caption{$q = 8$.}
\end{subfigure}
\begin{subfigure}{0.24\textwidth}
    \includegraphics[width=\textwidth]{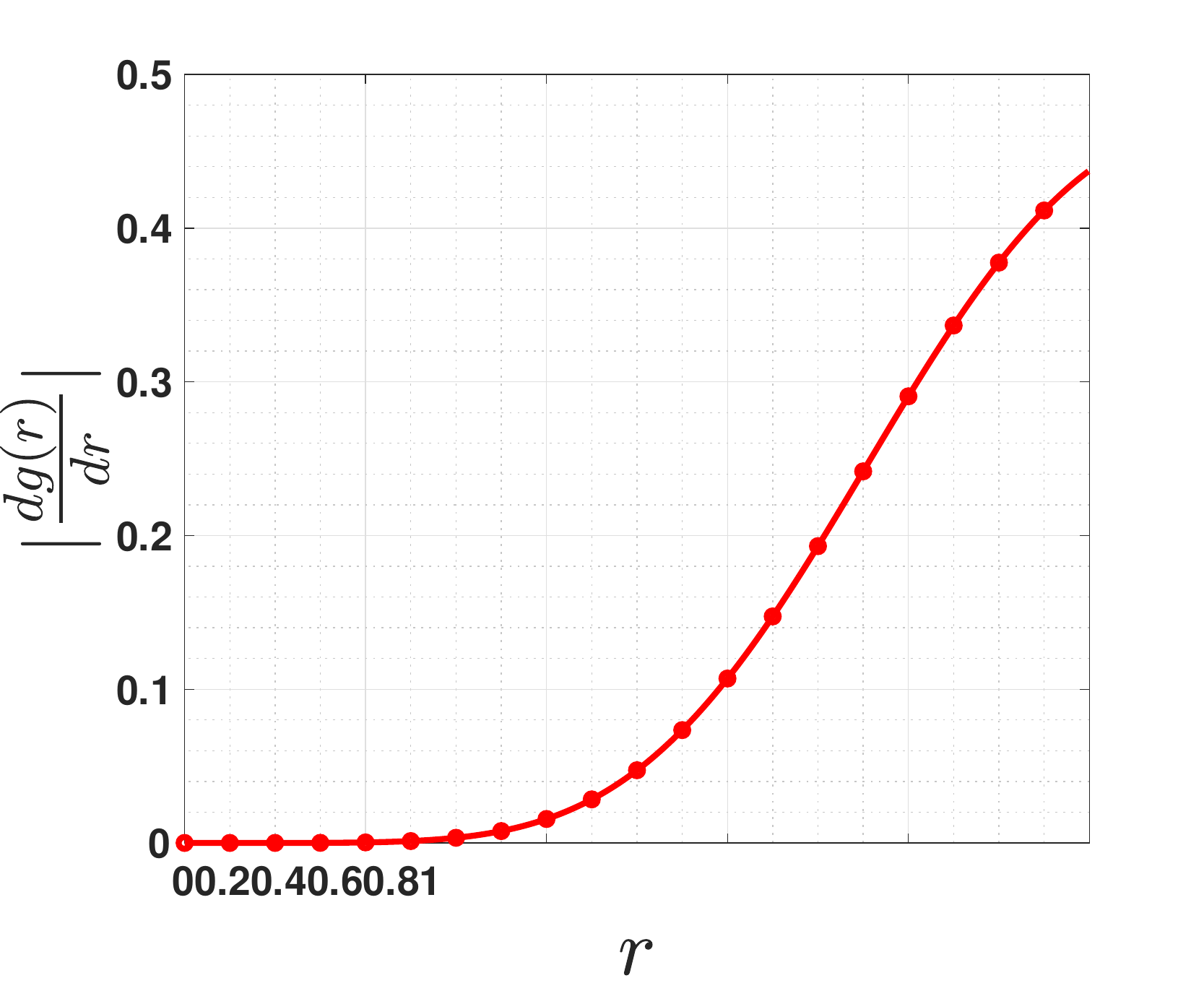}
    \caption{$q = 9$.}
\end{subfigure}
\begin{subfigure}{0.24\textwidth}
    \includegraphics[width=\textwidth]{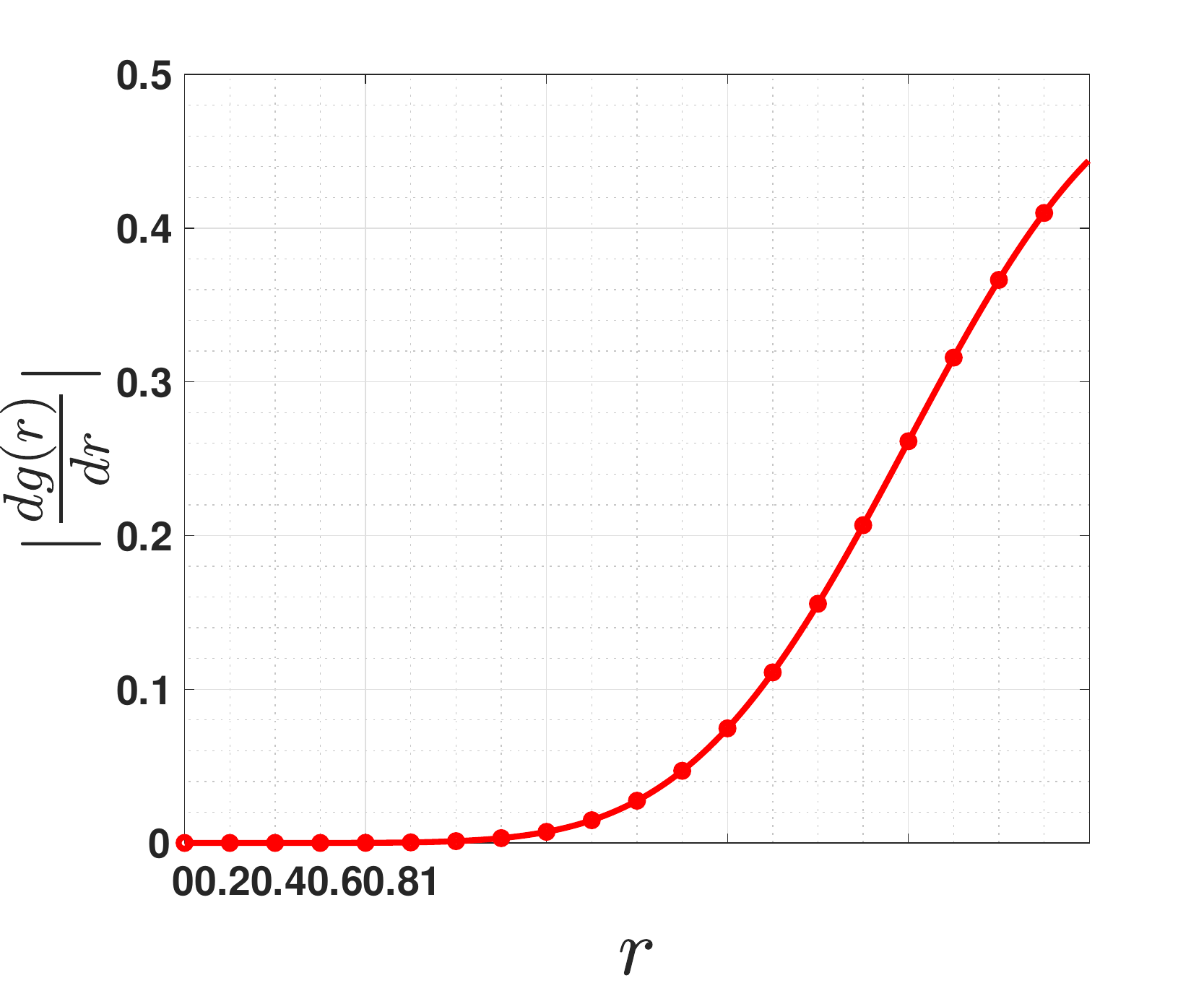}
    \caption{$q = 10$.}
\end{subfigure}
\begin{subfigure}{0.24\textwidth}
    \includegraphics[width=\textwidth]{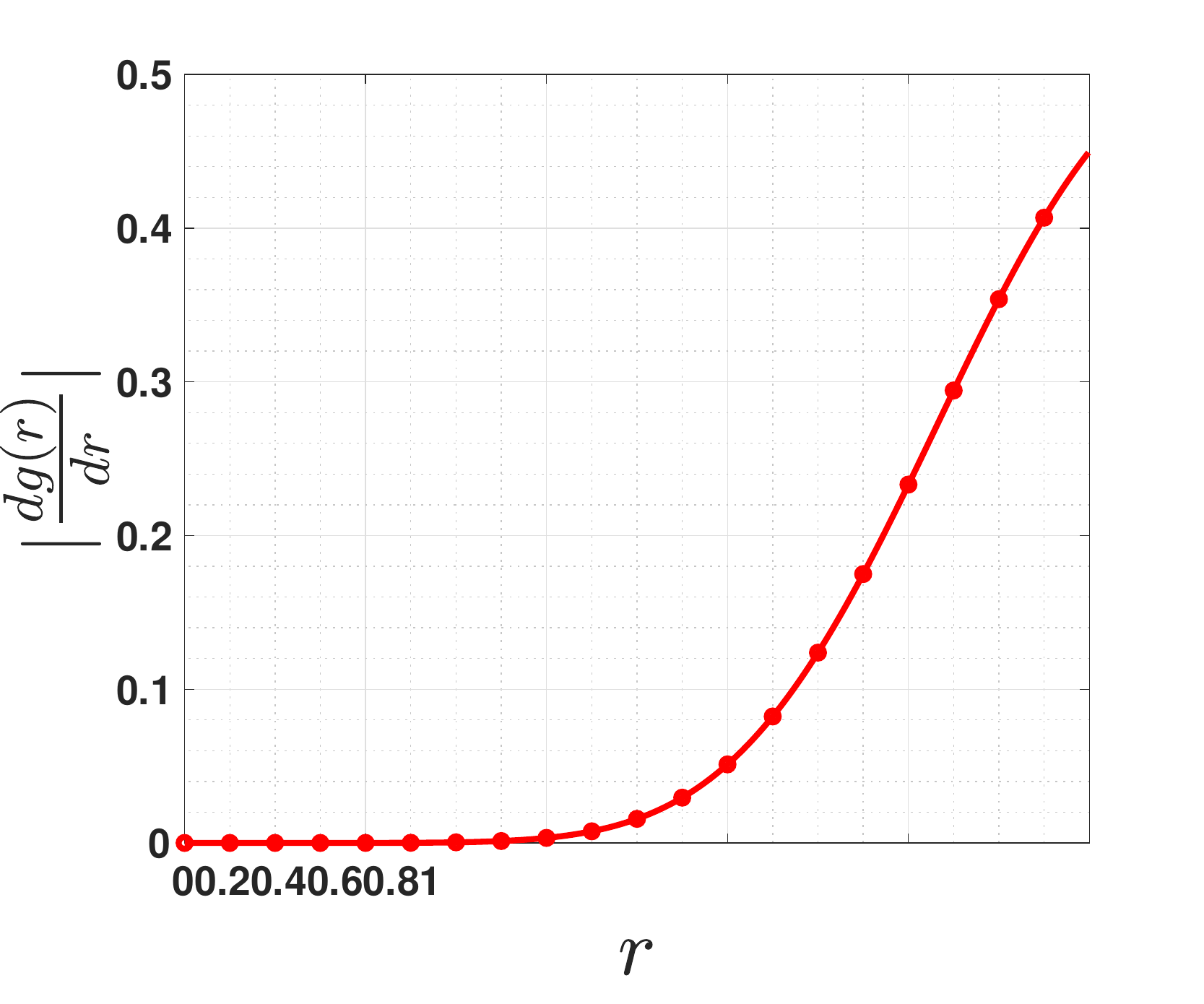}
    \caption{$q = 11$.}
\end{subfigure}
\caption{Plots of $|g'(r)|$ with $r \in [0, 1]$ for $4 \leq q \leq 11$.}
\label{fig:g}
\end{figure*}

Next, we prove that for $q \geq 12$ and any $r \in [0, 1]$, we have
\begin{equation*}
    |g'(r)| = \frac{(q - 1)r^{q - 2} - r^{2q - 4} - (q - 2)r^{q - 3}}{(1 - r^{q - 1})^2} \leq \frac{1}{2},
\end{equation*}
which is equivalent to
\begin{equation}\label{appendix:eq_1}
    r^{2q - 2} - 2r^{2q - 4} - 2r^{q - 1} + 2(q - 1)r^{q - 2} - 2(q - 2)r^{q - 3} + 1 \geq 0, \qquad \forall r \in [0, 1].
\end{equation}
Define the function $h(r)$ as
\begin{equation}\label{appendix:eq_2}
    h(r) := r^{2q - 2} - 2r^{2q - 4} - 2r^{q - 1} + 2(q - 1)r^{q - 2} - 2(q - 2)r^{q - 3} + 1.
\end{equation}
We obtain that
\begin{equation}\label{appendix:eq_3}
    \frac{dh(r)}{dr} = 2r^{q - 4}h^{(1)}(r),
\end{equation}
where
\begin{equation}\label{appendix:eq_4}
    h^{(1)}(r) := (q - 1)r^{q + 1} - 2(q - 2)r^{q - 1} - (q - 1)r^2 + (q - 1)(q - 2)r - (q - 2)(q - 3).
\end{equation}
Hence, we have that
\begin{equation}\label{appendix:eq_5}
    \frac{dh^{(1)}(r)}{dr} = (q - 1)h^{(2)}(r), 
\end{equation}
where
\begin{equation}\label{appendix:eq_6}
    h^{(2)}(r) := (q + 1)r^q - 2(q - 2)r^{q - 2} - 2r + q - 2.
\end{equation}
Therefore, we obtain that
\begin{equation}\label{appendix:eq_7}
    \frac{dh^{(2)}(r)}{dr} = h^{(3)}(r) := (q + 1)qr^{q - 1} - 2(q - 2)^2 r^{q - 3} - 2,
\end{equation}
and
\begin{equation}\label{appendix:eq_8}
    \frac{dh^{(3)}(r)}{dr} = r^{q - 4}h^{(4)}(r),
\end{equation}
where
\begin{equation}\label{appendix:eq_9}
    h^{(4)}(r) := q(q + 1)(q - 1)r^2 - 2(q - 2)^2 (q - 3).
\end{equation}
Now we define the function $l(q)$ as
\begin{equation*}
\begin{split}
    l(q) := & \quad 2(q - 2)^2 (q - 3) - q(q + 1)(q - 1) \\
    = & \quad q^3 - 14q^2 + 33q - 24 \\
    = & \quad q^2(q - 14) + 33(q - 1) + 9.
\end{split}
\end{equation*}
We observe that for $q \geq 14$, we have $l(q) > 0$ and we calculate that $l(12) = 84 > 0$ and $l(13) = 236 > 0$. Hence, we obtain that $l(q) > 0$ for all $q \geq 12$, which indicates that for all $r \in [0, 1]$,
\begin{equation*}
    r^2 \leq 1 < \frac{2(q - 2)^2 (q - 3)}{q(q + 1)(q - 1)},
\end{equation*}
\begin{equation*}
    q(q + 1)(q - 1)r^2 - 2(q - 2)^2 (q - 3) < 0.
\end{equation*}
Therefore, for all $r \in [0, 1]$, $h^{(4)}(r)$ defined in \eqref{appendix:eq_9} satisfies that $h^{(4)}(r) < 0$ and from \eqref{appendix:eq_8} we know that $\frac{dh^{(3)}(r)}{dr} < 0$. Hence, $h^{(3)}(r)$ defined in \eqref{appendix:eq_7} is decreasing function and $h^{(3)}(r) <= h^{(3)}(0) = -2 < 0$. We know that $ \frac{dh^{(2)}(r)}{dr} = h^{(3)}(r) < 0$, which implies that $h^{(2)}(r)$ defined in \eqref{appendix:eq_6} is decreasing function. So we have that $h^{(2)}(r) \geq h^{(2)}(1) = 1 > 0$. From \eqref{appendix:eq_5} we know that $\frac{dh^{(1)}(r)}{dr} > 0$ and $h^{(1)}(r)$ defined in \eqref{appendix:eq_4} is increasing function for $r \in [0, 1]$. Hence, we get that $h^{(1)}(r) \leq h^{(1)}(1) = 0$ and from \eqref{appendix:eq_3} we obtain that $h(r)$ defined in\eqref{appendix:eq_2} is decreasing function for $r \in [0, 1]$. Therefore, we have that $h(r) \geq h(1) = 0$ and condition \eqref{appendix:eq_1} holds for all $r \in [0, 1]$, which is equivalent to $|g'(r)| \leq {1}/{2}$. 

In summary, we proved that for any $q \geq 12$, we have $|g'(r)| \leq {1}/{2}$. Combining this with the results from Figure~\ref{fig:g}, we obtain that $|g'(r)| \leq {1}/{2}$ holds for all $q \geq 4$. Applying Banach's Fixed-Point Theorem from Lemma~\ref{lemma_2}, we prove the final conclusion \eqref{factors_convergence}.

\subsection{Proof of Theorem~\ref{theorem_3}}\label{sec:proof_of_theorem_3}

Notice that the gradient and the Hessian of the objective function \eqref{opt_problem} can be expressed as
\begin{equation*}
    \nabla{f(\theta)} = q\|A\theta - b\|^{q - 2}A^\top(A\theta - b) = q\|A\theta - b\|^{q - 2}A^\top A(\theta - \hat{\theta}),
\end{equation*}
\begin{equation*}
    \nabla^{2}{f(\theta)} =  q\|A\theta - b\|^{q - 2}A^\top A + q(q - 2)\|A\theta - b\|^{q - 4}A^\top(A\theta - b)(A\theta - b)^\top A.
\end{equation*}
Applying Lemma~\ref{lemma_1}, we can obtain that
\begin{equation*}
    \nabla^{2}{f(\theta)}^{-1} = \frac{(A^\top A)^{-1}}{q\|A\theta - b\|^{q - 2}} - \frac{(q - 2)(\theta - \hat{\theta})(\theta - \hat{\theta})^\top}{q(q - 1)\|A\theta - b\|^q}.
\end{equation*}
Hence, we have that for any $k \geq 1$,
\begin{equation*}
    \theta_k = \theta_{k - 1} - \nabla^2{f(\theta_{k - 1})}^{-1}\nabla{f(\theta_{k - 1})},
\end{equation*}
\begin{equation*}
    \theta_k - \hat{\theta} = \theta_{k - 1} - \hat{\theta} - \nabla^2{f(\theta_{k - 1})}^{-1}\nabla{f(\theta_{k - 1})}.
\end{equation*}
Notice that $b =  A\hat{\theta}$ by Assumption~\ref{ass_1} and
\begin{equation*}
\begin{split}
    & \nabla^2{f(\theta_{k - 1})}^{-1}\nabla{f(\theta_{k - 1})} \\
    = & \quad [\frac{(A^\top A)^{-1}}{q\|A\theta_{k - 1} - b\|^{q - 2}} - \frac{(q - 2)(\theta_{k - 1} - \hat{\theta})(\theta_{k - 1} - \hat{\theta})^\top}{q(q - 1)\|A\theta_{k - 1} - b\|^q}]q\|A\theta - b\|^{q - 2}A^\top A(\theta_{k - 1} - \hat{\theta}) \\
     = & \quad \theta_{k - 1} - \hat{\theta} - \frac{q - 2}{q - 1}\frac{(\theta_0 - \hat{\theta})^\top A^\top A(\theta_{k - 1} - \hat{\theta})}{\|A\theta_{k - 1} - b\|^2}(\theta_{k - 1} - \hat{\theta}) \\
    = & \quad \theta_{k - 1} - \hat{\theta} - \frac{q - 2}{q - 1}\frac{(A\theta_{k - 1} - b)^\top(A\theta_{k - 1} - b)}{\|A\theta_{k - 1} - b\|^2}(\theta_{k - 1} - \hat{\theta}) \\
    = & \quad \theta_{k - 1} - \hat{\theta} - \frac{q - 2}{q - 1}(\theta_{k - 1} - \hat{\theta}).
\end{split}
\end{equation*}
Therefore, we prove the conclusion that for any $k \geq 1$,
\begin{equation*}
    \theta_k - \hat{\theta} = \theta_{k - 1} - \hat{\theta} - \nabla^2{f(\theta_{k - 1})}^{-1}\nabla{f(\theta_{k - 1})} = \frac{q - 2}{q - 1}(\theta_{k - 1} - \hat{\theta}).
\end{equation*}

We observe that the iterations generated by Newton's method also satisfy the parallel property, i.e., all vectors $\{\theta_k - \hat{\theta}\}_{k = 0}^{\infty}$ are parallel to each other with the same direction. 

Notice that the function $h(r) = r^{q - 1} + r^{q - 2}$ is strictly increasing and $h(\frac{q - 2}{q - 1}) < 1$, $h(r_*) = 1$ as well as $h(\frac{2q - 3}{2q - 2}) > 1$. Hence, we know that $\frac{q - 2}{q - 1} < r_* < \frac{2q - 3}{2q - 2}$.

\subsection{Elaboration of Remark~\ref{remark_1}}\label{sec:remark_1}

\textbf{For the ease of presentation, we use $C_p$ to denote any constant that is independent of $d$, $n$, and $C_p$ can be varied case by case to simplify the proof.} From \citet{mou2019diffusion} and Lemma 6 of \citet{ren2022improving}, we have that, as long as $n = \Omega( (d \log d/\delta)^{2p})$, we have the following two uniform concentration results holding with probability $1 - \delta$:
\begin{equation}\label{remark_proof_0}
\begin{split}
    \sup_{\theta\in\mathbb{B}(\theta^*, r)} \|\nabla \mathcal{L}_n(\theta)) - \nabla \mathcal{L}(\theta))\| \leq & C_p (\|\theta^*\| + r)^{p-1} \sqrt{d\log(1/\delta)/n},\\
    \sup_{\theta\in\mathbb{B}(\theta^*, r)} \|\nabla^2 \mathcal{L}_n(\theta)) - \nabla^2 \mathcal{L}(\theta))\| \leq & C_p (\|\theta^*\| + r)^{p-2} \sqrt{d\log(1/\delta)/n}.
\end{split}
\end{equation}
Notice that we have
\begin{align*}
    \mathcal{L}(\theta) & = \mathbb{E}[(Y - (X^\top \theta)^p)^2] = \mathbb{E}[((X^\top \theta^*)^p + \zeta - (X^\top \theta)^p)^2] \\
    & =  \mathbb{E}[((X^\top \theta^*)^p - (X^\top \theta)^p)^2] + \mathbb{E}[2\zeta((X^\top \theta^*)^p - (X^\top \theta)^p)] + \mathbb{E}[\zeta^2] \\
    & = \mathbb{E}[((X^\top \theta^*)^p - (X^\top \theta)^p)^2] + \sigma^2,
\end{align*}
where we use the fact that $\zeta$ is independent of $X$ and $\mathbb{E}[\zeta] = 0$, $\mathbb{E}[\zeta^2] = \sigma^2$. Hence, we have that
\begin{align}
    \nabla \mathcal{L}(\theta) = 2p\mathbb{E}[((X^\top \theta)^p - (X^\top \theta^*)^p)(X^\top \theta)^{p - 1}X].
\end{align}
\begin{align}
    \nabla^2 \mathcal{L}(\theta) = 2p^2\mathbb{E}[(X^\top \theta)^{2p - 2}XX^\top] + 2p(p - 1)\mathbb{E}[((X^\top \theta)^p - (X^\top \theta^*)^p)(X^\top \theta)^{p - 2})XX^\top].
\end{align}
We denote $\mathcal{L}^{0}$ as the population loss function with respect to the assumption of $\theta^* = 0$. Therefore, we have
\begin{align}\label{remark_proof_1}
    \mathcal{L}^{0}(\theta) = \mathbb{E}[(X^\top \theta)^{2p}] + \sigma^2.
\end{align}
\begin{align}\label{remark_proof_2}
    \nabla \mathcal{L}^{0}(\theta) = 2p\mathbb{E}[(X^\top \theta)^{2p - 1}X].
\end{align}
\begin{align}\label{remark_proof_3}
    \nabla^2 \mathcal{L}^{0}(\theta) = 2p(2p - 1)\mathbb{E}[(X^\top \theta)^{2p - 2}XX^\top].
\end{align}
Hence, we have
\begin{align*}
    \|\nabla \mathcal{L}(\theta) -  \nabla \mathcal{L}^{0}(\theta)\| = C_p \mathbb{E}[(X^\top \theta^*)^p(X^\top \theta)^{p - 1}X] \leq C_p  \mathbb{E}[\|X\|^{2p}]\|\theta^*\|^p\|\theta\|^{p - 1}.
\end{align*}
Recall that $X$ is a Gaussian or sub-Gaussian random variable with $\mathbb{E}[\|X\|^{2p}] < +\infty$. For any $\theta$, we ahve that $\|\theta\| \leq \|\theta^*\| + \|\theta - \theta^*\|$ and in the low SNR regime, we have $\|\theta^*\| \leq C_1 (\frac{d}{n})^{\frac{1}{2p}}$. Hence, we have
\begin{align}\label{remark_proof_4}
    \sup_{\theta\in\mathbb{B}(\theta^*, r)} \|\nabla \mathcal{L}(\theta) -  \nabla \mathcal{L}^{0}(\theta)\|  \leq & C_p (\|\theta^*\| + r)^{p-1} \sqrt{d\log(1/\delta)/n}.
\end{align}
Similarly, we have that
\begin{align*}
    \|\nabla^2 \mathcal{L}(\theta) -  \nabla^2 \mathcal{L}^{0}(\theta)\| = C_p \mathbb{E}[(X^\top \theta^*)^p(X^\top \theta)^{p - 2}XX^\top] \leq C_p  \mathbb{E}[\|X\|^{2p}]\|\theta^*\|^p\|\theta\|^{p - 2}.
\end{align*}
\begin{align}\label{remark_proof_5}
    \sup_{\theta\in\mathbb{B}(\theta^*, r)} \|\nabla^2 \mathcal{L}(\theta) -  \nabla^2 \mathcal{L}^{0}(\theta)\|  \leq & C_p (\|\theta^*\| + r)^{p-2} \sqrt{d\log(1/\delta)/n}.
\end{align}
Leveraging \eqref{remark_proof_0}, \eqref{remark_proof_4} and \eqref{remark_proof_5}, we obtain that
\begin{equation}\label{remark_proof_6}
\begin{split}
    \sup_{\theta\in\mathbb{B}(\theta^*, r)} \|\nabla \mathcal{L}_n(\theta)) - \nabla \mathcal{L}^{0}(\theta))\| \leq & C_p (\|\theta^*\| + r)^{p-1} \sqrt{d\log(1/\delta)/n},\\
    \sup_{\theta\in\mathbb{B}(\theta^*, r)} \|\nabla^2 \mathcal{L}_n(\theta)) - \nabla^2 \mathcal{L}^{0}(\theta))\| \leq & C_p (\|\theta^*\| + r)^{p - 2} \sqrt{d \log (1/\delta)/n}.
\end{split}
\end{equation}
This explains the Remark~\ref{remark_1} of assumption $\theta^* = 0$ in section~\ref{sec:population_loss}. The errors between gradients and Hessians of the population loss with $\theta^* = 0$ and $\|\theta^{*}\| \leq C_{1} (d/n)^{1/(2p)}$ are upper bounded by the corresponding statistical errors between the population loss \eqref{pop_loss} and the empirical loss \eqref{eq:sample_least_square} in the low SNR regime, respectively. Hence, $\mathcal{L}^{0}$ and $\mathcal{L}$ can be treated as equivalent.

\subsection{Proof of Theorem~\ref{theorem:statistical_rate_BFGS}}\label{sec:proof_statistical_rate_BFGS}

In this section, we present the proof of Theorem~\ref{theorem:statistical_rate_BFGS}. We use $\mathcal{L}$ and $\mathcal{L}_n$ to denote the population objective and empirical objective. Also, as discussed in the previous section, we use $\mathcal{L}^0$ to refer to the population loss when the optimal solution is zero $\theta^*=0$. \textbf{For the ease of presentation, we use $C_p$ to denote any constant that is independent of $d$, $n$, and $C_p$ can be varied case by case to simplify the proof.} To prove the result for the iterates generated by the finite sample loss function, we control the gap between the gradient of $\mathcal{L}^0$ and $\mathcal{L}_n$. 
As discussed previously in \eqref{remark_proof_6} from \ref{sec:remark_1}, we can show that in the low SNR setting, we have
\begin{align}
    \sup_{\theta\in\mathbb{B}(\theta^*, r)} \|\nabla \mathcal{L}_n(\theta)) - \nabla \mathcal{L}^{0}(\theta))\| \leq & C_p (\|\theta^*\| + r)^{p-1} \sqrt{d\log(1/\delta)/n},\\
    \sup_{\theta\in\mathbb{B}(\theta^*, r)} \|\nabla^2 \mathcal{L}_n(\theta)) - \nabla^2 \mathcal{L}^{0}(\theta))\| \leq & C_p (\|\theta^*\| + r)^{p - 2} \sqrt{d \log (1/\delta)/n}.
\end{align}
We further can show that $\|\theta^*\| \leq \|\theta - \theta^*\|$ for any $\theta$. If this does not hold, then we have $\|\theta - \theta^*\| < \|\theta^*\| \leq C_1(\frac{d}{n})^{\frac{1}{2p}}$ by the definition of the low SNR regime, which indicates that $\theta$ has already achieved the optimal statistical radius. Therefore, the following bounds hold with probability of at least $1 - \delta$,
\begin{align}\label{proof_gradient}
    \sup_{\theta\in\mathbb{B}(\theta^*, r)} \|\nabla \mathcal{L}_n(\theta)) - \nabla \mathcal{L}^0(\theta))\| \leq & C_p r^{p-1} \sqrt{d\log(1/\delta)/n},
\end{align}
\vspace{-3mm}
\begin{align}\label{proof_Hessian}
    \sup_{\theta\in\mathbb{B}(\theta^*, r)} \|\nabla^2 \mathcal{L}_n(\theta)) - \nabla^2 \mathcal{L}^0(\theta))\| \leq & C_p r^{p - 2} \sqrt{d \log (1/\delta)/n}.
\end{align}
In the following proof, we assume that all the results hold with probability of at least $1 - \delta$. Given the fact that the difference between the gradient and Hessian of the finite sample loss, denoted as $\mathcal{L}_n$, and the population loss with the optimal value at zero, denoted by $\mathcal{L}^0$, is of the order of statistical accuracy, and considering that the iterates generated by running BFGS on $\mathcal{L}^0$ converge to the optimal solution at a linear rate, we show that the iterates generated by running BFGS on the finite sample loss reach the statistical accuracy in a logarithmic number of iterations of the required statistical accuracy. 

To simplify our presentation, we use $\{\theta_{t}\}_{t\geq 0}$ as the iterates generated by running BFGS on the population loss  $\mathcal{L}^0$ and $\{\theta_t^n\}_{t\geq 0}$ as the iterates generated by running BFGS on the finite sample loss $\mathcal{L}_n$. We assume that $\theta_0 = \theta_0^{n}$, i.e., the first iterates are the same for both population loss and empirical loss. Similarly, we
denote the inverse Hessian approximation matrix of BFGS applied to the loss $\mathcal{L}^{0}$ by $\{H_t\}_{t \geq 0}$ and the inverse Hessian approximation matrix of BFGS applied to the loss $\mathcal{L}_n$ by $\{H_t^n\}_{t \geq 0}$. We have that $H_0 = \nabla^{2}{\mathcal{L}^{0}(\theta_0)}^{-1}$ and $H_0^{n} = \nabla^{2}{\mathcal{L}_{n}(\theta^n_0)}^{-1}$. Given the results in Theorems \ref{theorem_1} and \ref{theorem_2}, we know that the iterates generated by BFGS on $\mathcal{L}^0$ converge to the optimal solution at a linear rate, i.e.,
\begin{equation}\label{proof_linear_rate}
    \theta_{t + 1} - \theta^{*} = r_t(\theta_{t} - \theta^{*}), \qquad 0 < r_l \leq r_t \leq r_h < 1, \qquad \forall t \geq 0,
\end{equation}
where $\{r_t\}_{t = 0}^{\infty}$ are the linear convergence rates and $r_l, r_h \in (0, 1)$ are the lower and upper bounds of the corresponding linear convergence rates.

More precisely, we show that if the total number of iterations $T $ that we run BFGS on $\mathcal{L}_n$ is order of log of the inverse of statistical accuracy at most, i.e., $T= O(\log(n/d))$, then for any $t\leq T$ we can show that the difference between the iterates generated by running BFGS on $\mathcal{L}^0$ and $\mathcal{L}_n$ is controlled and bounded above by 
\begin{align}\label{proof_induction}
    \|\theta_t^n - \theta_t\| \leq c_t \|\theta_{t - 1} - \theta^*\|^{1-p} \sqrt{d \log(1/\delta) / n},
\end{align}
where $c_t = \Theta(\exp(t))$. Now given the fact that $\|\theta_{t - 1} - \theta^*\|$ approaches zero at a linear rate, we will use induction to prove the main claim of Theorem~\ref{theorem:statistical_rate_BFGS}. We assume that for any $t < T$, we have that
\begin{align}\label{proof_assumption}
    c_t \|\theta_{t} - \theta^*\|^{-p} \sqrt{d \log(1/\delta)/n} \leq \frac{1}{C_p} \leq 1.
\end{align}
Otherwise, our results in Theorem~\ref{theorem:statistical_rate_BFGS} simply follow. To prove the claim in \eqref{proof_induction} we will use an induction argument. Before doing that, in the upcoming sections we prove the following intermediate results that will be used in the induction argument. 

\begin{lemma}\label{lemma_3}
We denote $\lambda_{\max}(A)$ and $\lambda_{\min}(A)$ as the largest and smallest eigenvalue of the matrix $A$, we have
\begin{align*}
    & \|\nabla \mathcal{L}^0(\theta)\| \leq C_{p0}\|\theta - \theta^*\|^{2p - 1}, \qquad \lambda_{\max}(\nabla^2 \mathcal{L}^0(\theta))) \leq  C_{p1}\|\theta - \theta^*\|^{2p - 2}, \qquad \forall \theta \in \mathbb{R}^{d}, \\
    & \|\nabla \mathcal{L}^0(\theta) - \nabla \mathcal{L}^0(\theta')\| \leq C_{p1}\|\theta - \theta'\|(\|\theta - \theta^*\| + \|\theta' - \theta^*\|)^{2p - 2}, \qquad \forall \theta, \theta' \in \mathbb{R}^{d}, \\
    & \lambda_{\min}(\nabla^2 \mathcal{L}^0(\theta)) \geq  C_{p2}\|\theta - \theta^*\|^{2p-2}, \qquad \lambda_{\min}(\nabla^2 \mathcal{L}_{n}(\theta_0)) \geq  C_{p3}\|\theta_0 - \theta^*\|^{2p-2},
\end{align*}
where $C_{p0}, C_{p1}, C_{p2}$ and $C_{p3}$ are constants that only depend on $p$ and the last inequalities only holds for $\theta_0$.
\end{lemma}

\begin{proof}
    Recall that from \eqref{remark_proof_2} and \eqref{remark_proof_3}, we have 
    $$
    \nabla \mathcal{L}^{0}(\theta) = 2p\mathbb{E}[(X^\top \theta)^{2p - 1}X], \quad \nabla^2 \mathcal{L}^{0}(\theta) = 2p(2p - 1)\mathbb{E}[(X^\top \theta)^{2p - 2}XX^\top].
    $$
    Using Cauchy–Schwarz inequality, we get that
    \begin{align*}
        & \|\nabla \mathcal{L}^0(\theta))\| =  2p\|\mathbb{E}[(X^\top \theta)^{2p - 1}X]\| \leq 2p\|\mathbb{E}[(\|X\| \|\theta\|)^{2p - 1}X]\| \leq 2p\mathbb{E}[\|X\|^{2p - 1}\|X\|]  \|\theta\|^{2p - 1} \\
        & \leq 2p\mathbb{E}[\|X\|^{2p}]  (\|\theta - \theta^*\| + \|\theta^*\|)^{2p - 1} \leq 2p\mathbb{E}[\|X\|^{2p}]  (2\|\theta - \theta^*\|)^{2p - 1} \leq C_{p0}\|\theta - \theta^*\|^{2p - 1},
    \end{align*}
    where we use the fact that $\|\theta^*\| \leq \|\theta - \theta^*\|$ for any $\theta$. If this does not hold, then we have $\|\theta - \theta^*\| < \|\theta^*\| \leq C_1(\frac{d}{n})^{\frac{1}{2p}}$ by the definition of the low SNR regime, which indicates that $\theta$ has already achieved the optimal statistical radius. Similarly, we have that
    \begin{align*}
        & \lambda_{\max}(\nabla^2 \mathcal{L}^0(\theta))) = \|\nabla^2 \mathcal{L}^{0}(\theta)\| = 2p(2p - 1)\|\mathbb{E}[(X^\top \theta)^{2p - 2}XX^\top]\| \\
        & \leq 2p(2p - 1)\|\mathbb{E}[(\|X\| \|\theta\|)^{2p - 2}XX^\top]\| \leq 2p(2p - 1)\mathbb{E}[\|X\|^{2p - 2}\|X\|^2]  \|\theta\|^{2p - 2} \\
        & \leq 2p(2p - 1)\mathbb{E}[\|X\|^{2p}]  (\|\theta^*\| + \|\theta - \theta^*\|)^{2p - 2} \leq C_{p1}\|\theta - \theta^*\|^{2p - 2},
    \end{align*}
    where we use the same argument that $\|\theta^*\| \leq \|\theta - \theta^*\|$. Using Taylor's Theorem, we have that
    \begin{align*}
        \nabla \mathcal{L}^0(\theta) - \nabla \mathcal{L}^0(\theta') = \nabla^2 \mathcal{L}^0(\tau \theta + (1 - \tau)\theta') (\theta - \theta'),
    \end{align*}
    where $\tau \in [0, 1]$. Therefore, for any $\theta, \theta'$, we have that
    \begin{align*}
        & \|\nabla \mathcal{L}^0(\theta) - \nabla \mathcal{L}^0(\theta')\| \leq \|\nabla^2 \mathcal{L}^0(\tau \theta + (1 - \tau)\theta')\| \|\theta - \theta'\| \\
        & \leq C_{p1}\|\tau \theta + (1 - \tau)\theta' - \theta^*\|^{2p - 2}\|\theta - \theta'\| = C_{p1}\|\tau (\theta - \theta^*) + (1 - \tau)(\theta' - \theta^*)\|^{2p - 2}\|\theta - \theta'\| \\
        & \leq C_{p1}\|\theta - \theta'\|(\|\theta - \theta^*\| + \|\theta - \theta^*\|)^{2p - 2}.
    \end{align*}
    Notice that $\nabla^2 \mathcal{L}^0(\theta)$ are positive definite for any $\theta$ and $\nabla^2 \mathcal{L}_{n}(\theta_0)$ is positive definite, hence there exists $C_{p2}$ and $C_{p3}$ such that $\lambda_{\min}(\nabla^2 \mathcal{L}^0(\theta)) \geq  C_{p2}\|\theta - \theta^*\|^{2p-2}$ for any $\theta$ and $\lambda_{\min}(\nabla^2 \mathcal{L}_{n}(\theta_0)) \geq  C_{p3}\|\theta_0 - \theta^*\|^{2p-2}$.
    
\end{proof}

\subsubsection{Induction base}

Now we use the induction argument to prove the claim in \eqref{proof_induction}. Since the initial iterate for running BFGS on $\mathcal{L}^0$ and $\mathcal{L}_n$ are the same, we have $\theta_0^n = \theta_0$. 
Now for the iterates generated after the first  step of BFGS, we can show that the error between the iterates of two losses $\|\theta_1^n - \theta_1\|$ is bounded above by 
\begin{align}\label{proof_base_1}
\begin{split}
    & \|\theta_1^n - \theta_1\| = \|(\nabla^2 \mathcal{L}_n(\theta_0))^{-1} \nabla \mathcal{L}_n(\theta_0) - (\nabla^2 \mathcal{L}^{0}(\theta_0))^{-1} \nabla \mathcal{L}^{0}(\theta_0)\|\\
    \leq &  \|\nabla^2 \mathcal{L}_n(\theta_0)^{-1} - \nabla^2 \mathcal{L}^{0}(\theta_0)^{-1}\| \|\nabla \mathcal{L}_n(\theta_0)\| + \|\nabla^2 \mathcal{L}^{0}(\theta_0)^{-1}\|\|\nabla \mathcal{L}_n(\theta_0) - \nabla \mathcal{L}^{0}(\theta_0)\|
\end{split}
\end{align}
We observe that for invertible matrices $A$ and $B$, we have $(A^{-1} - B^{-1}) = A^{-1}(B - A)B^{-1}$. Given this observation and the results in Lemma~\ref{lemma_3} and \eqref{proof_Hessian}, we can show that 
\begin{align*}
\begin{split}
    & \|\nabla^2 \mathcal{L}_n(\theta_0)^{-1} - \nabla^2 \mathcal{L}^{0}(\theta_0)^{-1}\|
    \leq \|\nabla^2 \mathcal{L}_n(\theta_0)^{-1}\| \|\nabla^2 \mathcal{L}_n(\theta_0) - \nabla^2 \mathcal{L}^{0}(\theta_0)\|\|\nabla^2 \mathcal{L}^{0}(\theta_0)^{-1}\|\\
    & \leq C_p \|\theta_0 - \theta^*\|^{2-3p} \sqrt{d\log(1/\delta)/n},
\end{split}
\end{align*}
Applying this bound into \eqref{proof_base_1} and using the results in Lemma~\ref{lemma_3}, the bounds in \eqref{proof_gradient}, we obtain that 
\begin{align}\label{proof_base_2}
    \|\theta_1^n - \theta_1\| \leq C_p \|\theta_0 - \theta^*\|^{1-p} \sqrt{d\log(1/\delta) / n}.
\end{align}
Notice that from \eqref{proof_base_2}, we know that \eqref{proof_induction} holds for $t = 1$. Hence, the base of induction is complete. 

\subsubsection{Induction hypothesis and step}

Now we assume for any $k \leq t$, \eqref{proof_induction} holds, i.e., we have 
$$
\|\theta_k^n - \theta_k\| \leq c_k \|\theta_{k - 1} - \theta^*\|^{1-p} \sqrt{d \log(1/\delta) / n}.
$$
Consider $k = t + 1$, we have that
\begin{equation}\label{proof_induction_1}
\begin{split}
    & \|\theta_{t+1}^n - \theta_{t+1}\| = \|\theta_{t}^n - H_{t}^n \nabla \mathcal{L}_n(\theta_t^n) - \theta_{t} + H_t \nabla \mathcal{L}^{0}(\theta_t)\| \\
    & \leq \|\theta_{t}^n - \theta_{t}\| + \|H_{t}^n \nabla \mathcal{L}_n(\theta_t^n) - H_t \nabla \mathcal{L}^{0}(\theta_t) \|.
\end{split}
\end{equation}
Recall update rules for the Hessian approximation matrices:
\begin{align*}
    H_{t}^n = \left(I - \frac{s_{t-1}^n (u_{t-1}^n)^\top}{(u_{t-1}^n)^\top s_{t-1}^n}\right)H_{t-1}^n \left(I - \frac{u_{t-1}^n (s_{t-1}^n)^\top}{(s_{t-1}^n)^\top u_{t-1}^n}\right) + \frac{s_{t-1}^n (s_{t-1}^n)^\top}{(s_{t-1}^n)^\top u_{t-1}^n},
\end{align*}
\begin{align*}
    H_{t} = \left(I - \frac{s_{t-1} (u_{t-1})^\top}{(u_{t-1})^\top s_{t-1}}\right)H_{t-1} \left(I - \frac{u_{t-1} (s_{t-1})^\top}{(s_{t-1})^\top u_{t-1}}\right) + \frac{s_{t-1} (s_{t-1})^\top}{(s_{t-1})^\top u_{t-1}},
\end{align*}
\begin{align*}
    & s_{t - 1}^{n} = \theta_{t}^{n} - \theta_{t - 1}^{n}, \qquad u_{t - 1}^{n} = \nabla \mathcal{L}_{n}(\theta_{t}^{n}) - \nabla \mathcal{L}_{n}(\theta_{t - 1}^{n}),\\
    & s_{t - 1} = \theta_{t} - \theta_{t - 1}, \qquad u_{t - 1} = \nabla \mathcal{L}^0(\theta_{t}) - \nabla \mathcal{L}^0(\theta_{t - 1}).
\end{align*}
Moreover, based on \eqref{population_property}  we have 
\begin{align}
    \left(I - \frac{u_{t-1} s_{t-1}^\top}{s_{t-1}^\top u_{t-1}}\right)\nabla \mathcal{L}^0(\theta_t) = 0,
\end{align}
which implies that $H_t \nabla \mathcal{L}^0(\theta_t)$ can be simplified as $\frac{s_{t-1} s_{t-1}^\top}{s_{t-1}^\top u_{t-1}} \nabla \mathcal{L}^0(\theta_t)$. Now we proceed to bound the gap between the BFGS descent direction on $\mathcal{L}_n$ and $\mathcal{L}^0$, which can be upper bounded by
\begin{equation}\label{proof_induction_2}
\begin{split}
    & \|H_{t}^n \nabla \mathcal{L}_n(\theta_t^n) - H_t \nabla \mathcal{L}^0 (\theta_t)\| = \left\|H_t^n \nabla \mathcal{L}_n(\theta_t^n) - \frac{s_{t-1} s_{t-1}^\top}{s_{t-1}^\top u_{t-1}} \nabla \mathcal{L}^0(\theta_t)\right\|\\
    & \leq \left\|\left(I - \frac{s_{t-1}^n (u_{t-1}^n)^\top}{(u_{t-1}^n)^\top s_{t-1}^n}\right)H_{t-1}^n \left(I - \frac{u_{t-1}^n (s_{t-1}^n)^\top}{(s_{t-1}^n)^\top u_{t-1}^n}\right) \nabla \mathcal{L}_n(\theta_t^n) \right\| \\
    & + \left\|\frac{s_{t-1}^n (s_{t-1}^n)^\top}{(s_{t-1}^n)^\top u_{t-1}^n} \nabla \mathcal{L}_n(\theta_t^n) - \frac{s_{t-1} s_{t-1}^\top}{s_{t-1}^\top u_{t-1}} \nabla \mathcal{L}^0(\theta_t)\right\|,
\end{split}
\end{equation}
Now we bound these two terms separately. The first term in \eqref{proof_induction_2} can be bounded above by
\begin{equation}\label{proof_induction_3}
\begin{split}
    & \left\|\left(I - \frac{s_{t-1}^n (u_{t-1}^n)^\top}{(u_{t-1}^n)^\top s_{t-1}^n}\right)H_{t-1}^n \left(I - \frac{u_{t-1}^n (s_{t-1}^n)^\top}{(s_{t-1}^n)^\top u_{t-1}^n}\right) \nabla \mathcal{L}_n(\theta_t^n) \right\|\\
    & \leq \left\|I - \frac{s_{t-1}^n (u_{t-1}^n)^\top}{(u_{t-1}^n)^\top s_{t-1}^n}\right\| \|H_{t-1}^n\| \left\|\left(I - \frac{u_{t-1}^n (s_{t-1}^n)^\top}{(s_{t-1}^n)^\top u_{t-1}^n}\right) \nabla \mathcal{L}_n(\theta_t^n)\right\|\\
    & \leq \|H_{t-1}^n\| \left\|\left(I - \frac{u_{t-1}^n (s_{t-1}^n)^\top}{(s_{t-1}^n)^\top u_{t-1}^n}\right) \nabla \mathcal{L}_n(\theta_t^n)\right\|,
\end{split}
\end{equation}
where we use the fact that $\left\|I - \frac{s_{t-1}^n (u_{t-1}^n)^\top}{(u_{t-1}^n)^\top s_{t-1}^n}\right\| \leq 1$. 
Now using the fact that 
$$
\left(I - \frac{u_{t-1} s_{t-1}^\top}{s_{t-1}^\top u_{t-1}}\right)\nabla \mathcal{L}^{0}(\theta_t) = 0,
$$
we can further upper bound $\left\|\left(I - \frac{u_{t-1}^n (s_{t-1}^n)^\top}{(s_{t-1}^n)^\top u_{t-1}^n}\right) \nabla \mathcal{L}_n(\theta_t^n)\right\|$ by
\begin{equation}\label{proof_induction_4}
\begin{split}
    & \left\|\left(I - \frac{u_{t-1}^n (s_{t-1}^n)^\top}{(s_{t-1}^n)^\top u_{t-1}^n}\right) \nabla \mathcal{L}_n(\theta_t^n)\right\| \\
    & = \left\|\left(I - \frac{u_{t-1}^n (s_{t-1}^n)^\top}{(s_{t-1}^n)^\top u_{t-1}^n}\right) \nabla \mathcal{L}_n(\theta_t^n) - \left(I - \frac{u_{t-1} s_{t-1}^\top}{s_{t-1}^\top u_{t-1}}\right)\nabla \mathcal{L}^{0}(\theta_t)\right\|\\
    & \leq \|\nabla \mathcal{L}_n(\theta_t^n) - \nabla \mathcal{L}^{0}(\theta_t)\| + \left\|\frac{u_{t-1}^n {s_{t-1}^n}^\top \nabla \mathcal{L}_n(\theta_t^n)}{(s_{t-1}^n)^\top u_{t-1}^n} - \frac{u_{t-1}^n s_{t-1}^\top \nabla \mathcal{L}^{0}(\theta_t)}{s_{t-1}^\top u_{t-1}}\right\| \\
    & + \left\|\frac{u_{t-1}^n s_{t-1}^\top \nabla \mathcal{L}^{0}(\theta_t)}{s_{t-1}^\top u_{t-1}} - \frac{u_{t-1} s_{t-1}^\top \nabla \mathcal{L}^{0}(\theta_t)}{s_{t-1}^\top u_{t-1}}\right\|\\
    & \leq \|\nabla \mathcal{L}_n(\theta_t^n) - \nabla \mathcal{L}^{0}(\theta_t)\| + \|u_{t-1}^n\| \left|\frac{(s_{t-1}^n)^\top \nabla \mathcal{L}_n(\theta_t^n)}{(s_{t-1}^n)^\top u_{t-1}^n} - \frac{s_{t-1}^\top \nabla \mathcal{L}^{0}(\theta_t)}{s_{t-1}^\top u_{t-1}}\right| \\
    & + \|u_{t-1}^n - u_{t-1}\| |\frac{s_{t-1}^\top \nabla \mathcal{L}^{0}(\theta_t)}{s_{t-1}^\top u_{t-1}}|,
\end{split}
\end{equation}
Next we proceed to bound the second term in \eqref{proof_induction_2} .  The second term can be bounded as
\begin{align}\label{proof_induction_5}
    & \left\|\frac{s_{t-1}^n (s_{t-1}^n)^\top}{(s_{t-1}^n)^\top u_{t-1}^n} \nabla \mathcal{L}_n(\theta_t^n) - \frac{s_{t-1} s_{t-1}^\top}{s_{t-1}^\top u_{t-1}} \nabla \mathcal{L}^{0}(\theta_t)\right\|\nonumber\\
    & \leq \left\|\frac{s_{t-1}^n (s_{t-1}^n)^\top}{(s_{t-1}^n)^\top u_{t-1}^n} \nabla \mathcal{L}_n(\theta_t^n) - \frac{s_{t-1}^n s_{t-1}^\top}{s_{t-1}^\top u_{t-1}} \nabla \mathcal{L}^{0}(\theta_t)\right\| + \left\|\frac{s_{t-1}^n s_{t-1}^\top}{s_{t-1}^\top u_{t-1}} \nabla \mathcal{L}^{0}(\theta_t) - \frac{s_{t-1} s_{t-1}^\top}{s_{t-1}^\top u_{t-1}} \nabla \mathcal{L}^{0}(\theta_t)\right\|\nonumber\\
    & \leq \|s_{t-1}^n\| \left|\frac{(s_{t-1}^n)^\top \nabla \mathcal{L}_n(\theta_t^n)}{(s_{t-1}^n)^\top u_{t-1}^n} - \frac{s_{t-1}^\top \nabla \mathcal{L}^{0}(\theta_t)}{s_{t-1}^\top u_{t-1}}\right| + \|s_{t-1} - s_{t-1}^n\| |\frac{s_{t-1}^\top \nabla \mathcal{L}^{0}(\theta_t)}{s_{t-1}^\top u_{t-1}}|.
\end{align}

Putting together the upper bounds in \eqref{proof_induction_3}, \eqref{proof_induction_4}, and \eqref{proof_induction_5} into \eqref{proof_induction_2}, we obtain that 
\begin{equation}\label{proof_induction_6}
\begin{split}
    &\|H_{t}^n \nabla \mathcal{L}_n(\theta_t^n) - H_t \nabla \mathcal{L}^0 (\theta_t)\| \\
    & \leq \|H_{t-1}^n\| \|\nabla \mathcal{L}_n(\theta_t^n) - \nabla \mathcal{L}^{0}(\theta_t)\| \\
    & + (\|H_{t-1}^n\| \|u_{t-1}^n\| + \|s_{t-1}^n\|) \left|\frac{(s_{t-1}^n)^\top \nabla \mathcal{L}_n(\theta_t^n)}{(s_{t-1}^n)^\top u_{t-1}^n} - \frac{s_{t-1}^\top \nabla \mathcal{L}^{0}(\theta_t)}{s_{t-1}^\top u_{t-1}}\right| \\
    & + (\|H_{t-1}^n\|\|u_{t-1}^n - u_{t-1}\| + \|s_{t-1} - s_{t-1}^n\|) |\frac{s_{t-1}^\top \nabla \mathcal{L}^{0}(\theta_t)}{s_{t-1}^\top u_{t-1}}|.
\end{split}
\end{equation}
In the following lemmas, we establish upper bounds for the expressions in the right hand side of \eqref{proof_induction_6}.

\begin{lemma}\label{lemma_4}
The norm of variable variation for the finite sample loss $s_{t-1}^n$ and its difference form the variable variation for the population loss  $s_{t-1}$ are bounded above by
\begin{align*}
    \|s_{t-1}^n - s_{t-1}\|\leq (c_{t} + c_{t-1}) \|\theta_{t-1} - \theta^*\|^{1 - p} \sqrt{d\log(1/\delta)/n}.
\end{align*}
Moreover, this result implies that
\begin{align*}
    \|s_{t-1}^n\| \leq \|\theta_{t-1} - \theta^*\|.
\end{align*}

\begin{proof}
    The first result simply follows from the fact that 
    \begin{align*}
        & \|s_{t-1}^n - s_{t-1}\| \leq \|\theta_t^n - \theta_t\| + \|\theta_{t-1}^n - \theta_{t-1}\| \\
        & \leq (c_t \|\theta_{t-1} - \theta^*\|^{1-p} + c_{t-1} \|\theta_{t-2} - \theta^*\|^{1-p}) \sqrt{d \log(1/\delta)/n} \\
        & \leq (c_{t} + c_{t-1} r_{t-1}^{p-1}) \|\theta_{t-1} - \theta^*\|^{1 - p} \sqrt{d\log(1/\delta)/n} \\
        & \leq (c_{t} + c_{t-1}) \|\theta_{t-1} - \theta^*\|^{1 - p} \sqrt{d\log(1/\delta)/n},
    \end{align*}
    where we used the induction assumption \eqref{proof_induction} and the linear convergence results \eqref{proof_linear_rate} for the iterates generated on the population loss. The second claim can be also proved following   
    \begin{align*}
        & \|s_{t-1}^n\| \leq \|s_{t-1}\| + \|s_{t-1}^n - s_{t-1}\| =  \|\theta_{t} - \theta^* - (\theta_{t - 1} - \theta^*)\| + \|\theta_t^n - \theta_t\| + \|\theta_{t-1}^n - \theta_{t-1}\| \\
        & \leq (1 - r_{t-1}) \|\theta_{t-1} - \theta^*\| +  c_t \|\theta_{t-1} - \theta^*\|^{1-p}\sqrt{d\log(1/\delta)/n} \\
        & + c_{t-1} \|\theta_{t-2} - \theta^*\|^{1-p}\sqrt{d\log(1/\delta)/n} \\
        & \leq (1 - r_{t-1}) \|\theta_{t-1} - \theta^*\| + \frac{1 + 1/r_{t-2}}{C_p} \|\theta_{t-1} - \theta^*\| \leq \|\theta_{t-1} - \theta^*\|,
    \end{align*}
    where we used the induction assumption \eqref{proof_induction}, linear convergence results \eqref{proof_linear_rate} and the assumption \eqref{proof_assumption} that $c_t \|\theta_{t-1} - \theta^*\|^{-p} \sqrt{d\log(1/\delta)/n} \leq \frac{1}{C_p}$ with sufficiently large $C_p$ to make the last inequality hold.
\end{proof}
            
\end{lemma}

\begin{lemma}\label{lemma_5}
The gap between the population loss and its finite sample version evaluated at the current iterates are bounded above by 
\begin{align*}
    \|\nabla \mathcal{L}_n(\theta_t^n) - \nabla \mathcal{L}^{0}(\theta_t)\| \leq (C_p + C_p c_t) \|\theta_{t-1} - \theta^*\|^{p-1} \sqrt{d\log(1/\delta) / n},
\end{align*}
Moreover, this result implies that
\begin{align*}
 \|\nabla \mathcal{L}_n(\theta_t^n)\| \leq  C_p \|\theta_t - \theta^*\|^{2p-1}.
\end{align*}
\end{lemma}

\begin{proof}
We have that
\begin{align*}
    & \|\nabla \mathcal{L}_n(\theta_t^n) - \nabla \mathcal{L}^{0}(\theta_t^n)\| \leq C_p \|\theta_t^n - \theta^*\|^{p-1} \sqrt{d\log(1/\delta)/n} \\
    & \leq C_p (\|\theta_t^n - \theta_t\| + \|\theta_t - \theta^*\|)^{p-1} \sqrt{d\log(1/\delta)/n} \\
    & \leq C_p (c_t \|\theta_{t-1} - \theta^*\|^{1 - p} \sqrt{d\log(1/\delta)/n} + r_{t - 1}\|\theta_{t - 1} - \theta^*\|)^{p-1} \sqrt{d\log(1/\delta)/n} \\
    & \leq C_p (\|\theta_{t-1} - \theta^*\| + \|\theta_{t - 1} - \theta^*\|)^{p-1} \sqrt{d\log(1/\delta)/n} \leq C_p (\|\theta_{t - 1} - \theta^*\|)^{p-1} \sqrt{d\log(1/\delta)/n},
\end{align*}
where the first inequality is due to \eqref{proof_gradient}, the third inequality is due to the induction hypothesis \eqref{proof_induction} and linear convergence results in \eqref{proof_linear_rate} and the forth inequality is due to assumption \eqref{proof_assumption} and $r_{t - 1} \leq 1$. We also have
\begin{align*}
    & \|\nabla \mathcal{L}^{0}(\theta_t^n) - \nabla \mathcal{L}^{0}(\theta_t)\| \leq C_p\|\theta_t^n - \theta_t\| (\|\theta_t^n - \theta^*\| + \|\theta_t - \theta^*\|)^{2p-2} \\
    & \leq C_p c_t \|\theta_{t-1} - \theta^*\|^{1 - p} \sqrt{d\log(1/\delta)/n} (\|\theta_t^n - \theta_t\| + 2\|\theta_t - \theta^*\|)^{2p-2} \\
    & \leq C_p c_t \|\theta_{t-1} - \theta^*\|^{1 - p} \sqrt{d\log(1/\delta)/n} (c_t \|\theta_{t-1} - \theta^*\|^{1 - p} \sqrt{d\log(1/\delta)/n} \\
    & + 2r_{t - 1}\|\theta_{t - 1} - \theta^*\|)^{2p-2} \\
    & \leq C_p c_t \|\theta_{t-1} - \theta^*\|^{1 - p} \sqrt{d\log(1/\delta)/n} (\|\theta_{t-1} - \theta^*\| + 2\|\theta_{t - 1} - \theta^*\|)^{2p-2} \\
    & \leq C_p c_t \|\theta_{t-1} - \theta^*\|^{p - 1} \sqrt{d\log(1/\delta)/n},
\end{align*}
where the first inequality is due to results in Lemma~\ref{lemma_3}, the second inequality is due to the induction hypothesis \eqref{proof_induction} and $\|\theta_t^n - \theta^*\| \leq \|\theta_t^n - \theta_t\| + \|\theta_t - \theta^*\|$, the third inequality is due to the induction hypothesis \eqref{proof_induction} and the linear convergence results \eqref{proof_linear_rate} and the forth inequality is due to the assumption \eqref{proof_assumption} and $r_{t - 1} \leq 1$. The first claim follows using the fact that
\begin{align*}
    & \|\nabla \mathcal{L}_n(\theta_t^n) - \nabla \mathcal{L}^{0}(\theta_t)\| \leq \|\nabla \mathcal{L}_n(\theta_t^n) - \nabla \mathcal{L}^{0}(\theta_t^n)\| + \|\nabla \mathcal{L}^{0}(\theta_t^n) - \nabla \mathcal{L}^{0}(\theta_t)\| \\
    & \leq (C_p + C_p c_t) \|\theta_{t-1} - \theta^*\|^{p-1} \sqrt{d\log(1/\delta) / n}.
\end{align*}
Given this result, the second claim simply follows from the fact that 
\begin{align*}
    & \|\nabla \mathcal{L}_n(\theta_t^n)\| \leq \|\nabla \mathcal{L}^{0}(\theta_t)\| + \|\nabla \mathcal{L}_n(\theta_t^n) - \nabla \mathcal{L}^{0}(\theta_t)\| \\
    & \leq  C_p \|\theta_t - \theta^*\|^{2p-1} + (C_p + C_p c_t) \|\theta_{t-1} - \theta^*\|^{p-1} \sqrt{d\log(1/\delta) / n} \\
    & \leq C_p \|\theta_t - \theta^*\|^{2p-1} + C_p \|\theta_{t - 1} - \theta^*\|^{2p-1} \\
    & \leq C_p \|\theta_t - \theta^*\|^{2p-1} + \frac{C_p}{r_{t - 1}^{2p - 1}}\|\theta_t - \theta^*\|^{2p-1} \leq C_p \|\theta_t - \theta^*\|^{2p-1} ,
\end{align*}
where we used results from Lemma~\ref{lemma_3}, the linear convergence rate in \eqref{proof_linear_rate} and the assumption \eqref{proof_assumption} that $c_t \|\theta_{t-1} - \theta^*\|^{-p} \sqrt{d\log(1/\delta)/n} \leq \frac{1}{C_p} \leq 1$ again.
\end{proof}

\begin{lemma}\label{lemma_6}
The norm of gradient variation for the finite sample loss $u_{t-1}^n$ and its difference form the gradient variation for the population loss  $u_{t-1}$ are bounded above by
\begin{align*}
    \|u_{t-1}^n - u_{t-1}\|\leq  (C_p + C_p c_t + C_p c_{t-1}) \|\theta_{t-1} - \theta^*\|^{p-1} \sqrt{d\log(1/\delta)/n}.
\end{align*}
Moreover, this result implies that
\begin{align*}
    \|u_{t-1}^n\| \leq C_p \|\theta_{t-1} - \theta^*\|^{2p-1}.
\end{align*}
\end{lemma}

\begin{proof}
The first claim simply follows from the following bounds,
\begin{align*}
    & \|u_{t-1}^n - u_{t-1}\| \leq \|\nabla \mathcal{L}_n(\theta_t^n) - \nabla \mathcal{L}^{0}(\theta_t)\| + \|\nabla \mathcal{L}_n(\theta_{t-1}^n) - \nabla \mathcal{L}^{0}(\theta_{t-1})\|\\
    & \leq (C_p + C_p c_t) \|\theta_{t-1} - \theta^*\|^{p-1} \sqrt{d\log(1/\delta)/n} + (C_p + C_p c_{t-1}) \|\theta_{t-2} - \theta^*\|^{p-1} \sqrt{d\log(1/\delta)/n} \\
    & \leq (C_p + C_p c_t) \|\theta_{t-1} - \theta^*\|^{p-1} \sqrt{d\log(1/\delta)/n} + \frac{C_p + C_p c_{t-1}}{r_{t - 2}^{p - 1}} \|\theta_{t-1} - \theta^*\|^{p-1} \sqrt{d\log(1/\delta)/n} \\
    & \leq (C_p + C_p c_t + C_p c_{t-1}) \|\theta_{t-1} - \theta^*\|^{p-1} \sqrt{d\log(1/\delta)/n}.
\end{align*}
where we used the results in Lemma \ref{lemma_5} and the linear convergence results \eqref{proof_linear_rate} of BFGS on the population loss. The second claim simply follows from,
\begin{align*}
    & \|u_{t-1}^n\| \leq \|u_{t-1}^n - u_{t-1}\| + \|u_{t-1}\| \\
    & \leq \|\nabla \mathcal{L}_n(\theta_t^n) - \nabla \mathcal{L}^{0}(\theta_t)\| + \|\nabla \mathcal{L}_n(\theta_{t-1}^n) - \nabla \mathcal{L}^{0}(\theta_{t-1})\| + \|\nabla \mathcal{L}^{0}(\theta_{t}) - \nabla \mathcal{L}^{0}(\theta_{t - 1})\| \\
    & \leq (C_p + C_p c_t) \|\theta_{t-1} - \theta^*\|^{p-1} \sqrt{d\log(1/\delta)/n} + (C_p + C_p c_{t-1}) \|\theta_{t-2} - \theta^*\|^{p-1} \sqrt{d\log(1/\delta)/n} \\
    & + C_p\|\theta_{t} - \theta_{t - 1}\|(\|\theta_{t} - \theta^*\| + \|\theta_{t - 1} - \theta^*\|)^{2p - 2} \\
    & \leq C_p \|\theta_{t-1} - \theta^*\|^{2p-1} \\
    & + C_p \|\theta_{t-2} - \theta^*\|^{2p-1} + C_p\|\theta_{t} - \theta^* - (\theta_{t - 1} - \theta^*)\|(\|\theta_{t} - \theta^*\| + \|\theta_{t - 1} - \theta^*\|)^{2p - 2} \\
    & \leq C_p \|\theta_{t-1} - \theta^*\|^{2p-1} + \frac{C_p}{r_{t - 2}^{2p - 1}} \|\theta_{t-1} - \theta^*\|^{2p-1} \\
    & + C_p\|(1 - r_{t - 1})(\theta_{t - 1} - \theta^*)\|(r_{t - 1}\|\theta_{t - 1} - \theta^*\| + \|\theta_{t - 1} - \theta^*\|)^{2p - 2} \\
    & = C_p \|\theta_{t-1} - \theta^*\|^{2p-1} + \frac{C_p}{r_{t - 2}^{2p - 1}} \|\theta_{t-1} - \theta^*\|^{2p-1} + C_p (1 - r_{t - 1})(1 + r_{t - 1})^{2p - 2} \|\theta_{t-1} - \theta^*\|^{2p-1} \\
    & \leq C_p \|\theta_{t-1} - \theta^*\|^{2p-1},
\end{align*}
where the third inequality is due to results in Lemma~\ref{lemma_5} and Lemma~\ref{lemma_3}, the forth inequality is due to assumption \eqref{proof_assumption} and the fifth inequality is due to linear convergence results \eqref{proof_linear_rate}.
\end{proof}

\begin{lemma}\label{lemma_7}
We have the following bounds:
\begin{align*}
    |(s_{t-1}^n)^\top \nabla \mathcal{L}_n(\theta_t^n) - s_{t-1}^\top \nabla \mathcal{L}^{0}(\theta_t)| \leq (C_p + C_p c_t + C_p c_{t-1}) \|\theta_{t-1} - \theta^*\|^{p} \sqrt{d\log(1/\delta)/n}.
\end{align*}
\begin{align*}
    |(s_{t-1}^n)^\top (u_{t-1}^n) - s_{t-1}^\top u_{t-1}| \leq (C_p + C_p c_t + C_p c_{t-1}) \|\theta_{t-1} - \theta^*\|^{p} \sqrt{d\log(1/\delta)/n}.
\end{align*}
\end{lemma}

\begin{proof}
The first claim holds since
\begin{align*}
    & |(s_{t-1}^n)^\top \nabla \mathcal{L}_n(\theta_t^n) - s_{t-1}^\top \nabla \mathcal{L}^{0}(\theta_t)| \leq \|s_{t-1}^n - s_{t-1}\|\|\nabla \mathcal{L}_n(\theta_t^n)\| + \|s_{t-1}\|\|\nabla \mathcal{L}_n(\theta_t^n) - \nabla \mathcal{L}^{0}(\theta_t)\|\\
    & \leq (c_{t} + c_{t-1} ) \|\theta_{t-1} - \theta^*\|^{1 - p} \sqrt{d\log(1/\delta)/n} C_p \|\theta_t - \theta^*\|^{2p-1} \\
    & + \|\theta_{t-1} - \theta^*\| (C_p + C_p c_t) \|\theta_{t-1} - \theta^*\|^{p-1} \sqrt{d\log(1/\delta) / n} \\
    & \leq (C_p c_{t} + C_p c_{t-1} ) r_{t - 1}^{2p - 1}\|\theta_{t-1} - \theta^*\|^{p} \sqrt{d\log(1/\delta)/n} + (C_p + C_p c_t) \|\theta_{t-1} - \theta^*\|^{p} \sqrt{d\log(1/\delta) / n} \\
    & \leq (C_p c_{t} + C_p c_{t-1})\|\theta_{t-1} - \theta^*\|^{p} \sqrt{d\log(1/\delta)/n} + (C_p + C_p c_t) \|\theta_{t-1} - \theta^*\|^{p} \sqrt{d\log(1/\delta) / n} \\
    & \leq (C_p + C_p c_t + C_p c_{t-1}) \|\theta_{t-1} - \theta^*\|^{p} \sqrt{d\log(1/\delta)/n},
\end{align*}
where the second inequality is due to results in Lemma~\ref{lemma_4} and Lemma~\ref{lemma_5}, the third inequality is due to linear convergence rates \eqref{proof_linear_rate} and the forth inequality is due to $r_{t - 1} \leq 1$. The second claim holds since
\begin{align*}
    & |(s_{t-1}^n)^\top (u_{t-1}^n) - s_{t-1}^\top u_{t-1}| \leq \|s_{t-1}^n - s_{t-1}\|\|u_{t-1}^n\| + \|s_{t-1}\|\|u_{t-1}^n - u_{t-1}\| \\
    & \leq (c_{t} + c_{t-1} ) \|\theta_{t-1} - \theta^*\|^{1 - p} \sqrt{d\log(1/\delta)/n} C_p \|\theta_{t-1} - \theta^*\|^{2p-1} \\
    & + \|\theta_{t} - \theta_{t - 1}\|(C_p + C_p c_t + C_p c_{t-1}) \|\theta_{t-1} - \theta^*\|^{p-1} \sqrt{d\log(1/\delta)/n} \\
    & = (C_p c_{t} + C_p c_{t-1})\|\theta_{t-1} - \theta^*\|^{p} \sqrt{d\log(1/\delta)/n} \\
    & + \|\theta_t - \theta^* - \theta_{t - 1} + \theta^*\|(C_p + C_p c_t + C_p c_{t-1}) \|\theta_{t-1} - \theta^*\|^{p-1} \sqrt{d\log(1/\delta)/n} \\
    & \leq (C_p c_{t} + C_p c_{t-1})\|\theta_{t-1} - \theta^*\|^{p} \sqrt{d\log(1/\delta)/n} \\
    & + (1 - r_{t - 1})\|\theta_{t - 1} - \theta^*\|(C_p + C_p c_t + C_p c_{t-1}) \|\theta_{t-1} - \theta^*\|^{p-1} \sqrt{d\log(1/\delta)/n} \\
    & \leq (C_p c_{t} + C_p c_{t-1})\|\theta_{t-1} - \theta^*\|^{p} \sqrt{d\log(1/\delta)/n} \\
    & + (C_p + C_p c_t + C_p c_{t-1}) \|\theta_{t-1} - \theta^*\|^{p} \sqrt{d\log(1/\delta)/n} \\
    & \leq (C_p + C_p c_t + C_p c_{t-1}) \|\theta_{t-1} - \theta^*\|^{p} \sqrt{d\log(1/\delta)/n},
\end{align*}
where the second inequality is due to results in Lemma~\ref{lemma_4} and Lemma~\ref{lemma_6}, the third inequality is due to linear convergence rates \eqref{proof_linear_rate}.
\end{proof}

\begin{lemma}\label{lemma_8}
The following bounds hold:
\begin{align*}
    |s_{t-1}^\top \nabla \mathcal{L}^{0}(\theta_t)| \leq C_p \|\theta_t - \theta^*\|^{2p}, \quad  |s_{t-1}^\top u_{t-1}| \geq C_p \|\theta_t - \theta^*\|^{2p}, \quad |(s_{t-1}^n)^\top u_{t-1}^n| \geq C_p \|\theta_{t} - \theta^*\|^{2p}.
\end{align*}
\end{lemma}

\begin{proof}
The first claim holds since
\begin{align*}
    & |s_{t-1}^\top \nabla \mathcal{L}^{0}(\theta_t)| = |(\theta_t - \theta_{t - 1})^\top \nabla \mathcal{L}^{0}(\theta_t)| = |(\theta_t - \theta^* - \theta_{t - 1} + \theta^*)^\top \nabla \mathcal{L}^{0}(\theta_t)| \\
    & = |(1 - \frac{1}{r_{t - 1}})(\theta_{t} - \theta^*)^\top \nabla \mathcal{L}^{0}(\theta_t)| \leq |1 - \frac{1}{r_{t - 1}}|\|\theta_{t} - \theta^*\| \|\nabla \mathcal{L}^{0}(\theta_t)\| \\
    & \leq |1 - \frac{1}{r_{h}}|\|\theta_{t} - \theta^*\| C_p \|\theta_t - \theta^*\|^{2p - 1} \leq C_p \|\theta_t - \theta^*\|^{2p},
\end{align*}
where we use the linear convergence results \eqref{proof_linear_rate} and the results in Lemma~\ref{lemma_3}. The second claim holds since
\begin{align*}
    & |s_{t-1}^\top u_{t-1}| = |(\theta_t - \theta_{t - 1})^\top (\nabla \mathcal{L}^{0}(\theta_t) - \nabla \mathcal{L}^{0}(\theta_{t - 1}))| \\
    & = |(\theta_t - \theta^* - \theta_{t - 1} + \theta^*)^\top \nabla^2 \mathcal{L}^{0}(\tau \theta_t + (1 - \tau)\theta_{t - 1})(\theta_{t} - \theta_{t - 1})| \\
    & = |(1 - \frac{1}{r_{t - 1}})(\theta_{t} - \theta^*)^\top \nabla^2 \mathcal{L}^{0}(\tau \theta_t + (1 - \tau)\theta_{t - 1})(\theta_t - \theta^* - \theta_{t - 1} + \theta^*)| \\
    & = |(1 - \frac{1}{r_{t - 1}})^2(\theta_{t} - \theta^*)^\top \nabla^2 \mathcal{L}^{0}(\tau \theta_t + (1 - \tau)\theta_{t - 1})(\theta_t - \theta^*)| \\
    & \geq (1 - \frac{1}{r_{t - 1}})^2\|\theta_{t} - \theta^*\|^2 \lambda_{min}(\nabla^2 \mathcal{L}^{0}(\tau \theta_t + (1 - \tau)\theta_{t - 1})) \\
    & \geq (1 - \frac{1}{r_{l}})^2 \|\theta_{t} - \theta^*\|^2 C_p \|\tau \theta_t + (1 - \tau)\theta_{t - 1} - \theta^*\|^{2p - 2} \\
    & \geq C_p \|\theta_{t} - \theta^*\|^2 \|\tau (\theta_t - \theta^*) + (1 - \tau)(\theta_{t - 1} - \theta^*)\|^{2p - 2} \\
    & \geq C_p \|\theta_{t} - \theta^*\|^2 \|(\tau + \frac{1 - \tau}{r_{t - 1}}) (\theta_t - \theta^*)\|^{2p - 2} \geq C_p (\tau + \frac{1 - \tau}{r_{h}})^{2p - 2} \|\theta_t - \theta^*\|^{2p} \geq C_p \|\theta_t - \theta^*\|^{2p},
\end{align*}
where $\tau \in [0, 1]$ and we use the Taylor's Theorem, the linear convergence rates \eqref{proof_linear_rate} and results in Lemma~\ref{lemma_3}. The last cliam holds since
\begin{align*}
    & |(s_{t-1}^n)^\top u_{t-1}^n| = |(s_{t-1}^n)^\top u_{t-1}^n - s_{t-1}^\top u_{t-1} + s_{t-1}^\top u_{t-1}| \geq |s_{t-1}^\top u_{t-1}| - |(s_{t-1}^n)^\top u_{t-1}^n - s_{t-1}^\top u_{t-1}| \\
    & \geq C_p \|\theta_t - \theta^*\|^{2p} - (C_p + C_p c_t + C_p c_{t-1}) \|\theta_{t-1} - \theta^*\|^{p} \sqrt{d\log(1/\delta)/n} \\
    & \geq C_p \|\theta_{t} - \theta^*\|^{2p} - (C_p + C_p c_t) \frac{1}{r_{t - 1}^{p}}\|\theta_{t} - \theta^*\|^{p}\sqrt{d\log(1/\delta)/n} \\
    & - C_p c_{t-1} \|\theta_{t-1} - \theta^*\|^{p} \sqrt{d\log(1/\delta)/n} \\
    & \geq C_p \|\theta_{t} - \theta^*\|^{2p} - \|\theta_{t} - \theta^*\|^{2p} - \|\theta_{t} - \theta^*\|^{2p} \geq C_p \|\theta_{t} - \theta^*\|^{2p},
\end{align*}
where we use the second claim, the results from Lemma~\ref{lemma_7} and the assumption \eqref{proof_assumption}.
\end{proof}

\begin{lemma}\label{lemma_9}
We have the following bounds:
\begin{align*}
    |\frac{s_{t-1}^\top \nabla \mathcal{L}^{0}(\theta_t)}{s_{t-1}^\top u_{t-1}}| \leq C_p,
\end{align*}
\begin{align*}
    \left|\frac{(s_{t-1}^n)^\top \nabla \mathcal{L}_n(\theta_t^n)}{(s_{t-1}^n)^\top u_{t-1}^n} - \frac{s_{t-1}^\top \nabla \mathcal{L}^{0}(\theta_t)}{s_{t-1}^\top u_{t-1}}\right| \leq (C_p + C_p c_t + C_p c_{t-1}) \|\theta_{t-1} - \theta^*\|^{-p} \sqrt{d\log(1/\delta)/n}.
\end{align*}
\end{lemma}

\begin{proof}
The first claim holds since
\begin{align*}
    |\frac{s_{t-1}^\top \nabla \mathcal{L}^{0}(\theta_t)}{s_{t-1}^\top u_{t-1}}| = \frac{|s_{t-1}^\top \nabla \mathcal{L}^{0}(\theta_t)|}{|s_{t-1}^\top u_{t-1}|} \leq \frac{C_p \|\theta_{t-1} - \theta^*\|^{2p}}{C_p \|\theta_{t-1} - \theta^*\|^{2p}} \leq C_p,
\end{align*}
where we use the results in Lemma~\ref{lemma_8}. The second claim holds since
\begin{align*}
    & \left|\frac{(s_{t-1}^n)^\top \nabla \mathcal{L}_n(\theta_t^n)}{(s_{t-1}^n)^\top u_{t-1}^n} - \frac{s_{t-1}^\top \nabla \mathcal{L}^{0}(\theta_t)}{s_{t-1}^\top u_{t-1}}\right|\\
    & \leq \frac{|(s_{t-1}^n)^\top \nabla \mathcal{L}_n(\theta_t^n) - s_{t-1}^\top \nabla \mathcal{L}^{0}(\theta_t)||s_{t-1}^\top u_{t-1}| + |s_{t-1}^\top \nabla \mathcal{L}^{0}(\theta_t)| |s_{t-1}^\top u_{t-1} - (s_{t-1}^n)^\top u_{t-1}^n|}{|(s_{t-1}^n)^\top u_{t-1}^n  s_{t-1}^\top u_{t-1}|}\\
    & \leq \frac{\left|(s_{t-1}^n)^\top \nabla \mathcal{L}_n(\theta_t^n) - s_{t-1}^\top \nabla \mathcal{L}^{0}(\theta_t)\right|}{|(s_{t-1}^n)^\top u_{t-1}^n|} + \frac{|s_{t-1}^\top \nabla \mathcal{L}^{0}(\theta_t)|}{|s_{t-1}^\top u_{t-1}|} \frac{|s_{t-1}^\top u_{t-1} - (s_{t-1}^n)^\top u_{t-1}^n|}{|(s_{t-1}^n)^\top u_{t-1}^n|}\\
    & \leq \frac{(C_p + C_p c_t + C_p c_{t-1}) \|\theta_{t-1} - \theta^*\|^{p} \sqrt{d\log(1/\delta)/n}}{C_p \|\theta_{t} - \theta^*\|^{2p}} \\
    & + C_p\frac{(C_p + C_p c_t + C_p c_{t-1}) \|\theta_{t-1} - \theta^*\|^{p} \sqrt{d\log(1/\delta)/n}}{C_p \|\theta_{t-1} - \theta^*\|^{2p}}\\
    & \leq \frac{C_p + C_p c_t + C_p c_{t-1}}{r_{t - 1}^{2p}} \|\theta_{t-1} - \theta^*\|^{-p} \sqrt{d\log(1/\delta)/n} \\
    & + (C_p + C_p c_t + C_p c_{t-1}) \|\theta_{t-1} - \theta^*\|^{-p} \sqrt{d\log(1/\delta)/n} \\
    & \leq (C_p + C_p c_t + C_p c_{t-1}) \|\theta_{t-1} - \theta^*\|^{-p} \sqrt{d\log(1/\delta)/n},
\end{align*}
where we use the results from Lemma~\ref{lemma_7} and Lemma~\ref{lemma_8}.
\end{proof}

Finally, we present an upper bound for the norm of the inverse Hessian approximation matrix $H_t$.  

\begin{lemma}\label{lemma_10}
The norm of the inverse Hessian approximation matrix is upper bounded by
\begin{align*}
    \|H_{t - 1}^n\| \leq C_p \|\theta_{t-1} - \theta^*\|^{2-2p}.
\end{align*} 
\end{lemma}

\begin{proof}
Recall the update of $H_{t}^n$,
\begin{align*}
    H_t^n = \left(I - \frac{s_{t-1}^n (u_{t-1}^n)^\top}{(u_{t-1}^n)^\top s_{t-1}^n}\right)H_{t-1}^n \left(I - \frac{u_{t-1}^n (s_{t-1}^n)^\top}{(s_{t-1}^n)^\top u_{t-1}^n}\right) + \frac{s_{t-1}^n (s_{t-1}^n)^\top}{(s_{t-1}^n)^\top u_{t-1}^n}.
\end{align*}
With the property of $\|I - \frac{s_{t-1}^n (u_{t-1}^n)^\top}{(u_{t-1}^n)^\top s_{t-1}^n}\| \leq 1$ and $\|I - \frac{u_{t-1}^n (s_{t-1}^n)^\top}{(s_{t-1}^n)^\top u_{t-1}^n}\| \leq 1$, we have that
\begin{align*}
& \|H_{t}^n\| \leq \|I - \frac{s_{t-1}^n (u_{t-1}^n)^\top}{(u_{t-1}^n)^\top s_{t-1}^n}\| \|H_{t-1}^n\| \|I - \frac{u_{t-1}^n (s_{t-1}^n)^\top}{(s_{t-1}^n)^\top u_{t-1}^n}\| + \|\frac{s_{t-1}^n (s_{t-1}^n)^\top}{(s_{t-1}^n)^\top u_{t-1}^n}\|\\
& \leq \|H_{t-1}^n\| + \frac{\|s_{t-1}^n\|^2}{(s_{t-1}^n)^\top u_{t-1}^n} \leq  \left\|\left(\nabla^2 \mathcal{L}_n(\theta_0)\right)^{-1}\right\| + \sum_{i=0}^{t-1} \frac{\|s_{i}^n\|^2}{(s_{i}^n)^\top (u_{i}^n)}.
\end{align*}
From results in Lemma~\ref{lemma_4} and Lemma~\ref{lemma_8}, we know that for any $0 \leq i \leq t - 1$,
\begin{align*}
    \frac{\|s_{i}^n\|^2}{(s_{i}^n)^\top (u_{i}^n)} \leq \frac{\|\theta_{i} - \theta^*\|^2}{C_p\|\theta_{i} - \theta^*\|^{2p}} \leq C_p \|\theta_{i} - \theta^*\|^{2-2p}.
\end{align*}
Hence, using linear convergence results in \eqref{proof_linear_rate}, we have that for all $0 \leq i \leq t - 1$,
\begin{align*}
    \|\theta_{t-1} - \theta^*\| = \prod_{j = i}^{t - 2} r_j \|\theta_{i} - \theta^*\| \leq r_h^{t - 1 - i}\|\theta_{i} - \theta^*\|,
\end{align*}
\begin{align*}
    \|\theta_{i} - \theta^*\| \geq r_h^{i + 1 - t}\|\theta_{t-1} - \theta^*\|, \qquad \|\theta_{i} - \theta^*\|^{2 - 2p} \leq r_h^{(2p - 2)(t - 1 - i)}\|\theta_{t-1} - \theta^*\|^{2 - 2p},
\end{align*}
\begin{align*}
    \sum_{i=0}^{t-1} \|\theta_{i} - \theta^*\|^{2 - 2p} \leq \sum_{i=0}^{t-1} r_h^{(2p - 2)(t - 1 - i)}\|\theta_{t-1} - \theta^*\|^{2 - 2p} \leq \frac{1}{1 - r_h^{2p - 2}}\|\theta_{t-1} - \theta^*\|^{2 - 2p}.
\end{align*}
Therefore, we obtain that
\begin{align*}
    & \|H_{t}^n\| \leq \left\|\left(\nabla^2 \mathcal{L}_n(\theta_0)\right)^{-1}\right\| + \sum_{i=0}^{t-1} \frac{\|s_{i}^n\|^2}{(s_{i}^n)^\top (u_{i}^n)} \leq \frac{1}{\lambda_{\min}(\nabla^2 \mathcal{L}_n(\theta_0))} + \sum_{i=0}^{t-1} C_p \|\theta_{i} - \theta^*\|^{2-2p}\\
    & \leq \frac{1}{C_p  \|\theta_0 - \theta^*\|^{2p-2}} + C_p \frac{1}{1 - r_h^{2p - 2}}\|\theta_{t-1} - \theta^*\|^{2 - 2p} \\
    & \leq \left(\frac{r_h^{(2p - 2)(t - 1)}}{C_p } + \frac{C_p}{1 - r_h^{2p - 2}}\right)\|\theta_{t-1} - \theta^*\|^{2 - 2p} \\
    & \leq C_p \|\theta_{t-1} - \theta^*\|^{2-2p}.
\end{align*}
Hence, we have that
\begin{align*}
    & \|H_{t - 1}^n\| \leq C_p \|\theta_{t-2} - \theta^*\|^{2-2p} \leq C_p \frac{1}{r_{t - 2}^{2 - 2p}}\|\theta_{t-1} - \theta^*\|^{2-2p} \\
    & \leq C_p r_h^{2p - 2}\|\theta_{t-1} - \theta^*\|^{2-2p} \leq C_p \|\theta_{t-1} - \theta^*\|^{2-2p}.
\end{align*}    
\end{proof}
With all the above lemmas, we can complete the induction hypothesis. Applying results in Lemma~\ref{lemma_4}, \ref{lemma_5}, \ref{lemma_6}, \ref{lemma_9}, and \ref{lemma_10} into \eqref{proof_induction_6}, we have that
\begin{align*}
    &\|H_{t}^n \nabla \mathcal{L}_n(\theta_t^n) - H_t \nabla \mathcal{L}^0 (\theta_t)\| \\
    & \leq \|H_{t-1}^n\| \|\nabla \mathcal{L}_n(\theta_t^n) - \nabla \mathcal{L}^{0}(\theta_t)\| \\
    & + (\|H_{t-1}^n\| \|u_{t-1}^n\| + \|s_{t-1}^n\|) \left|\frac{(s_{t-1}^n)^\top \nabla \mathcal{L}_n(\theta_t^n)}{(s_{t-1}^n)^\top u_{t-1}^n} - \frac{s_{t-1}^\top \nabla \mathcal{L}^{0}(\theta_t)}{s_{t-1}^\top u_{t-1}}\right| \\
    & + (\|H_{t-1}^n\|\|u_{t-1}^n - u_{t-1}\| + \|s_{t-1} - s_{t-1}^n\|) |\frac{s_{t-1}^\top \nabla \mathcal{L}^{0}(\theta_t)}{s_{t-1}^\top u_{t-1}}| \\
    & \leq C_p \|\theta_{t-1} - \theta^*\|^{2-2p} (C_p + C_p c_t) \|\theta_{t-1} - \theta^*\|^{p-1} \sqrt{d\log(1/\delta)/n} \\
    & + (C_p \|\theta_{t-1} - \theta^*\|^{2-2p} C_p \|\theta_{t-1} - \theta^*\|^{2p-1} \\
    & + \|\theta_{t-1} - \theta^*\|)(C_p + C_p c_t + C_p c_{t-1}) \|\theta_{t-1} - \theta^*\|^{-p} \sqrt{d\log(1/\delta)/n} \\
    & + (C_p \|\theta_{t-1} - \theta^*\|^{2-2p}(C_p + C_p c_t + C_p c_{t-1}) \|\theta_{t-1} - \theta^*\|^{p-1} \sqrt{d\log(1/\delta)/n} \\
    & + (c_{t} + c_{t-1}) \|\theta_{t-1} - \theta^*\|^{1 - p} \sqrt{d\log(1/\delta)/n})C_p \\
    & \leq (C_p + C_p c_t) \|\theta_{t-1} - \theta^*\|^{1 - p} \sqrt{d\log(1/\delta)/n} \\
    & + (C_p + C_p c_t + C_p c_{t-1}) \|\theta_{t-1} - \theta^*\|^{1 - p} \sqrt{d\log(1/\delta)/n} \\
    & + (C_p + C_p c_t + C_p c_{t-1}) \|\theta_{t-1} - \theta^*\|^{1 - p} \sqrt{d\log(1/\delta)/n} \\
    & \leq (C_p + C_p c_t + C_p c_{t-1}) \|\theta_{t-1} - \theta^*\|^{1 - p} \sqrt{d\log(1/\delta)/n}.
\end{align*}
Hence, we prove that
\begin{align}\label{proof_induction_7}
    \|H_t^n \nabla \mathcal{L}_{n}(\theta_t^n) - H_t \nabla \mathcal{L}^{0}(\theta_t)\| \leq (C_p + C_p c_t + C_p c_{t-1}) \|\theta_{t-1} - \theta^*\|^{1-p} \sqrt{d\log(1/\delta)/n}.
\end{align}
Notice that by induction and linear convergence rates \eqref{proof_linear_rate}, we observe that
\begin{align}\label{proof_induction_8}
\begin{split}
    & \|\theta_{t}^n - \theta_{t}\| \leq  c_t \|\theta_{t - 1} - \theta^*\|^{1-p} \sqrt{d \log(1/\delta) / n} \leq c_t \frac{1}{r_{t - 1}^{1 - p}}\|\theta_{t} - \theta^*\|^{1-p} \sqrt{d \log(1/\delta) / n}\\
    & \leq c_t r_{t - 1}^{p - 1}\|\theta_{t} - \theta^*\|^{1-p} \sqrt{d \log(1/\delta) / n} \leq c_t \|\theta_{t} - \theta^*\|^{1-p}\sqrt{d\log(1/\delta)/n}.
\end{split}
\end{align}
Therefore, leveraging \eqref{proof_induction_1}, \eqref{proof_induction_7}, and \eqref{proof_induction_8}, we have that
\begin{align*}
\begin{split}
    & \|\theta_{t+1}^n - \theta_{t+1}\| \leq \|\theta_{t}^n - \theta_{t}\| + \|H_{t}^n \nabla \mathcal{L}_n(\theta_t^n) - H_t \nabla \mathcal{L}^{0}(\theta_t)\| \\
    & \leq c_t \|\theta_{t} - \theta^*\|^{1-p}\sqrt{d\log(1/\delta)/n} + (C_p + C_p c_t + C_p c_{t-1}) \|\theta_{t} - \theta^*\|^{1-p}\sqrt{d\log(1/\delta)/n} \\
    & \leq (C_p + C_p c_t + C_p c_{t-1}) \|\theta_{t} - \theta^*\|^{1-p}\sqrt{d\log(1/\delta)/n}.
\end{split}
\end{align*}
We define that
\begin{align*}
    c_{t+1} = C_p + C_p c_t + C_p c_{t-1}.
\end{align*}
Then, we have that
\begin{align*}
    \|\theta_{t+1}^n - \theta_{t+1}\| \leq c_{t + 1} \|\theta_{t} - \theta^*\|^{1-p}\sqrt{d\log(1/\delta)/n}.
\end{align*}
With the standard recursion, we know that $c_{t} \leq \left(C_p\right)^t$ for $C_p$ large enough. Hence, \eqref{proof_induction} holds for $t + 1$.

\subsubsection{Final conclusion}

Therefore, using induction we proved that \eqref{proof_induction} holds for all $t \geq 1$:
\begin{align*}
    \|\theta_t^n - \theta_t\| \leq c_t \|\theta_{t - 1} - \theta^*\|^{1-p} \sqrt{d \log(1/\delta) / n},
\end{align*}
where $c_t = \Theta(C_p^{t}) = \Theta(\exp(t))$. Notice that
\begin{align*}
    \|\theta_t^n - \theta^*\| \leq \|\theta_t^n - \theta_t\| + \|\theta_t - \theta^*\|
    \leq & C_p^t \|\theta_t - \theta^*\|^{1-p} \sqrt{d\log(1/\delta)/n} + \|\theta_t - \theta^*\|.
\end{align*}
The optimal $T$ with minimum $\|\theta_T^n - \theta^*\|$ should satisfy that
\begin{align*}
    C_p^T \|\theta_T - \theta^*\|^{-p} \sqrt{d \log (1/\delta)/n} = C_p,
\end{align*}
for which we obtain $T = \frac{C\log(n/d\log(1/\delta))}{2(p + 1)}$ for some constant $C$ that is independent of $d$ and $n$. Therefore, we prove the final conclusion that 
\begin{align*}
    \|\theta_T^n - \theta^*\| \leq C' (d \log(1/\delta)/n)^{1/(2p+2)},
\end{align*}
where $C'$ is a constant that is independent of $d$ and $n$.

\section{Additional experiments for the  medium SNR regime }\label{sec:additional_experiment}

Here we briefly illustrate the behavior of BFGS in Medium SNR regime. We consider the generalized linear model with $d=50, 100, 500$ and $p=2$. The inputs are still generated by $\{X_i\}_{i=1}^n$, but $\theta^*$ now is uniformly sampled from the sphere with radius $n^{-1/6}$.

The results are shown in Figure~\ref{fig:empirical_opt_high_d_middle_snr}. We can see BFGS still converges fast, and the statistical radius of middle SNR regime lies between the High SNR and Low SNR. A rigorous characterization of the statistical radius of middle SNR regime will be left as future work.

\begin{figure*}[ht!]
\centering
\begin{subfigure}{0.24\textwidth}
    \includegraphics[width=\textwidth]{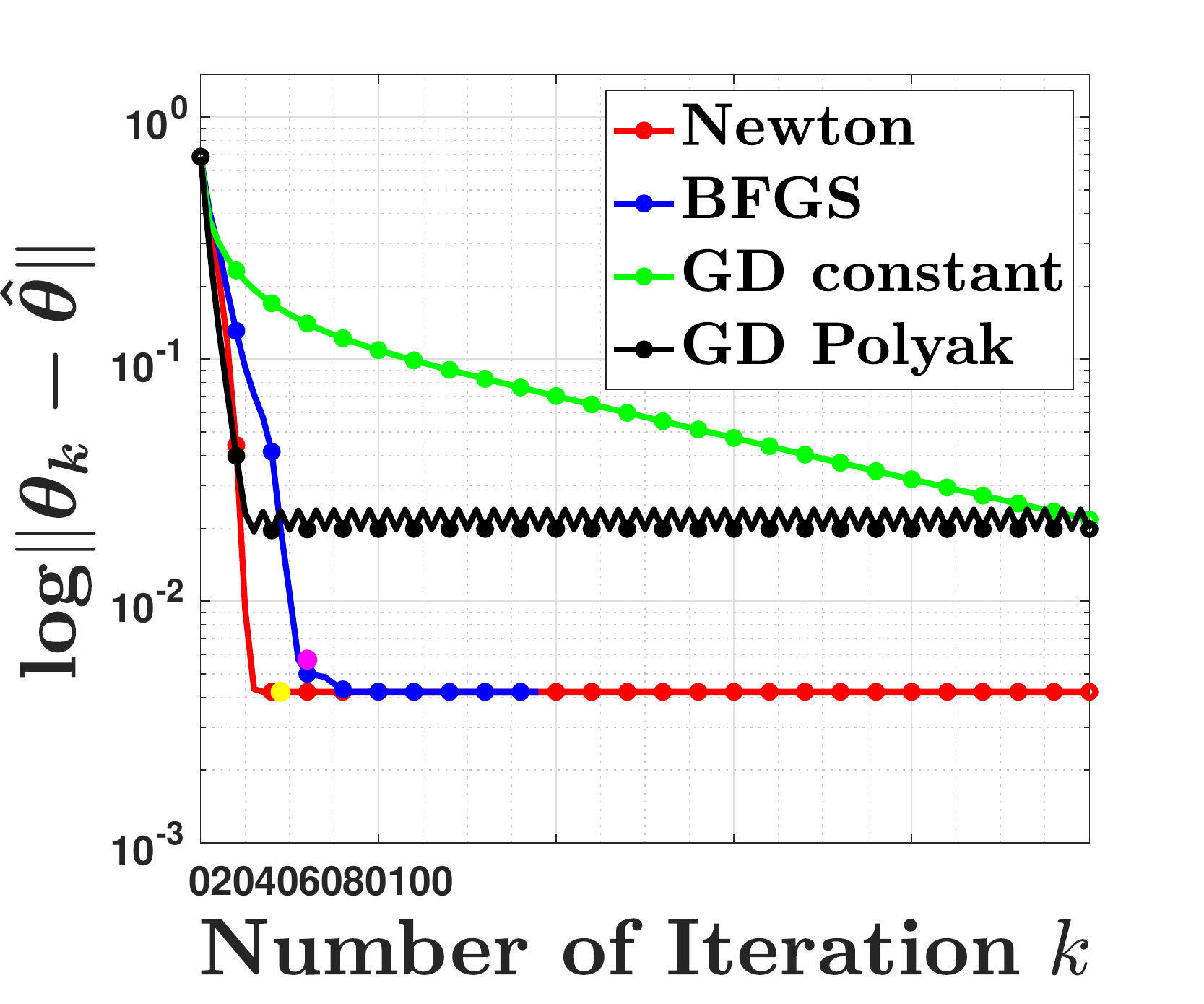}
    \caption{$d = 50$.}
\end{subfigure}
\hfill
\begin{subfigure}{0.24\textwidth}
    \includegraphics[width=\textwidth]{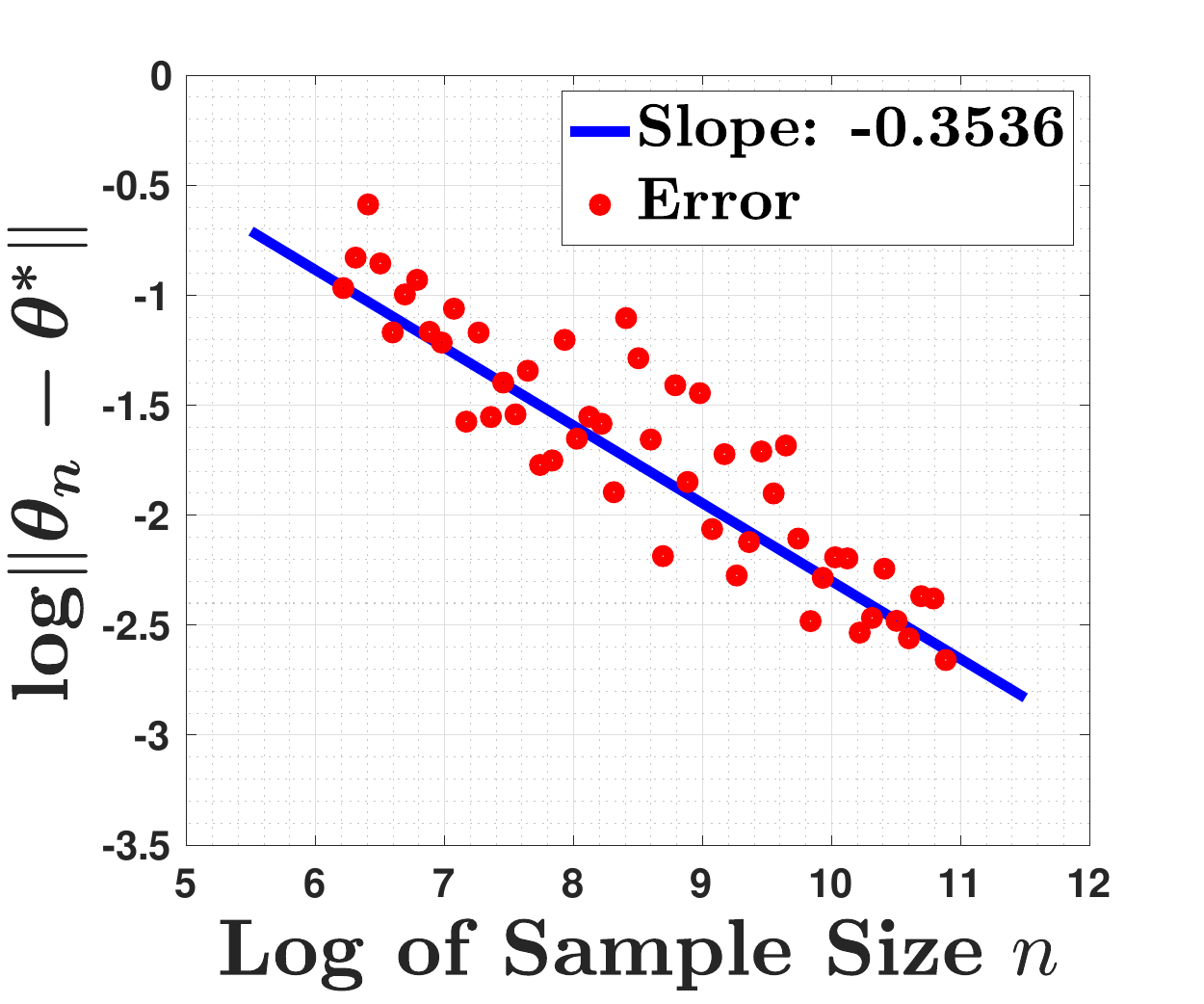}
    \caption{$d = 50$.}
\end{subfigure}
\hfill
\begin{subfigure}{0.24\textwidth}
    \includegraphics[width=\textwidth]{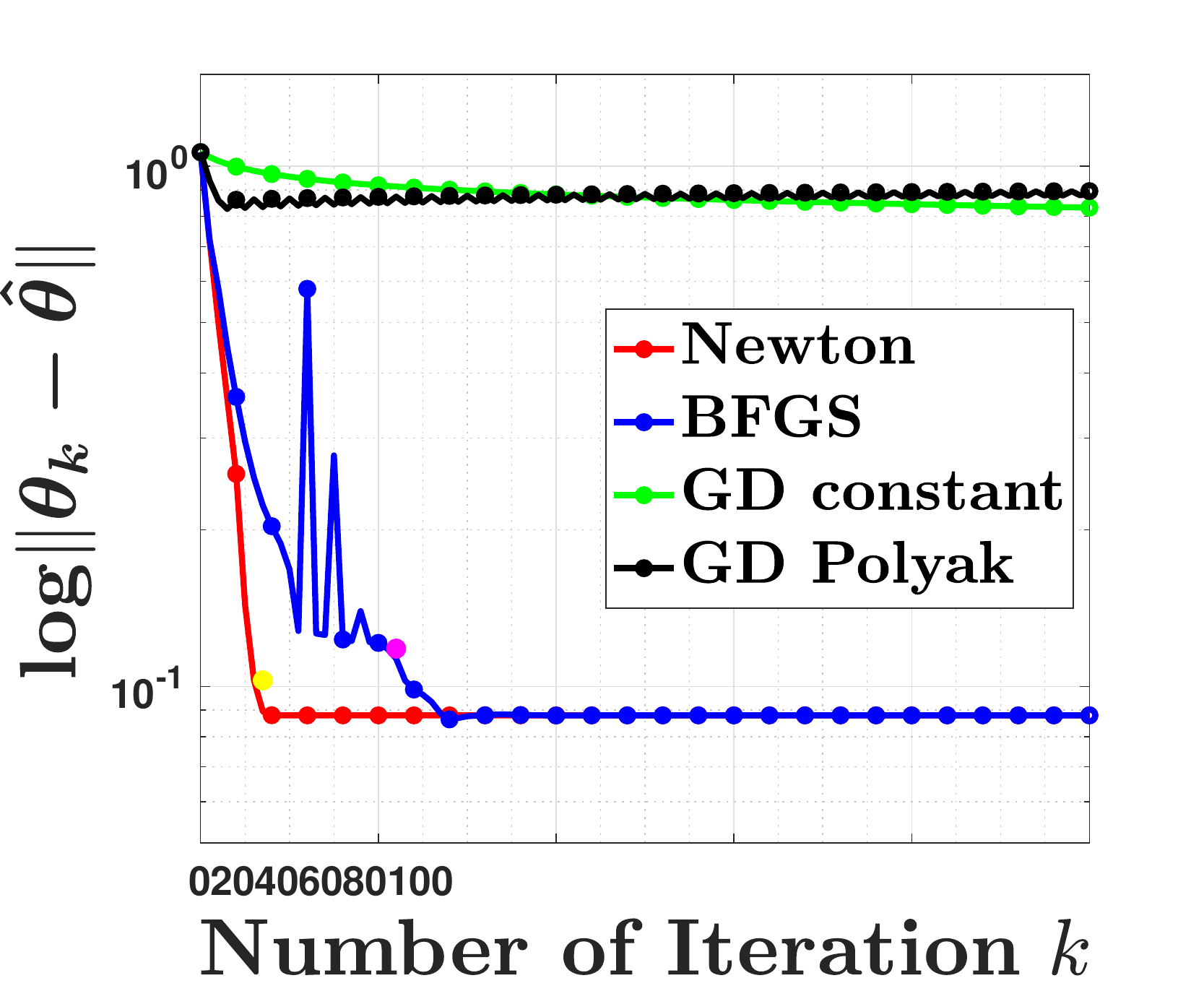}
    \caption{$d = 100$.}
\end{subfigure}
\begin{subfigure}{0.24\textwidth}
    \includegraphics[width=\textwidth]{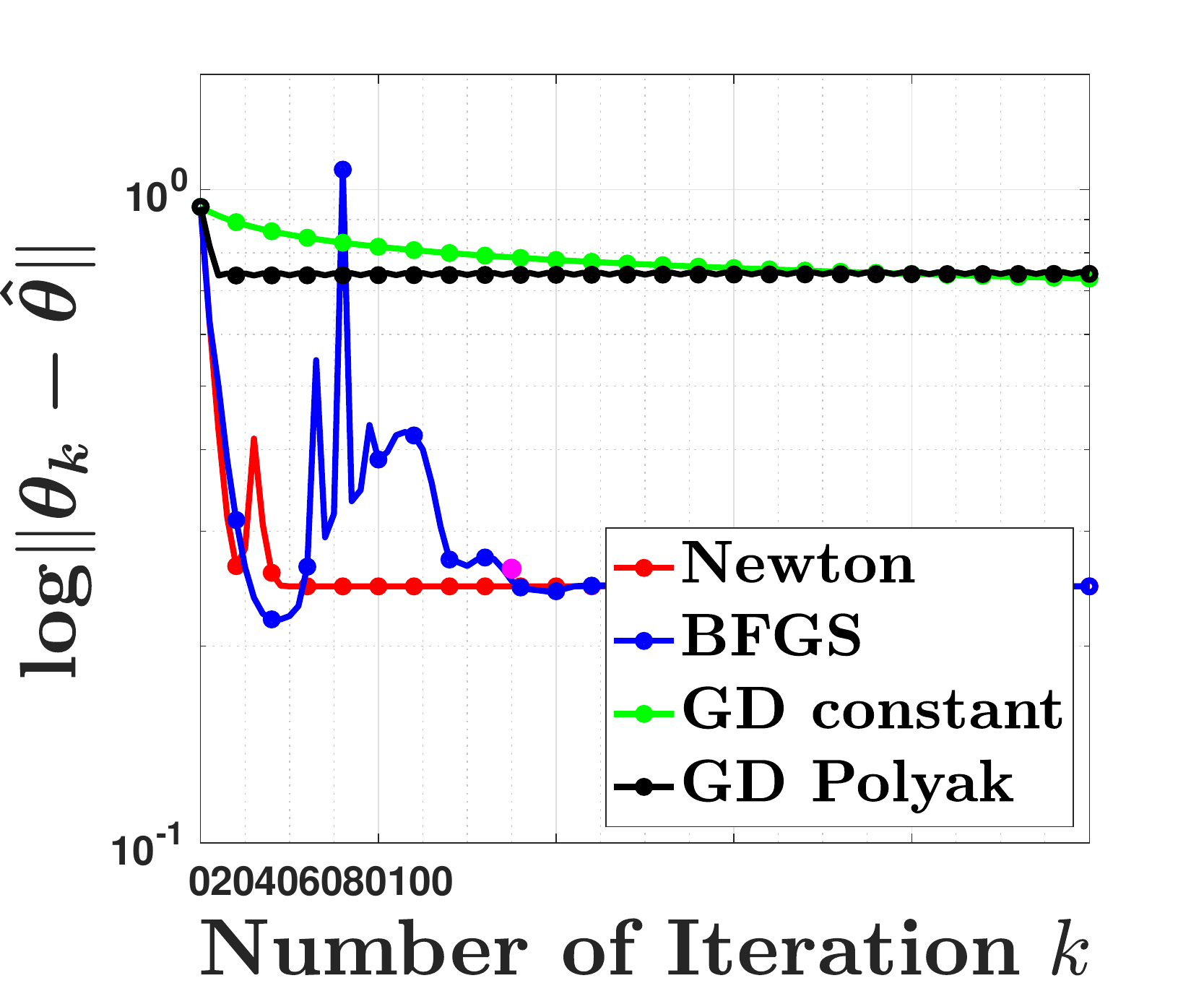}
    \caption{$d = 500$.}
\end{subfigure}
\caption{Convergence results and statistical results for medium SNR regime with $d = 50$ are shown in (a) and (b). Convergence of different methods with $d = 100$ and $d = 500$ for medium SNR regime are shown in (c) and (d).}
\label{fig:empirical_opt_high_d_middle_snr}
\end{figure*}

\end{document}